
\documentclass[reqno,11pt]{amsart}

\usepackage[top=2.0cm,bottom=2.0cm,left=3cm,right=3cm]{geometry}
\usepackage{amsthm,amsmath,amssymb,dsfont}
\usepackage{mathrsfs,amsfonts,functan,extarrows,mathtools}
\usepackage[colorlinks]{hyperref}

\usepackage{marginnote}
\usepackage{xcolor}
\usepackage{stmaryrd}
\usepackage{esint}
\usepackage{graphicx}
\usepackage{bm}

\usepackage{amsmath,latexsym,amsfonts,indentfirst,amsthm,amsxtra,amssymb,bm}

\usepackage{fancyhdr}

\usepackage{epstopdf}


\newtheorem{theorem}{Theorem}[section]

\newtheorem{remark}{Remark}[section]
\newtheorem{lemma}{Lemma}[section]

\numberwithin{equation}{section}
\allowdisplaybreaks

\arraycolsep=1.5pt



\def\tr{\mathrm{tr}}

\def\no{\nonumber}
\def\R{\mathbb{R}}
\def\T{\mathbb{T}}

\def\eps{\varepsilon}
\def\div{\mathrm{div}\,}

\def\J{\mathcal{J}}


\newcounter{wronumber}\setcounter{wronumber}{1}




\begin{document}
\title[Inertial Qian-Sheng model]
			{Entropy inequality and energy dissipation of inertial Qian-Sheng model for nematic liquid crystals}

\author[Ning Jiang]{Ning Jiang}
\address[Ning Jiang]{\newline School of Mathematics and Statistics, Wuhan University, Wuhan, 430072, P. R. China}
\email{njiang@whu.edu.cn}

\author[Yi-Long Luo]{Yi-Long Luo}
\address[Yi-Long Luo]
{\newline School of Mathematics, South China University of Technology, Guangzhou, 510641, P. R. China}
\email{luoylmath@scut.edu.cn}

\author[Yangjun Ma]{Yangjun Ma}
\address[Yangjun Ma]
        {\newline School of Mathematics and Statistics, Chongqing Jiaotong University,
        	Chongqing, 400074, P. R. China}
\email{yangjunma@whu.edu.cn}

\author[Shaojun Tang]{Shaojun Tang}
\address[Shaojun Tang]
        {\newline Department of Mathematics
        	Wuhan University of Technology,Wuhan, 430070, P. R. China}
\email{sjtang@ustc.edu.cn}

\thanks{\today}

\maketitle

\begin{abstract}
	For the inertial Qian-Sheng model of nematic liquid crystals in the $Q$-tensor framework, we illustrate the roles played by the entropy inequality and energy dissipation in the well-posedness of smooth solutions when we employ energy method. We first derive the coefficients requirements from the entropy inequality, and point out the entropy inequality is insufficient to guarantee energy dissipation. We then introduce a novel Condition (H) which ensures the energy dissipation. We prove that when both the entropy inequality and Condition (H) are obeyed, the local in time smooth solutions exist for large initial data. Otherwise, we can only obtain small data local solutions. Furthermore, to extend the solutions globally in time and obtain the decay of solutions, we require at least one of the two conditions: entropy inequality, or $\tilde{\mu}_2= \mu_2$, which significantly enlarge the range of the coefficients in previous works.\\
	
	\noindent\textsc{Keywords.} Incompressible inertial Qian-Sheng model; Entropy inequality; Energy dissipation; Well-posedness \\
	
	\noindent\textsc{AMS subject classifications.} 35B30, 35Q30, 35Q35, 76A15
\end{abstract}





\section{Introduction}
\subsection{Inertial Qian-Sheng model}

We consider the hydrodynamics of nematic liquid crystal model in incompressible flow in the $Q$-tensor framework. For the introduction to basic $Q$-tensor theory of liquid crystals, see \cite{Mottram-2014-arXiv} and \cite{Zarnescu-TMMA-2012} for instance. In this paper we specifically study the incompressible inertial Qian-Sheng model proposed by Qian and Sheng in \cite{Qian-Sheng-PRE-1998}, which keeps the inertial effect and which is the Q-tensor version of the classic Ericksen-Leslie model established in \cite{Leslie-1968-ARMA} and particularly captures the biaxial alignment of the molecules, a feature not available in  Ericksen-Leslie model. Since it contains the inertial effect, which is represented by a second material derivate, it has a hyperbolic feature. So we also call it {\em hyperbolic Qian-Sheng model}. De Anna and Zarnescu \cite{DeAnna-Zarnescu-JDE-2018} provided the first global in time existence result for it. In \cite{DeAnna-Zarnescu-JDE-2018}, some relatively restrictive assumptions on the transport coefficients were made.

The main goal and novelty of this paper is to completely clarify the entropy inequality and introduce a new Condition (H) which ensure the energy dissipation. Furthermore, we classify the coefficients requirements for large initial data, small initial data local in time smooth solutions, and small initial data global in time solutions, respectively. As a consequence, we significantly improve the well-posedness  results in \cite{DeAnna-Zarnescu-JDE-2018}.

In order to clearly state our results, we first describe the system and its derivation. The local orientation of the molecules is described through a function $Q$ taking values from $ \Omega \subseteq \R^d$ $(d = 2 \ \textrm{or } 3)$ into the set of so-called $d$-dimensional $Q$-tensors, that is symmetric and traceless $d \times d$ matrices:
\begin{equation*}
	\begin{aligned}
		S_{0}^{(d)}: = \big\{ Q \in \mathbb{R}^{d \times d}; Q_{ij} = Q_{ji}, \tr(Q)=0 , i, j = 1, \cdots, d \big\} \,.
	\end{aligned}
\end{equation*}
The evolution of $Q$ is driven by the free energy of the molecules, as well as the transport distortion and alignment effects caused by the flow. The bulk velocity of the centers of masses of molecules obeys a forced incompressible Navier-Stokes system, with an additional stress tensor, a forcing term modeling the effect that the interaction of the molecules has on the dynamics of the center of masses. More precisely, the non-dimensional equation can be explicitly written as:
\begin{equation}\label{CIQS}
\begin{cases}
\partial_{t} u + u \cdot \nabla u - \tfrac{1}{2} \beta_{4} \Delta u +\nabla p = \div (\Sigma_{1} + \Sigma_{2} + \Sigma_{3} ), \\
\ \ \ \ \ \ \ \ \ \ \ \ \ \ \ \ \ \ \  \ \ \ \ \div u =0\,, \\
J\ddot{Q} + \mu_{1} \dot{Q}= L \Delta Q - a Q + b(Q^{2}- \tfrac{1}{d} | Q |^{2}I_{d})-c Q | Q |^{2} + \tfrac{\tilde{\mu}_{2}}{2} A + \mu_{1}[\Omega, Q] \,,
\end{cases}
\end{equation}
with the forms of the notations $\Sigma_{i}\ (i=1,2,3)$
\begin{equation}\label{Sigma1-4}
\begin{aligned}
\Sigma_{1} & = - L \nabla Q \odot\nabla Q  \,, \\
\Sigma_{2}& = \beta_{1} Q  \tr(Q A)+ \beta_{5} A Q + \beta_{6} Q A \,, \\
\Sigma_{3} &= \tfrac{\mu_{2}}{2} (\dot{Q} - [\Omega, Q]) + \mu_{1} [Q, (\dot{Q} - [\Omega, Q])] \,.
\end{aligned}
\end{equation}
The symbol $ \dot{f}= ( \partial_{t} + u \cdot\nabla ) f $ denotes the {\em material derivative} and for any two $ d \times d$ matrices $ M , N $,
we denote their commutator as $ [ M , N ] := MN - NM $. We also denote the inner product on the space  of matrices as $ M : N =\tr (MN)$,
$ | M |$ denotes the Frobenius  norm of the matrix, i.e. $ | M |= \sqrt{ M : M^{\top} }$.
Furthermore, we denote $A_{ij} :=\tfrac{1}{2} (\partial_{j} u_{i} + \partial_{i} u_{j}) $,
$ \Omega_{ij} : = \tfrac{1}{2} (\partial_{j} u_{i} - \partial_{i} u_{j} ) $ for $ i, j= 1,\ldots ,d $ and the Ericksen tensor $\nabla Q \odot \nabla Q$ are $ (\nabla Q \odot \nabla Q)_{ij} = \Sigma_{k,l=1}^{d}
\partial_{i} Q_{kl} \partial_{j} Q_{kl} $, where we employ the notation $\partial_i f = \tfrac{\partial f}{\partial x_i}$ for a scalar function $f$.
The meanings of the constants, the physical relevance of the equations and there meanings will be illustrated in the next subsection. Furthermore, the initial data of \eqref{CIQS} is imposed on
\begin{equation}\label{initial date}
\begin{aligned}
(u, Q, \dot{Q}) (x,t)|_{t=0} = (u^{in}, Q^{in}, \tilde{Q}^{in})(x) \in \mathbb{R}^d \times S_{0}^{(d)} \times S_{0}^{(d)} \,.
\end{aligned}
\end{equation}


\subsection{Derivation of inertial Qian-Sheng model}\label{Sec:Phys-Derive}

In this subsection, we state the formal derivation of the Qian-Sheng model \eqref{CIQS}, which was proposed in \cite{Qian-Sheng-PRE-1998}. For the convenience of readers and more importantly, to illustrate the entropy inequality, we write down in some details for these the derivations. In the following we take the domain $\Omega$ to be $\R^d$. For a material volume $V_{ol} \subset \R^d$ bounded by a surface $A_{rea}$, we consider the following conservation laws of energy and angular moment (here and in the following we use the Einstein summation convention, of summation over repeated indexes):
\begin{equation}\label{con-energy}
\begin{aligned}
\tfrac{D}{D t} \int_{V_{ol}} (\tfrac{1}{2} \rho u_i u_i + \tfrac{1}{2} J \dot{Q}_{ij} \dot{Q}_{ij} + U) d V_{ol} \\
= \int_{V_{ol}} \rho r d V_{ol} + \int_{A_{rea}} ( t_i u_i + s_{ij} \dot{Q}_{ij} - h ) d A_{rea} \,,
\end{aligned}
\end{equation}
and
\begin{equation}\label{con-angular}
\begin{aligned}
\tfrac{D}{D t} \int_{V_{ol}} J \dot{Q}_{\alpha \beta} d V_{ol} = \int_{V_{ol}} g_{\alpha \beta} d V_{ol} + \int_{A_{rea}} s_{\alpha \beta} d A_{rea} \,,
\end{aligned}
\end{equation}
with an entropy production inequality of the form
\begin{equation}\label{Entropy1}
\begin{aligned}
\tfrac{D}{D t} \int_{V_{ol}} \rho S d V_{ol} - \int_{V_{ol}} \frac{\rho r}{T} d V_{ol} + \int_{A_{rea}} K d A_{rea} \geq 0 \,,
\end{aligned}
\end{equation}
where $\frac{D}{D t}$ also denotes the material time derivative with respect to the velocity $u$, $\rho$ is the constant density, $U$ is the internal energy per unit mass, $r$ is the heat supply per unit mass, per unit time, $t_i$ is the surface force per unit area, $s_{\alpha \beta}$ is the director surface force associated with the $Q$-tensor framework per unit area, $h$ is the flux of heat out of the volume per unit area, per unit time, and $g_{\alpha \beta}$ is an intrinsic director body force per unit volume in the $Q$-tensor regime. $S$ is the entropy per unit mass, $T$ is the temperature and $K$ is the flux of entropy out of the volume per unit area, per unit time. Moreover,
\begin{equation}\label{J}
\begin{aligned}
J > 0
\end{aligned}
\end{equation}
is the inertial constant, which captures the inertial effect of the liquid molecules.

Following almost the same arguments in Section 3 and Section 4 of \cite{Leslie-1968-ARMA}, we roughly sketch the outline of derivation of the inertial Qian-Sheng model \eqref{CIQS}. From the equations \eqref{con-energy} and \eqref{con-angular}, we have
\begin{equation}\label{Derivation-1}
\begin{aligned}
& \partial_i u_i = 0 \,, \quad t_i = \sigma_{ij} \nu_j \,, \quad \rho \dot{u}_i = \partial_j ( - p \delta_{ij} + \sigma_{ij} ) \,, \\
& s_{\alpha \beta} = \pi_{\gamma \alpha \beta } \nu_\gamma \,, \quad J \ddot{Q}_{\alpha \beta} = \partial_\gamma \pi_{\gamma \alpha \beta} + g_{\alpha \beta} \,,
\end{aligned}
\end{equation}
where $\sigma_{ij}$ are the components of the surface force across the $x_j$-planes, $\nu_j$ is the unit normal to the surface, $\pi_{ \gamma \alpha \beta }$ are the components of the director surface force across the $x_\gamma$-planes, $p$ is the pressure of the incompressible flow (i.e., the Lagrangian multiplier of the incompressibility), and $\delta_{ij}$ is the standard Kronecker delta symbol. Furthermore, \eqref{con-energy} also implies that
\begin{equation}\label{Derivation-2}
\begin{aligned}
h = q_i \nu_i \,, \quad \rho \dot{U} = \rho r - \partial_i q_i + \sigma_{ij} A_{ij} + \pi_{ \gamma \alpha \beta } N_{\alpha \beta \gamma} - g_{\alpha \beta} \mathscr{N}_{\alpha \beta} \,,
\end{aligned}
\end{equation}
where $q_i$ is the heat flux across the $x_i$-planes, where $\mathscr{N}$ stands for the co-rotational time flux of $Q$, whose $(\alpha, \beta)$-th component is defined as
\begin{equation}\label{General-4}
\begin{aligned}
\mathscr{N}_{\alpha \beta} := ( \dot{Q} - [\Omega , Q] )_{\alpha \beta} = \dot{Q}_{\alpha \beta} - \Omega_{\alpha l} Q_{l \beta} + Q_{\alpha l} \Omega_{l \beta} \,,
\end{aligned}
\end{equation}
and $N_{\alpha \beta \gamma} = \partial_\gamma \dot{Q}_{\alpha \beta} - [ \Omega, \partial_\gamma Q ]_{\alpha \beta}$. Here $\tr \mathscr{N} = \mathscr{N}_{\alpha \alpha} = 0$ (since $\tr \dot{Q} = 0$) and $\mathscr{N}$ represents the time rate of change of $Q_{\alpha \beta}$ with respect to the background fluid angular velocity $\omega = \tfrac{1}{2} \nabla \times u$.

The entropy inequality \eqref{Entropy1} implies that
\begin{equation}\label{Derivation-3}
\begin{aligned}
K = p_i \nu_i \,, \quad \rho T \dot{S} - \rho r + T \partial_i p_i \geq 0 \,,
\end{aligned}
\end{equation}
where $p_i$ is the flux of entropy across the $x_i$-planes. If we introduce a Helmholtz free energy $\mathscr{F}$ and a vector $\varphi_i$, where
\begin{equation}\label{Derivation-4}
\begin{aligned}
\mathscr{F} = U - T S \,, \quad \varphi_i = q_i - T p_i \,.
\end{aligned}
\end{equation}
We thereby derive from \eqref{Derivation-2}, \eqref{Derivation-3} and \eqref{Derivation-4} that
\begin{equation}\label{Derivation-5}
\begin{aligned}
\sigma_{ij} A_{ij} + \pi_{ \gamma \alpha \beta } N_{\alpha \beta \gamma} - g_{\alpha \beta} \mathscr{N}_{\alpha \beta} - p_i \partial_i T - \rho ( \dot{\mathscr{F}} + S \dot{T} ) - \partial_i \varphi_i \geq 0 \,.
\end{aligned}
\end{equation}
Noticing that $S = - \frac{\partial \mathscr{F}}{\partial T}$, from the similar arguments of constitutive equations in Section 4 of \cite{Leslie-1968-ARMA}, the entropy inequality \eqref{Derivation-5} will be
\begin{equation}\label{Derivation-6}
\begin{aligned}
\Big( \sigma_{ij} + \rho^2 \frac{\partial \mathscr{F}}{\partial \rho} \delta_{ij} + \rho \frac{\partial \mathscr{F}}{\partial ( \partial_i Q_{\alpha \beta} )} \partial_j Q_{\alpha \beta} \Big) A_{ij} - \Big( g_{\alpha \beta} + \rho \frac{\partial \mathscr{F}}{\partial Q_{\alpha \beta}} \Big) \mathscr{N}_{\alpha \beta} \\
- \big( p_i + \frac{\partial \varphi_i}{\partial T} \big) \partial_i T - \frac{\partial \varphi_i}{\partial Q_{\alpha \beta}} \partial_i Q_{\alpha \beta} \geq 0
\end{aligned}
\end{equation}
and
\begin{equation}\label{Derivation-7}
\begin{aligned}
\pi_{ \gamma \alpha \beta } = \rho \frac{\partial \mathscr{F}}{\partial ( \partial_\gamma Q_{\alpha \beta} )} + \frac{\partial \varphi_\gamma}{\partial \mathscr{N}_{\alpha \beta}} \,.
\end{aligned}
\end{equation}

Since the flow that we consider in this paper is isothermal and incompressible, i.e., $T$ is taken as a fixed constant and $\partial_i u_i = 0$, it holds that
\begin{equation*}
	\begin{aligned}
		\varphi_i = 0 \,, \quad p_i = \frac{q_i}{T} \,, \quad \partial_i T = 0  \,, A_{ii} = \partial_i u_i = 0\,.
	\end{aligned}
\end{equation*}
Then \eqref{Derivation-6} and \eqref{Derivation-7} can be rewritten as
\begin{equation}\label{Derivation-8}
\begin{aligned}
\Big( \sigma_{ij} + \rho \frac{\partial \mathscr{F}}{\partial ( \partial_i Q_{\alpha \beta} )} \partial_j Q_{\alpha \beta} \Big) A_{ij} - \Big( g_{\alpha \beta} + \rho \frac{\partial \mathscr{F}}{\partial Q_{\alpha \beta}} \Big) \mathscr{N}_{\alpha \beta} \geq 0
\end{aligned}
\end{equation}
and
\begin{equation}\label{Derivation-9}
\begin{aligned}
\pi_{ \gamma \alpha \beta } = \rho \frac{\partial \mathscr{F}}{\partial ( \partial_\gamma Q_{\alpha \beta} )} \,.
\end{aligned}
\end{equation}
We denote the values of the stress and intrinsic body force in static isothermal deformation by
\begin{equation}\label{Derivation-10}
\begin{aligned}
\sigma_{ij}^0 = - \rho \frac{\partial \mathscr{F}}{\partial ( \partial_i Q_{\alpha \beta} )} \partial_j Q_{\alpha \beta} \,, \quad g_{\alpha \beta}^0 = - \rho \frac{\partial \mathscr{F}}{\partial Q_{\alpha \beta}} \,,
\end{aligned}
\end{equation}
respectively, and employ the notations
\begin{equation}\label{Derivation-11}
\begin{aligned}
\sigma_{ij} = \sigma_{ij}^0 + \sigma_{ij}' \,, \quad g_{\alpha \beta} = g_{\alpha \beta}^0 - \lambda \delta_{\alpha \beta} + g_{\alpha \beta}'  \,,
\end{aligned}
\end{equation}
where $\lambda \in \R$ is the Lagrange multiplier enforcing the tracelessness and symmetry of the $Q$-tensor. One notices that $\lambda \delta_{\alpha \beta} \mathscr{N}_{\alpha \beta} = 0$ for any $\lambda \in \R$, since $\tr \mathscr{N} = 0$. Then the entropy inequality \eqref{Derivation-8} will be
\begin{equation}\label{Entropy2}
\begin{aligned}
\sigma'_{ij} A_{ij} - g'_{\alpha \beta} \mathscr{N}_{\alpha \beta} \geq 0 \,.
\end{aligned}
\end{equation}
In summary, the equations \eqref{Derivation-1}, \eqref{Derivation-9}, \eqref{Derivation-10} and \eqref{Derivation-11} imply that $(\rho, u, Q)$ subjects to the system
\begin{equation}\label{General-1}
\left\{
\begin{array}{c}
\rho \dot{u}_i = \partial_j ( - p \delta_{ij} + \sigma_{ij} ) \,, \quad \partial_i u_i = 0\,, \\[2mm]
J \ddot{Q}_{\alpha \beta} = \partial_\gamma \pi_{ \gamma \alpha \beta } + g_{\alpha \beta} \,,
\end{array}
\right.
\end{equation}
where
\begin{equation}\label{General-2}
\begin{aligned}
& \sigma_{ij} = - \rho \frac{\partial \mathscr{F}}{\partial ( \partial_i Q_{\alpha \beta} )} \partial_j Q_{\alpha  \beta} + \sigma'_{ij} \,, \\
& \pi_{\gamma \alpha \beta} = \rho \frac{\partial \mathscr{F}}{\partial ( \partial_\gamma Q_{\alpha \beta} )} \,, \\
& g_{\alpha \beta} = - \rho \frac{\partial \mathscr{F}}{\partial Q_{\alpha \beta}} + g'_{\alpha \beta} - \lambda \delta_{\alpha \beta} \,.
\end{aligned}
\end{equation}

Since the general incompressible inertial $Q$-tensor model for liquid crystals \eqref{General-1}-\eqref{General-2}-\eqref{Entropy2}-\eqref{General-4} is too complicated, we consider a simplified model by specializing some quantities. More precisely, we first take the simplest form of $\mathscr{F}$ such that $\rho \mathscr{F}$ is the Landau-de Gennes free energy density, namely,
\begin{equation}
\begin{aligned}
\rho \mathscr{F} = \tfrac{L}{2} |\nabla Q|^2 + \psi_B (Q) \,,
\end{aligned}
\end{equation}
which models the spatial variations through the $\tfrac{L}{2} |\nabla Q|^2$ term with positive diffusion coefficient
\begin{equation}\label{Diff-Coeff-L}
\begin{aligned}
L > 0 \,,
\end{aligned}
\end{equation}
and the nematic ordering enforced through the ``bulk term" $\psi_B (Q)$ taken to be of the following standard form (see \cite{Mottram-2014-arXiv}, for instance)
\begin{equation}
\begin{aligned}
\psi_B (Q) = \tfrac{a}{2} \tr (Q^2) - \tfrac{b}{3} \tr (Q^3) + \tfrac{c}{4} \big( \tr (Q^2) \big)^2
\end{aligned}
\end{equation}
with the phenomenological material constants
\begin{equation}\label{Coeffs-abc}
\begin{aligned}
a > 0 \,, b, c \in \R \,.
\end{aligned}
\end{equation}
Second, the viscous stress $\sigma'$ can be taken as
\begin{equation}\label{sigma-prime}
\begin{aligned}
\sigma'_{ij} : = \beta_1 Q_{ij} Q_{lk} A_{lk} + \beta_4 A_{ij} & + \beta_5 Q_{jl} A_{li} + \beta_6 Q_{il} A_{lj} \\
& + \mu_1 Q_{il} \mathscr{N}_{lj} - \mu_1 \mathscr{N}_{il} Q_{lj} + \tfrac{1}{2} \mu_2 \mathscr{N}_{ij} \,,
\end{aligned}
\end{equation}
and the viscous molecular field $g'$ can be given by
\begin{equation}
\begin{aligned}
g'_{\alpha \beta} : = \tfrac{1}{2} \tilde{\mu}_2 A_{\alpha \beta} - \mu_1 \mathscr{N}_{\alpha  \beta} \,,
\end{aligned}
\end{equation}
where $\beta_1$, $\beta_4$, $\beta_5$, $\beta_6$, $\mu_1$, $\mu_2$ and $\tilde{\mu}_2$ are viscosity coefficients subjecting to well-known {\em Parodi's relation}
\begin{equation}\label{Parodi-Rlt}
\begin{aligned}
\beta_6 - \beta_5 = \mu_2 \,.
\end{aligned}
\end{equation}
We emphasize that, in the above setting, the Lagrange multiplier $\lambda$ in \eqref{General-2} can be easily determined as $\lambda = - \tfrac{1}{d} b |Q|^2 I_d $ (with $I_d$ the $d \times d$ identity matrix). Let the constant density $\rho = 1$. Consequently, we deduce the incompressible inertial Qian-Sheng model \eqref{CIQS}.

\subsection{Entropy inequality and energy dissipation}
\subsubsection{Entropy inequality}

Now we clarify the coefficients implied by entropy inequality \eqref{Entropy2}. From the symmetry of $\mathscr{N}$, $Q$ and the facts $[Q, \mathscr{N}] : A = 0$ implied by the skew-symmetric property of $[Q, \mathscr{N}]$, we deduce that
\begin{equation*}
	\begin{aligned}
		\sigma'_{ij} A_{ij} - g'_{\alpha \beta} \mathscr{N}_{\alpha \beta} = & \beta_1 (Q : A)^2 + \beta_4 |A|^2 + (\beta_5 + \beta_6) \tr (QA^2) \\
		& + \mu_1 |\mathscr{N}|^2 + \tfrac{1}{2} ( \tilde{\mu}_2 - \mu_2 ) A : \mathscr{N} \,,
	\end{aligned}
\end{equation*}
which infer the total entropy
\begin{equation}\label{Entropy3}
\begin{aligned}
& \int_{\R^d} \! ( \sigma'_{ij} A_{ij} - g'_{\alpha \beta} \mathscr{N}_{\alpha \beta} ) d x \\
= & \beta_1 \int_{\R^d} \! ( Q: A )^2 d x + \tfrac{1}{2} \beta_4 \int_{\R^d} \! |\nabla u|^2 d x + (\beta_5 + \beta_6) \int_{\R^d} \tr (Q A^2) d x \\
& + \mu_1 \int_{\R^d} |\dot{Q} - [\Omega , Q]|^2 d x + \tfrac{1}{2} ( \tilde{\mu}_2 - \mu_2 ) \int_{\R^d} A : ( \dot{Q} - [\Omega , Q] ) d x \,.
\end{aligned}
\end{equation}
Here the relation
\begin{equation*}
	\begin{aligned}
		\int_{\R^d} \beta_4 |A|^2 d x = \tfrac{1}{2} \beta_4 \int_{\R^d} \! |\nabla u|^2 d x
	\end{aligned}
\end{equation*}
has been utilized, which derives from the incompressibility $\div u = 0$ and the integration by parts. The entropy inequality \eqref{Entropy2} requires the right-hand side of \eqref{Entropy3} must be non-negative for all functions $u$ and $Q$. We claim that this requirement is fulfilled {\em if and only if} the coefficients satisfy
\begin{align}
	\label{Entropy-Coeffs} & \beta_1 \geq 0 \,, \ \beta_4 > 0 \,, \ \mu_1 > 0 \,, \\
	\label{Entropy-Coeffs-1} &  \qquad \qquad \beta_5 + \beta_6 = 0 \,,
\end{align}
and
\begin{equation}\label{H}
(\tilde{\mu}_2-\mu_2)^2 < 8\beta_4\mu_1\,,
\end{equation}
which is equivalent to the fact: there are generic constants $\delta_0 , \delta_1 \in (0,1]$ such that
\begin{equation}\label{Entropy-Coeffs-2}
\begin{aligned}
\tfrac{1}{2} \beta_4 |X|^2 + \mu_1 |Y|^2 +  \tfrac{1}{2} ( \tilde{\mu}_2 - \mu_2 ) X:Y \geq \delta_0 \tfrac{1}{2} \beta_4 |X|^2 + \delta_1 \mu_1 |Y|^2
\end{aligned}
\end{equation}
for all matrices $X, Y \in \R^{d \times d}$. We explain these coefficients requirements in detail as following:
\begin{itemize}
	\item The requirement $\beta_1 \geq 0 $ is apparent;
	\item The integrand $\tr (Q A^2)$ is un-signed, so the only way to ensure this term non-negative is to settle $\beta_5 + \beta_6 = 0\,;$
	\item The integrand $A : ( \dot{Q} - [\Omega , Q] ) = \nabla u : ( \dot{Q} - [\Omega , Q] )$ is un-signed, but it can be controlled by $|A|^2$ and $|\dot{Q} - [\Omega , Q]|^2$, which appear in the 2nd and 4th terms of the right-hand side of \eqref{Entropy3}. So we require $\beta_4 > 0 \,, \ \mu_1 > 0 $ and the condition \eqref{Entropy-Coeffs-2}. We will show below that \eqref{Entropy-Coeffs-2} is equivalent to condition \eqref{H}.
\end{itemize}
Note $u$ and $Q$ are arbitrary, so the above conditions are also necessary.

In fact, the condition \eqref{Entropy-Coeffs-2} reads
\begin{equation*}
	\begin{aligned}
		(1 - \delta_0) \tfrac{1}{2}  \beta_4 |X|^2 + (1 - \delta_1) \mu_1 |Y|^2 + \tfrac{1}{2} (\tilde{\mu}_2 - \mu_2) X:Y \geq 0
	\end{aligned}
\end{equation*}
for all $X, Y \in \R^{d \times d}$, which is equivalent to $ \tfrac{1}{4} ( \tilde{\mu}_2 - \mu_2 )^2 - 4 (1 - \delta_0) (1 - \delta_1) \tfrac{1}{2} \beta_4 \mu_1 \leq 0 $. In other words,
\begin{equation}\label{Lower-Bnd}
\begin{aligned}
(1 - \delta_0) (1 - \delta_1)  \geq \tfrac{(\tilde{\mu}_2 - \mu_2)^2}{8\beta_4 \mu_1 } > 0 \,,
\end{aligned}
\end{equation}
which is apparently equivalent to condition \eqref{H}. We remark that condition \eqref{H} or \eqref{Entropy-Coeffs-2} does not appear in the previous literatures for $Q$ tensor system of liquid crystals.

In the original paper \cite{Qian-Sheng-PRE-1998} of Qian and Sheng, they set $\tilde{\mu}_2 =\mu_2$, which can be derived from the Onsager theorem \cite{deGroot-Mazur-1962-BOOK}. In this case we can take $\delta_0 = \delta_1 = 1$ in \eqref{Entropy-Coeffs-2} and the condition \eqref{H} is automatically satisfied. Some special case of $\tilde{\mu}_2\neq \mu_2$ was also considered, for example, $\tilde{\mu}_2 = -\mu_2 $ in \cite{PopaNita-Oswald-2002-PRE, PopaNita-Sluckin-Kralj-2005-PRE}.

\subsubsection{Energy dissipation}

However, the entropy inequality does not guarantees the energy dissipation which is crucial in well-posedness. In the {\em a priori} estimate, as shown in Lemma \ref{priori}, the main term that should provide dissipation is
\begin{equation*}
	\begin{aligned}
		\tfrac{1}{2} \beta_4 \| \nabla u \|^2_{L^2} + \mu_1 \| \dot{Q} \|^2_{L^2} + \mu_1 \| [\Omega , Q] \|^2_{L^2} - \mu_2 \langle A, [\Omega, Q] \rangle - \tfrac{1}{2} ( \tilde{\mu}_2 - \mu_2 ) \langle A , \dot{Q} \rangle \,.
	\end{aligned}
\end{equation*}
Under the entropy inequality \eqref{Entropy2}, or its equivalent requirements on the coefficients \eqref{Entropy-Coeffs}, \eqref{Entropy-Coeffs-1} and \eqref{H}, the above quantity is not guaranteed to be positive. Some more restrictions on these coefficients are necessary.  Thus, we give the following condition, which is an improved version of \eqref{Entropy-Coeffs-2}. In fact, we require that there are two constants $\delta_0, \delta_1 \in (0, 1]$ such that
\begin{equation}\label{Condition-H}
\begin{aligned}
F(X, Y, Z) : = & \tfrac{1}{2} \beta_4 |X|^2 + \mu_1 ( |Y|^2 + |Z|^2 ) - \tfrac{1}{2} ( \tilde{\mu}_2 - \mu_2 ) X:Y - \mu_2 X:Z \\
\geq & \delta_0 \tfrac{1}{2} \beta_4 |X|^2 + \delta_1 \mu_1 |Y|^2
\end{aligned}
\end{equation}
for all $X, Y, Z \in \R^{d \times d}$. We rewrite \eqref{Condition-H} as
\begin{equation}
(1-\delta_0)\tfrac{1}{2}\beta_4|X|^2 + (1-\delta_1)\mu_1 |Y|^2 + \mu_1|Z|^2 - \tfrac{1}{2}(\tilde{\mu}_2 -\mu_2)X:Y-\mu_2 X:Z \geq 0\,,
\end{equation}
for all vector $X$, $Y$ and $Z$. This is equivalent to the Hessian matrix of this quadratic form is non-negative, i.e.
\begin{equation}
\begin{aligned}
\left(
\begin{array}{ccc}
(1-\delta_0)\beta_4 & -\tfrac{1}{2}(\tilde{\mu}_2- \mu_2) & -\mu_2\\
-\tfrac{1}{2}(\tilde{\mu}_2- \mu_2) & 2\mu_1(1-\delta_1) & 0\\
-\mu_2 & 0 & 2\mu_1\\
\end{array}
\right) \geq 0\,,
\end{aligned}
\end{equation}
which is further equivalent to the following three conditions:
\begin{equation}\label{CH-1}
\beta_4> 0\,,\quad\!\mbox{and}\quad\! (\tilde{\mu}_2-\mu_2)^2 < 8\beta_4\mu_1\,,
\end{equation}
and
\begin{equation}\label{CH-2}
(\tilde{\mu}_2-\mu_2)^2 + 4 \mu_2^2< 8\beta_4\mu_1\,.
\end{equation}
The first inequality of \eqref{CH-1} is already included in \eqref{Entropy-Coeffs}-\eqref{Entropy-Coeffs-1}, and the second inequality is exactly \eqref{H}, which is weaker than \eqref{CH-2}. We call \eqref{CH-2} as {\bf Condition (H)}, which is equivalent to \eqref{Condition-H} in practice.

We collect the other coefficients' relations \eqref{J}, \eqref{Diff-Coeff-L}, \eqref{Coeffs-abc}, \eqref{Parodi-Rlt} and \eqref{Entropy-Coeffs} here, namely,
\begin{equation}\label{Coeffs-Rlt-all}
\begin{aligned}
& a > 0 \,, \ b, c \in \R \,, \ J > 0 \,, \ L > 0 \,, \\
& \beta_1 \geq 0 \,, \ \beta_6 - \beta_5 = \mu_2 \,,\\
& \beta_4 > 0 \,, \ \mu_1>0 \,.
\end{aligned}
\end{equation}



\subsection{Notations and main results}
For the sake of convenience, we first introduce some notations throughout this paper.
We denote by $A \lesssim B $ if there exists a constant $C > 0 $,
such that $ A \leq  C B$.  For convenience, we also denote $L^{p} := L^{p}( \mathbb{R}^{d})$ by the standard $L^p$ space for all $ p \in [1 , +\infty ] $. For $p=2$, we use the notation $\langle \cdot , \cdot \rangle $ to represent
the inner product on the Hilbert space $ L^{2} $.

For any multi-index $ k=( k_1, k_2, \cdots, k_d) $ in $ \mathbb{N}^d$, we denote the $ k^{th} $ partial derivative operator by
\begin{equation}\no
\begin{aligned}
\partial^{k} = \partial^{k_1}_{x_1}  \partial^{k_2}_{x_2} \cdots \partial^{k_d}_{x_d} \,.
\end{aligned}
\end{equation}
We employ the notation $k \leq k'$ to represent that every component of $k \in \mathbb{N}^d$ is not greater than that of $k' \in \mathbb{N}^d$. Moreover, $k < k'$ means that $k \leq k'$ and $|k| < |k'|$, where $|k| = k_1 + k_2 + \cdots + k_d \in \mathbb{N}$. We now define the following two Sobolev spaces $H^s$ and $\dot{H}^s$ endowed with the norms
\begin{equation}\nonumber
\begin{aligned}
\| f \|_{H^{s}} = \big( \sum^{s}_{|k|=0} \| \partial^{k} f  \|^{2}_{L^2} \big)^{\frac{1}{2}} \,, \quad \| f \|_{\dot{H}^{s}} = \big( \sum^{s}_{|k|=1} \| \partial^{k} f \|^{2}_{L^2} \big)^{\frac{1}{2}} \,.
\end{aligned}
\end{equation}
respectively.

We now introduce the following energy functional $E(t)$ as
\begin{equation}\label{Ener}
\begin{aligned}
E(t)= \| u \|^{2}_{H^{s}} + J \|\dot{Q} \|^{2}_{H^{s}}
+L \| \nabla Q \|^{2}_{H^{s}} +a \| Q \|^{2}_{H^{s}} \,.
\end{aligned}
\end{equation}
In particular, the initial energy is given as
\begin{equation}\label{IC-Ener-1}
\begin{aligned}
E^{in} =  \| u^{in} \|^{2}_{H^{s}}
+ J \| \tilde{Q}^{in}\| ^{2}_{H^{s}}
+L \| \nabla Q^{in} \|^{2}_{H^{s}} + a \| Q^{in} \|^{2}_{H^{s}} \,.
\end{aligned}
\end{equation}

Then, the main results of this paper are stated as follows:

\begin{theorem}[Large data local well-posedness]\label{Main-Thm-1}
	Let the integer $ s > \frac{d}{2} + 1 $ $(d=2 \textrm{ or } 3)$  and the coefficients satisfy \eqref{Coeffs-Rlt-all}. If $\beta_5+\beta_6=0$, and the Condition (H) \eqref{CH-2} are satisfied, and the initial energy $E^{in} < \infty$,  Then there exists a $ T > 0$, depending only on the initial data, the all coefficients, $s$ and $d$, such that the Cauchy problem \eqref{CIQS}-\eqref{initial date} admits a unique solution $(u, Q)$ satisfying
	\begin{equation}\label{Regularities-Solt}
	\begin{aligned}
	u \in& L^{\infty} (0, T; H^{s} ( \mathbb{R}^d ) ) \cap L^{2} (0, T; \dot{H}^{s+1} ( \mathbb{R}^d ) ) \,, \\
	\dot{Q} \in & L^{\infty} (0, T; H^{s} ( \mathbb{R}^d ) ) \cap L^{2} (0, T; H^{s} ( \mathbb{R}^d ) ) \,,\\
	Q \in&  L^{\infty} (0, T; H^{s+1} ( \mathbb{R}^d ) ) \,,
	\end{aligned}
	\end{equation}
	and subjecting to the following energy bound
	\begin{equation}\label{Enrg-Bnd-Loc}
	\begin{aligned}
	& \sup_{ 0 \leq t \leq T} \Big(  \| u\|^{2}_{H^{s}} + \| \dot{Q} \|^{2}_{H^{s}} + \| \nabla Q \|^{2}_{H^{s}} + \| Q \|^{2}_{H^{s}} \Big) \\
	& + \int^{T}_{0} \Big\{ \| \nabla u  \|^{2}_{H^{s}} + \sum_{|k| \leq s} \big(  \| Q : \partial^{k} A \|^{2}_{L^2} + \| \partial^{k} \dot{Q} \|^{2}_{L^{2}}  + \| [ \partial^{k} \Omega , Q ] \|^{2}_{L^{2}} \big) \Big\} d t \leq C_{0} \,,
	\end{aligned}
	\end{equation}
	where the constant  $ C_0 > 0$ depends only on the initial data, the all coefficients, $s$, $d$ and $ T $.
\end{theorem}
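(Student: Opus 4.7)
The plan is to derive uniform a priori estimates for smooth approximate solutions and then pass to the limit by compactness. Since the system couples a parabolic Navier-Stokes part with a hyperbolic equation for $Q$ (through the second material derivative $J\ddot{Q}$), one natural scheme is to construct approximations by a Friedrichs-type mollification of the nonlinear terms, or equivalently by a Picard iteration that linearizes the transport and the cubic $Q|Q|^2$ around the previous iterate. For each approximate problem standard linear theory produces a solution on a short interval, so the whole question reduces to obtaining a time of existence $T > 0$ and an energy bound independent of the regularization parameter.

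For the core a priori estimate I would work at the level of $\partial^k$ for every multi-index $|k| \leq s$. Testing the velocity equation against $\partial^k u$ (using $\div u = 0$ to discard the pressure) and testing the $Q$-equation against $\partial^k \dot{Q}$ produces $\tfrac{d}{dt}\bigl(\tfrac{1}{2}\|\partial^k u\|_{L^2}^2 + \tfrac{J}{2}\|\partial^k\dot{Q}\|_{L^2}^2 + \tfrac{L}{2}\|\partial^k \nabla Q\|_{L^2}^2 + \tfrac{a}{2}\|\partial^k Q\|_{L^2}^2\bigr)$ together with a quadratic dissipation and cubic or higher remainder terms. Using $\beta_5+\beta_6=0$ to rewrite $\beta_5 AQ + \beta_6 QA = \beta_6[Q,A]$, integrating by parts, and combining the contributions from $\div\Sigma_3$ with the $\tfrac{\tilde\mu_2}{2}A+\mu_1[\Omega,Q]$ forcing in the $Q$-equation, the leading quadratic part assembles into exactly the functional $F\bigl(\partial^k A,\ \partial^k \dot{Q},\ [\partial^k \Omega, Q]\bigr)$ of \eqref{Condition-H} modulo commutator errors. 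Condition (H) then supplies the coercive lower bound $\delta_0\tfrac{1}{2}\beta_4 \|\partial^k \nabla u\|_{L^2}^2 + \delta_1 \mu_1 \|\partial^k \dot{Q}\|_{L^2}^2$ which, together with the surviving $\mu_1\|[\partial^k\Omega,Q]\|_{L^2}^2$, provides precisely the dissipation appearing in \eqref{Enrg-Bnd-Loc}.

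The remaining task is to estimate commutator and nonlinear source terms by the energy. Since $s > \tfrac{d}{2}+1$, $H^s(\mathbb{R}^d)$ is a Banach algebra and the standard Kato-Ponce and Moser inequalities apply. The transport commutator $[\partial^k,\, u\cdot\nabla] u$ in the momentum equation, the analogous $[\partial^k,\, u\cdot\nabla]\dot{Q}$ arising when expanding $\partial^k \ddot{Q}$, the polynomial terms $Q^2$ and $cQ|Q|^2$ from $\psi_B$, the Ericksen stress $\nabla Q \odot \nabla Q$, and the cubic piece $\mu_1[Q, \mathscr{N}]$ in $\Sigma_3$ can each be bounded by a polynomial $P(E(t))$ multiplied by one of $\|\nabla u\|_{H^s}$, $\|\dot{Q}\|_{H^s}$ or $\|[\Omega, Q]\|_{H^s}$. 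Young's inequality absorbs a small fraction of these factors into the coercive dissipation and leaves a differential inequality of the schematic form $\tfrac{d}{dt} E(t) + \tfrac{\delta}{2} D(t) \leq C\,P\bigl(E(t)\bigr)$, where $D(t)$ is the integrand in \eqref{Enrg-Bnd-Loc}. A continuity argument then produces $T = T(E^{in}) > 0$ on which $E(t)$ stays bounded and $D$ is integrable, uniformly in the regularization; Aubin-Lions compactness yields a solution in the class \eqref{Regularities-Solt}, and uniqueness follows from an $L^2$ energy estimate on the difference of two solutions using the Lipschitz bounds furnished by the $H^s$ norm.

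The main obstacle lies in the algebraic regrouping of the second step: the $\mu_2$-cross terms generated by $\div \Sigma_3$ and by the $\mu_1[\Omega, Q]$ forcing must combine precisely with the $\tfrac{\tilde{\mu}_2}{2} A$ term to produce the functional $F$ of Condition (H). Under only the entropy inequality \eqref{H} the extra $-\mu_2\, X\!:\!Z$ cross term in $F$ would lack a matching $|Z|^2$ buffer and the dissipation could fail to be coercive; this is exactly why the strengthened inequality \eqref{CH-2} is required, and it clarifies why the large-data local result demands Condition (H) rather than only the entropy inequality.
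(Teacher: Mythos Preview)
Your approach is essentially the paper's: mollifier approximation, test $\partial^k u$ and $\partial^k\dot Q$, assemble the quadratic dissipation into the functional $F$ of \eqref{Condition-H}, invoke Condition~(H) for coercivity, bound the remainders via Moser-type inequalities, and close with a Gr\"onwall/continuity argument. The algebraic regrouping you highlight (the $\mu_2$ and $\tilde\mu_2$ cross terms combining into $F$) is exactly the paper's computation of the piece labeled $H_2$ (resp.\ $I_3^e$) in the proof of Lemma~\ref{priori}.

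One point you gloss over is worth making explicit. You list ``the Ericksen stress $\nabla Q\odot\nabla Q$'' among the nonlinear terms to be estimated directly by Kato--Ponce. Taken literally this fails at top order: $\langle\div\partial^k(\nabla Q\odot\nabla Q),\partial^k u\rangle$ contains a contribution with $s+2$ derivatives on $Q$, one more than $E(t)$ controls. The paper resolves this by the exact cancellation
\[
-L\langle\div(\nabla Q\odot\nabla Q),u\rangle + L\langle\Delta Q,\,u\cdot\nabla Q\rangle = 0,
\]
the second term arising precisely because you test $L\Delta Q$ against $\dot Q=\partial_t Q+u\cdot\nabla Q$ rather than $\partial_t Q$. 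After this cancellation (and its higher-order analogue, the term $I_1$ in the paper) the remainder is a genuine commutator bounded by $\|\nabla Q\|_{H^s}^2\|\nabla u\|_{H^s}$. Your energy identity already produces both pieces, so the fix is only to pair them before estimating.

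A minor correction: Condition~(H) gives the lower bound $\delta_0\tfrac{\beta_4}{2}|X|^2+\delta_1\mu_1|Y|^2$ but does \emph{not} guarantee that $\mu_1\|[\partial^k\Omega,Q]\|_{L^2}^2$ survives with a positive coefficient. The paper's dissipation $D(t)$ accordingly omits this term; the $\|[\partial^k\Omega,Q]\|^2_{L^2}$ appearing in \eqref{Enrg-Bnd-Loc} is recovered a posteriori from $\|Q\|_{H^s}\|\nabla u\|_{H^s}^2$, not carried through the differential inequality.
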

When the relation $\beta_5+ \beta_6=0$ which comes from entropy inequality and Condition (H) \eqref{CH-2} are NOT SIMULTANEOUSLY satisfied, even for local in time solution, the initial data is required to be small. This is the following theorem.

\begin{theorem}[Small data local well-posedness]\label{Main-Thm-1-prime}
	Let the integer $ s > \frac{d}{2} + 1 $ $(d=2 \textrm{ or } 3)$  and the coefficients satisfy \eqref{Coeffs-Rlt-all}. If one of the relation $\beta_5+ \beta_6 \neq 0$ or $(\tilde{\mu}_2-\mu_2)^2 + 4 \mu_2^2 \geq  8\beta_4\mu_1$ is satisfied, then there exists a small number $\eps_1>0$, depending on the coefficients, $s$ and $d$, such that, if the initial energy $E^{in} < \eps_1$, then the local in time existence and energy bound in Theorem \ref{Main-Thm-1} are also hold.
\end{theorem}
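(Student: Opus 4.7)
The plan is to run essentially the same energy method as in Theorem \ref{Main-Thm-1}, observing that at each point where that proof invokes $\beta_5+\beta_6=0$ or Condition (H) \eqref{CH-2} for absorption, the corresponding ``bad'' term carries a small prefactor---either $\|Q\|_{L^\infty}$, coming from commutators $[\Omega,Q]$ or from the trace $\tr(QA^2)$, or $\|\dot Q\|_{L^2}$ bounded by $\sqrt{E/J}$---which can be made arbitrarily small via a short-time bootstrap anchored in the initial smallness $E^{in}<\eps_1$.

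Concretely, I would redo the $H^s$ estimate. After applying $\partial^k$ with $|k|\leq s$ to \eqref{CIQS}, testing the velocity equation against $\partial^k u$ and the $Q$-equation against $\partial^k\dot Q$, and integrating by parts using $\div u=0$, the left-hand side produces $\tfrac12\tfrac{d}{dt}E(t)$ together with the natural dissipation
\begin{equation*}
D_k(t)=\tfrac{1}{2}\beta_4\|\nabla\partial^k u\|_{L^2}^2+\mu_1\|\partial^k\dot Q\|_{L^2}^2+\mu_1\|[\partial^k\Omega,Q]\|_{L^2}^2-\mu_2\skp{\partial^k A}{[\partial^k\Omega,Q]}-\tfrac{1}{2}(\tilde\mu_2-\mu_2)\skp{\partial^k A}{\partial^k\dot Q},
\end{equation*}
a contribution $(\beta_5+\beta_6)\skp{Q\,\partial^k A}{\partial^k A}$ from $\Sigma_2$, and lower-order commutator remainders handled as in Theorem \ref{Main-Thm-1}. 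If $\beta_5+\beta_6\neq 0$, Hölder and Sobolev give $|(\beta_5+\beta_6)\skp{Q\,\partial^k A}{\partial^k A}|\leq C|\beta_5+\beta_6|\|Q\|_{H^s}\|\nabla\partial^k u\|_{L^2}^2$, absorbed into $\tfrac14\beta_4\|\nabla\partial^k u\|_{L^2}^2$ once $\|Q\|_{H^s}$ is small. If Condition (H) fails, the cross term containing $[\partial^k\Omega,Q]$ obeys
\begin{equation*}
|\mu_2\skp{\partial^k A}{[\partial^k\Omega,Q]}|\leq 2|\mu_2|\|Q\|_{L^\infty}\|\partial^k A\|_{L^2}\|\partial^k\Omega\|_{L^2}\leq C|\mu_2|\|Q\|_{H^s}\|\nabla\partial^k u\|_{L^2}^2,
\end{equation*}
again absorbable by $Q$-smallness, while the residual two-variable form $\tfrac12\beta_4|\partial^k A|^2+\mu_1|\partial^k\dot Q|^2-\tfrac12(\tilde\mu_2-\mu_2)\skp{\partial^k A}{\partial^k\dot Q}$ is coercive iff entropy \eqref{H} holds; if \eqref{H} itself fails, I instead bound
\begin{equation*}
\tfrac12|\tilde\mu_2-\mu_2|\,|\skp{\partial^k A}{\partial^k\dot Q}|\leq \tfrac14\beta_4\|\nabla\partial^k u\|_{L^2}^2+\tfrac{(\tilde\mu_2-\mu_2)^2}{4\beta_4 J}E(t),
\end{equation*}
using $\|\partial^k\dot Q\|_{L^2}^2\leq J^{-1}E(t)$, which converts the offending term into a harmless linear-in-$E$ forcing on the right-hand side.

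Summing over $|k|\leq s$ and folding in the nonlinear estimates from Theorem \ref{Main-Thm-1} yields $\tfrac{d}{dt}E(t)+\tfrac12 D(t)\leq C_1 E(t)+C_2 P(E(t))$ with $P$ a polynomial vanishing at $0$ and $D(t)=\sum_{|k|\leq s}D_k(t)$, valid so long as $\|Q\|_{H^s}$ stays below a coefficient-dependent threshold. Since $\|Q\|_{H^s}^2\leq a^{-1}E(t)$, a standard bootstrap closes the argument: one chooses $\eps_1$ so small that $\sqrt{2\eps_1/a}$ falls below the thresholds demanded in the two absorption steps above, and then picks $T>0$ so small that the $C_1$-Gronwall and $P(E)$ contributions keep $E(t)\leq 2\eps_1$ on $[0,T]$, giving the a priori bound \eqref{Enrg-Bnd-Loc}. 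A Friedrichs regularization identical to that used for Theorem \ref{Main-Thm-1} then promotes this bound to a bona fide solution, and uniqueness is unchanged. The main obstacle is the careful bookkeeping of commutator remainders generated by $\partial^k[\Omega,Q]$ versus $[\partial^k\Omega,Q]$ and by $\partial^k(QA^2)$, and verifying that no residual ``unabsorbable'' quadratic term---one that lacks both a $\|Q\|_{L^\infty}$ factor and a factor soaked up into Gronwall---survives once all cases of failed \eqref{CH-2} (entropy \eqref{H} holding or not) are enumerated.
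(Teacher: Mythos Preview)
Your proposal is correct and follows essentially the same approach as the paper. The paper packages the argument through a single a priori estimate (Lemma \ref{priori}) with an indicator $\delta_H$ that switches on the extra terms $E^{1/2}(t)D(t)$ and $|\tilde\mu_2-\mu_2|^2 E(t)$ when Condition (H) fails, then runs a continuity argument on $\mathbb{E}(t)=E(t)+\int_0^t D(\tau)\,d\tau$ with $\eps_1$ chosen so that $C\mathbb{E}^{1/2}(0)<\tfrac12 c_0$; your case split (entropy \eqref{H} holding or not within the failure of \eqref{CH-2}) and your Young-plus-Gronwall treatment of the $(\tilde\mu_2-\mu_2)$ cross term match the paper's estimates \eqref{F-2}--\eqref{F-4} line by line, and your bootstrap is equivalent to their continuity argument.
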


Then we study the global existence of classical solutions to the system \eqref{CIQS}.

\begin{theorem}[Global well-posedness]\label{Main-Thm-2}
	Let the integer $ s > \frac{d}{2} + 1 $ $(d=2 \textrm{ or } 3)$  and the coefficients satisfy \eqref{Coeffs-Rlt-all}. Furthermore, one of the following two conditions are satisfied:
	\begin{enumerate}
		\item $(\tilde{\mu}_2-\mu_2)^2 < 8\beta_4\mu_1$, i.e. the entropy inequality;
		\item $\tilde{\mu}_2 = \mu_2\,,$
	\end{enumerate}
	then, there is a small $\eps_2 > 0$, depending only on the all coefficients, $s$ and $d$, such that if
	\begin{equation}\label{IC-Ener-glb}
	\begin{aligned}
	E^{in} : = \| u^{in} \|^{2}_{H^{s}} +J \| \tilde{Q}^{in} \|^{2}_{H^{s}} + L \| \nabla Q^{in} \|^{2}_{H^{s}} +a \| Q^{in} \|^{2}_{H^{s}} \leq \eps_2 \,,
	\end{aligned}
	\end{equation}
	the system \eqref{CIQS}-\eqref{initial date} admits a unique global solution $(u,Q)$ satisfying
	\begin{equation*}
		\begin{aligned}
			u, \dot{Q} \in L^\infty (\R^+; H^s (\R^d)) \,, \ Q \in L^\infty (\R^+, H^{s+1} (\R^d)) \,, \ \nabla u \in L^2 (\R^+; H^s (\R^d)) \,,
		\end{aligned}
	\end{equation*}
	and subjecting to the following inequality
	\begin{equation}\label{Ener-Glb}
	\begin{aligned}
	\sup_{t \geq 0}(\| u\|^{2}_{H^{s}}
	+\| \dot{Q}\|^{2}_{H^{s}}
	+ \| \nabla Q\|^{2}_{H^{s}}
	+ \| Q\|^{2}_{H^{s}})(t) + \int^{\infty}_{0}\|\nabla u\|^{2}_{H^{s}}dt
	\leq C_{1}E^{in}
	\end{aligned}
	\end{equation}
	for some $C_1 > 0$,  depending only on the all coefficients, $s$ and $d$.
\end{theorem}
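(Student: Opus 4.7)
\textbf{Proof proposal for Theorem \ref{Main-Thm-2}.} My plan is the standard local-plus-continuation scheme: invoke the local existence from Theorem \ref{Main-Thm-1} or Theorem \ref{Main-Thm-1-prime} to obtain a smooth solution on a short interval, then establish a closed global \emph{a priori} energy estimate of the form
\begin{equation*}
\tfrac{d}{dt} E(t) + D(t) \leq C\sqrt{E(t)}\, D(t) + C E(t)^{3/2},
\end{equation*}
where $D(t)$ is a suitable dissipation functional dominating $\|\nabla u\|_{H^s}^2$. Provided $E^{in}$ is small enough, a bootstrap argument then shows $E(t) \lesssim E^{in}$ and $\int_0^\infty D(t)\,dt \lesssim E^{in}$ for all $t\geq 0$, which allows iterating the local theorem indefinitely and yields the global bound \eqref{Ener-Glb}.

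For the energy estimate itself, I would apply $\partial^k$ with $|k|\leq s$ to each equation in \eqref{CIQS}, pair the momentum equation with $\partial^k u$ and the $Q$-equation with $\partial^k \dot Q$, and sum. Several exact cancellations must be exploited: the Ericksen term in $\Sigma_1$ cancels against $L\Delta Q$ to produce $\tfrac{d}{dt}\tfrac{L}{2}\|\nabla \partial^k Q\|_{L^2}^2$; the $\beta_1 Q\tr(QA)$ coupling produces $\|Q:\partial^k A\|_{L^2}^2$; the $\mu_1[\Omega,Q]$ terms in the momentum stress pair with the $\mu_1[\Omega,Q]$ torque in the $Q$-equation to give $\mu_1\|\mathcal{N}\|^2$ contributions; and the $\beta_5 A Q+\beta_6 Q A$ plus $\tfrac{\mu_2}{2}\mathcal{N}$ combination in $\Sigma_2,\Sigma_3$ pair with $\tfrac{\tilde\mu_2}{2}A$ in the $Q$-equation to produce precisely the quadratic form
\begin{equation*}
\tfrac{1}{2}\beta_4\|\partial^k\nabla u\|_{L^2}^2 + \mu_1\|\partial^k\dot Q\|_{L^2}^2 + \mu_1\|[\partial^k\Omega,Q]\|_{L^2}^2 - \mu_2\langle\partial^k A,[\partial^k\Omega,Q]\rangle - \tfrac{1}{2}(\tilde\mu_2-\mu_2)\langle\partial^k A,\partial^k\dot Q\rangle
\end{equation*}
(modulo lower-order commutator terms that are cubic in $E$ by Moser-type product estimates, using $s>d/2+1$). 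The remaining contributions from $a Q$, $b(Q^2-\tfrac{1}{d}|Q|^2 I_d)$ and $-cQ|Q|^2$ produce $a\tfrac{d}{dt}\|\partial^k Q\|_{L^2}^2$ together with polynomially controlled remainders.

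The main obstacle is to ensure the above quadratic form is bounded below by a genuine dissipation $D(t)$ under each of the two alternative conditions, since neither implies Condition (H) directly. Under hypothesis (2), $\tilde\mu_2=\mu_2$, the cross term involving $\partial^k\dot Q$ vanishes and one is left with $\tfrac{1}{2}\beta_4|X|^2+\mu_1|Z|^2-\mu_2 X:Z$ acting on $(X,Z)=(\partial^k A,[\partial^k\Omega,Q])$; this is not sign-definite in $(X,Z)$ alone, but I would exploit $\|[\partial^k\Omega,Q]\|_{L^2}\lesssim \|Q\|_{L^\infty}\|\partial^k\nabla u\|_{L^2}+\text{l.o.t.}$ and the Sobolev embedding $\|Q\|_{L^\infty}\lesssim \sqrt{E(t)}$ to absorb the $\mu_2$ cross term into $\tfrac{1}{2}\beta_4\|\partial^k\nabla u\|_{L^2}^2$ once $E(t)$ is sufficiently small, and simultaneously extract $\mu_1\|\partial^k\dot Q\|_{L^2}^2$ as genuine dissipation. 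Under hypothesis (1), the entropy inequality \eqref{H} together with \eqref{Entropy-Coeffs-2} immediately yields dissipation $\delta_0\tfrac{1}{2}\beta_4\|\partial^k\nabla u\|_{L^2}^2+\delta_1\mu_1\|\partial^k\mathcal{N}\|_{L^2}^2$; the additional positive term $\mu_1\|[\partial^k\Omega,Q]\|_{L^2}^2$ is kept, while the residual cross term $-\mu_2\langle\partial^k A,[\partial^k\Omega,Q]\rangle$ is again absorbed via $\|Q\|_{L^\infty}\lesssim\sqrt{E(t)}$, and $\|\partial^k\dot Q\|_{L^2}^2$ is recovered by $\|\partial^k\dot Q\|^2\leq 2\|\partial^k\mathcal{N}\|^2+2\|[\partial^k\Omega,Q]\|^2$ after a further smallness absorption. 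In both cases one arrives at a dissipation $D(t)\gtrsim \|\nabla u\|_{H^s}^2 + \|\dot Q\|_{H^s}^2$ which, together with the nonlinear remainders bounded by $C\sqrt{E(t)}D(t)+CE(t)^{3/2}$, closes the continuation argument and produces the global estimate \eqref{Ener-Glb}.
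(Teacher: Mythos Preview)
Your handling of the dissipative quadratic form under each of the two hypotheses is essentially correct and matches the paper's treatment in \eqref{Glb-3}--\eqref{Glb-4}. The gap is elsewhere: the dissipation you extract, $D(t)\gtrsim \|\nabla u\|_{H^s}^2+\|\dot Q\|_{H^s}^2$, is not enough to close the bootstrap globally. The nonlinear remainders you describe as ``bounded by $C\sqrt{E(t)}D(t)+CE(t)^{3/2}$'' are genuinely of this shape (see the right-hand side of Lemma~\ref{priori}, where after Young's inequality one gets a term $\big(1+E^2(t)\big)E(t)$), but a residual $E^{3/2}$ on the right cannot be absorbed by your $D(t)$: integrating $\tfrac{d}{dt}E\le C E^{3/2}$ on a bootstrap interval where $E\le M$ only yields $E(t)\le E(0)e^{C\sqrt{M}\,t}$, which is not uniformly bounded in $t$. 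Concretely, the commutator remainders coming from $\Sigma_1$, the transport terms, and the bulk potential $b,c$ all land on factors $\|\nabla Q\|_{H^s}$ or $\|Q\|_{H^s}$ (cf.\ the functional $\mathcal{A}(t)$ in \eqref{A}), and none of these sit inside your $D(t)$.

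The paper closes this by also pairing the $Q$-equation with $\partial^k Q$ (not just $\partial^k\dot Q$); this produces the missing dissipation $L\|\nabla\partial^k Q\|_{L^2}^2+a\|\partial^k Q\|_{L^2}^2$ at the price of an anti-dissipative $-J\|\partial^k\dot Q\|_{L^2}^2$ and a cross energy term. One weights this identity by a small parameter $\eta$ and adds it to the estimate you already have, obtaining the modified energy $\mathcal{E}_\eta(t)$ in \eqref{Ge} (equivalent to $E(t)$ for small $\eta$) and the full dissipation $\mathscr{D}(t)=\|\nabla u\|_{H^s}^2+\|\dot Q\|_{H^s}^2+\|\nabla Q\|_{H^s}^2+\|Q\|_{H^s}^2$. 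Since now $\mathcal{A}(t)\lesssim\mathscr{D}(t)$, every remainder is bounded by $C\big(1+\mathcal{E}_\eta^{1/2}\big)\mathcal{E}_\eta^{1/2}\,\mathscr{D}$ with no leftover $E^{3/2}$ term, and the inequality $\tfrac{d}{dt}\mathcal{E}_\eta+\Lambda_1\mathscr{D}\le C\big(1+\mathcal{E}_\eta^{1/2}\big)\mathcal{E}_\eta^{1/2}\,\mathscr{D}$ closes globally under smallness.
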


\begin{remark}
	In fact, the small global existence of the solution to the system \eqref{CIQS}-\eqref{initial date} has been initially proved by F. De Anna and A. Zarnescu in \cite{DeAnna-Zarnescu-JDE-2018}. However, they further required the following coefficients' constraints:
	\begin{equation}\label{AZ-Asump}
	\begin{aligned}
	\beta_5 + \beta_6 = 0 \,, \ \tilde{\mu}_2 = - \mu_2 \ (\textrm{or} \ \tilde{\mu}_2 = \mu_2 = 0) \,, \ J < J_0 \,, \ \mu_1 > \bar{\mu}_1 \,, \ \beta_4 > \tilde{C}_d
	\end{aligned}
	\end{equation}
	for some computable positive constant $J_0 = J_0 (\mu_1, a, b, c) > 0$, $\bar{\mu}_1 = \bar{\mu}_1 (a, b , c) > 0$ and $\tilde{C}_d = \tilde{C}_d (\tilde{\mu}_2, \beta_5, \beta_6, \mu_2) > 0$. In Theorem \ref{Main-Thm-2} here, the corresponding coefficients' relations are
	\begin{equation}\label{LM-Asump}
	\begin{aligned}
	\beta_5 + \beta_6 \in \R \,, \ J > 0 \,, \ \tilde{\mu}_2, \mu_2 \in \R \,, \ 8 \beta_4 \mu_1 > (\tilde{\mu}_2 - \mu_2)^2 \,.
	\end{aligned}
	\end{equation}
	Namely, when $\tilde{\mu}_2 \neq \mu_2$, the positive lower bound $\beta_4 \mu_1 > \tfrac{1}{8} ( \tilde{\mu}_2 - \mu_2 )^2 > 0$ is required. While $\tilde{\mu}_2 = \mu_2$, the positive lower bounds of the viscosities $\beta_4$ and $\mu_1$ in \eqref{AZ-Asump} are not required.
\end{remark}

We also can derive the energy decay estimate of the system \eqref{CIQS} on the periodic domain $\T^d$. More precisely,

\begin{theorem}[Decay estimate on torus $\T^d$]\label{Main-Thm-Decay}
	Consider the system \eqref{CIQS} on $(t,x) \in \R^+ \times \T^d$. Under the same assumptions in Theorem \ref{Main-Thm-2}, if
	\begin{equation}\label{IC-Avar}
	\begin{aligned}
	\int_{\T^d} u^{in} d x = 0
	\end{aligned}
	\end{equation}
	are further assumed, then there are constants $C_2, C_3 > 0$, depending only on $s$, $d$, all coefficients and initial data, such that the solution $(u, Q)$ constructed in Theorem \ref{Main-Thm-2} subjects to the decay estimate
	\begin{equation}\label{Decay-Bnd}
	\begin{aligned}
	\| u (t) \|^2_{H^s} + \| Q (t) \|^2_{H^{s+1}} + \| \dot{Q} (t) \|^2_{H^s} \leq C_2 E^{in} e^{- C_3 t}  \ ( \, \forall \, t \geq 0 \, ) \,.
	\end{aligned}
	\end{equation}
\end{theorem}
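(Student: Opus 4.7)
The strategy is to enhance the small-data global energy estimate of Theorem \ref{Main-Thm-2} with two extra ingredients: propagating the zero-mean property \eqref{IC-Avar} to enable the Poincar\'e inequality on $\T^d$, and constructing a cross functional involving $\langle \dot Q, Q\rangle_{H^s}$ that injects dissipation into the $\|Q\|^2_{H^{s+1}}$ modes that the baseline energy balance cannot see. With both in hand, a modified Lyapunov functional $\mathcal{F}(t)\sim E(t)$ satisfying $\tfrac{d}{dt}\mathcal{F}+c\mathcal{F}\leq 0$ can be built, and \eqref{Decay-Bnd} follows by Gr\"onwall.

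First I would verify that the zero-mean property is preserved in time: integrating the momentum equation of \eqref{CIQS} over $\T^d$, the pressure, viscosity and stress contributions are pure divergences, and $u\cdot\nabla u=\div(u\otimes u)$ by incompressibility, so $\tfrac{d}{dt}\int_{\T^d}u\,\d x=0$. Hence $u(t,\cdot)$ is mean-free for every $t\geq 0$ and the scalar Poincar\'e estimate $\|u\|_{H^s(\T^d)}\leq C_P\|\nabla u\|_{H^s(\T^d)}$ holds along the flow. Next, rereading the a priori estimate underlying Theorem \ref{Main-Thm-2} (cf.\ Lemma \ref{priori}) under the smallness hypothesis $E^{in}\leq\eps_2$, one obtains the baseline dissipation inequality
\[
\tfrac{d}{dt}E(t)+c_0\bigl(\|\nabla u\|^2_{H^s}+\|\dot Q\|^2_{H^s}\bigr)\leq 0,
\]
the nonlinear contributions having been absorbed thanks to $E(t)\lesssim E^{in}\leq\eps_2$. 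Combined with Poincar\'e this already dissipates $\|u\|^2_{H^s}$ and $\|\dot Q\|^2_{H^s}$, but it does not see $\|\nabla Q\|^2_{H^s}$ or $\|Q\|^2_{H^s}$.

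To supply the missing dissipation, I would test the $H^s$-differentiated $Q$-equation against $Q$. Because $\div u=0$ on $\T^d$, the transport terms in the material derivatives cancel pairwise, yielding
\[
\tfrac{d}{dt}\langle\dot Q,Q\rangle_{H^s}=\langle\ddot Q,Q\rangle_{H^s}+\|\dot Q\|^2_{H^s}+\mathcal{R}(t),
\]
where $\mathcal{R}(t)$ is a commutator remainder of order $\|u\|_{H^s}\|Q\|_{H^s}\|\dot Q\|_{H^s}$. Substituting $J\ddot Q$ from \eqref{CIQS}$_3$, and using $\langle[\Omega,Q],Q\rangle=0$ (antisymmetry of $\Omega$ against symmetry of $Q^2$) together with $\langle|Q|^2 I_d,Q\rangle=0$ (tracelessness of $Q$) at each derivative level, one arrives at
\[
J\tfrac{d}{dt}\langle\dot Q,Q\rangle_{H^s}+L\|\nabla Q\|^2_{H^s}+a\|Q\|^2_{H^s}=J\|\dot Q\|^2_{H^s}-\mu_1\langle\dot Q,Q\rangle_{H^s}+\tfrac{\tilde{\mu}_2}{2}\langle A,Q\rangle_{H^s}+\mathrm{NL}(t),
\]
with $|\mathrm{NL}(t)|\leq C E(t)^{1/2}\bigl(\|\nabla Q\|^2_{H^s}+\|Q\|^2_{H^s}+\|\nabla u\|^2_{H^s}\bigr)$. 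Setting
\[
\mathcal{F}(t):=E(t)+\eta J\langle\dot Q,Q\rangle_{H^s},\qquad 0<\eta\ll 1,
\]
which is equivalent to $E(t)$ for $\eta$ small, combining the two identities, handling $\mu_1\langle\dot Q,Q\rangle$ and $\tfrac{\tilde{\mu}_2}{2}\langle A,Q\rangle$ by Young's inequality, and absorbing $\eta J\|\dot Q\|^2_{H^s}$ into the $c_0\|\dot Q\|^2_{H^s}$ of the baseline, I would obtain
\[
\tfrac{d}{dt}\mathcal{F}(t)+c_1\bigl(\|\nabla u\|^2_{H^s}+\|\dot Q\|^2_{H^s}+\|\nabla Q\|^2_{H^s}+\|Q\|^2_{H^s}\bigr)\leq 0.
\]
Since $\mathcal{F}\sim\|u\|^2_{H^s}+J\|\dot Q\|^2_{H^s}+L\|\nabla Q\|^2_{H^s}+a\|Q\|^2_{H^s}$ (using Poincar\'e once more to promote $\|\nabla u\|^2_{H^s}$ to $\|u\|^2_{H^s}$), the dissipation dominates $c_2\mathcal{F}$ for some $c_2>0$, Gr\"onwall delivers $\mathcal{F}(t)\leq\mathcal{F}(0)e^{-c_2 t}$, and \eqref{Decay-Bnd} follows.

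The hard part is controlling $\mathrm{NL}(t)$ and $\mathcal{R}(t)$ at the $H^s$ level: commuting $\partial^k$ with $|k|=s$ through the nonlinear entries $u\cdot\nabla\dot Q$, $u\cdot\nabla Q$, $[\Omega,Q]$, $bQ^2$, $cQ|Q|^2$ and the stress divergences in \eqref{Sigma1-4} produces Kato--Ponce/Moser-type commutators that must be bounded uniformly in time by $E(t)^{1/2}$ times the dissipation; this rests on $s>d/2+1$ and the Sobolev embedding $H^s\hookrightarrow L^\infty$, and critically on the uniform smallness of $E(t)$ already furnished by Theorem \ref{Main-Thm-2}. A secondary remark is that both dichotomous hypotheses $(\tilde{\mu}_2-\mu_2)^2<8\beta_4\mu_1$ and $\tilde{\mu}_2=\mu_2$ are already encoded in the baseline dissipation produced by Theorem \ref{Main-Thm-2}, so the decay mechanism described above proceeds uniformly in either case without a further case split.
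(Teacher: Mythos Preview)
Your approach is essentially the paper's, with one organizational difference and one minor slip. The cross-functional construction you describe---adding $\eta J\langle\dot Q,Q\rangle_{H^s}$ to $E(t)$ to inject $\|\nabla Q\|^2_{H^s}+\|Q\|^2_{H^s}$ into the dissipation---is precisely the content of Lemma~\ref{Global lemma}, which the paper establishes as part of the \emph{global existence} proof (the modified energy there is written $\mathcal{E}_\eta$ with the equivalent extra term $J\eta\|\dot Q+Q\|^2_{H^s}$). The resulting Lyapunov inequality \eqref{Glb}, namely $\tfrac{d}{dt}\mathcal{E}_\eta+\Lambda_1\mathscr{D}\leq 0$, is therefore already in hand by the time Theorem~\ref{Main-Thm-Decay} is proved, and the paper's decay argument consists only of propagating the mean-zero condition, invoking Poincar\'e to get $\mathcal{E}_\eta\lesssim\mathscr{D}$, and applying Gr\"onwall. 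You are redoing work that Theorem~\ref{Main-Thm-2} has already packaged.

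The slip is the intermediate ``baseline dissipation inequality'' $\tfrac{d}{dt}E+c_0(\|\nabla u\|^2_{H^s}+\|\dot Q\|^2_{H^s})\leq 0$: this does not follow from Lemma~\ref{priori} alone. The nonlinear remainder in \eqref{Ener-Loc} carries a factor $\mathcal{A}^{1/2}(t)$, and $\mathcal{A}(t)$ contains $\|\nabla Q\|^2_{H^s}+\|Q\|^2_{H^s}$, which cannot be absorbed by $D(t)$ even under smallness of $E$; the best one obtains is $\tfrac{d}{dt}E+c_0'D\lesssim E^{1/2}\mathcal{A}$, which is not $\leq 0$. The absorption only works \emph{after} the cross functional has upgraded the dissipation to the full $\mathscr{D}(t)$, since then $\mathcal{A}\lesssim\mathscr{D}$ and the remainder becomes $\lesssim E^{1/2}\mathscr{D}$. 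This does not break your argument---your final combined inequality is correct---but the two steps must be merged rather than done sequentially.
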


\subsection{Previous results and the novelty of this paper}

The analytical study of the inertial Qian-Sheng model of liquid crystal flow started only very recently. The second order material derivative in the inertial model brought tremendous difficulties, so its analytical study is much harder than the corresponding non-inertial case. The first result in this direction was obtained by De Anna and Zarnescu \cite{DeAnna-Zarnescu-JDE-2018}. They proved the global well-posedness under the assumptions that the initial data are small and the coefficients satisfy some further damping property. However, only the case $\beta_5+\beta_6 =0$ was considered. As a consequence, only small data local and global in time could be obtained. In \cite{DeAnna-Zarnescu-JDE-2018}, they also provided an example of twist-wave solutions, which are solutions of the coupled system for which the flow vanishes for all times. Furthermore, for the inviscid version of the inertial Qian-Sheng model, in \cite{Feireisl-Rocca-Schimperna-Zarnescu-2018-JHDE}, Feireisl et al. proved a global existence of the dissipative solution which is inspired from that of incompressible Euler equation defined by P.-L. Lions \cite{Lion-1996-BOOK}. We also mention the recent work of the third named author of this paper on the well-posedness of non-inertial Qian-Sheng model \cite{Ma-2020DCDS}. Comparing to other $Q$-tensor model, such as Beris-Edwards system, one of the main difficulty of Qian-Sheng model is it is hard to find Lyapunov functional, and the energy estimate does not close in $L^2$ sense. Moreover,  the second and the third named authors of this paper proved the same results of the global well-posedness corresponding to Theorem 1.3 in \cite{Luo-Ma-2021-DCDS}, where they started from the low Mach number limits of the compressible inertial Qian-Sheng model for nematic liquid crystals.

There are many works on the $Q$-tensor model without inertial effect but being different with the Qian-Sheng model. For instance, Paicu and Zarnescu \cite{Paicu-Zarnescu-2012-ARMA} proved the global weak solutions of the non-inertial Beris-Edwards system, a $Q$-tensor model. For more $Q$-tensor models of liquid crystals, readers can be referred to the works \cite{ADL-SIMA2014, DeAnna-Zarnescu-CMS-2016, Kirr-Wilkinson-Zarnescu-2014-JSP, Paicu-Zarnescu-2011-SIAM, Xiao-2017-JDE, Zarnescu-TMMA-2012} and the related references therein, for instance.

For the assumption $\beta_5 + \beta_6 = 0$ in \eqref{Entropy-Coeffs-1}, there is a more specialized form $\beta_5 = \beta_6 = 0$ in the physics literatures \cite{PopaNita-Oswald-2002-PRE,PopaNita-Sluckin-Kralj-2005-PRE}. Very recently, Li and Wang \cite{Li-Wang-2019-arXiv} justified rigorously the limit from incompressible inertial Qian-Sheng model to the Ericksen-Leslie theory by employing the Hilbert method, which included the case $\beta_5 + \beta_6 \neq 0$. However,  an extra term $\beta_7 (AQ^2 + Q^2 A)$ was added to treat this case. In the current paper, the case $\beta_5 + \beta_6 \neq 0$ is also included. Our analysis indicates that the $\beta_7$ term is not needed.

The main novelty of this paper is, the roles played by the entropy inequality, Condition (H) in the well-posedness of the Qian-Sheng model \eqref{CIQS} are illustrated. For local in time well-posedness, we make clarification that the large and small data are quite different, which is directly based on entropy inequality and energy dissipation: to obtain large data local in time well-posedness, we require both the entropy inequality and the Condition (H). If any one of these two conditions is not satisfied, we can only obtain small data local existence. When either the entropy inequality, or $\tilde{\mu}_2 = \mu_2$ is satisfied, the local small local in time solutions can be extended globally. Furthermore, in both global and local in time cases, the range of our assumptions on the coefficients are significantly enlarged, comparing to previous results.

We finally mention that one of the main motivation of Qian-Sheng model \cite{Qian-Sheng-PRE-1998} was to provide a $Q$-tensor version of the classical Ericksen-Leslie system. For the inertial Ericksen-Leslie system, there has been recently some progress in the context of smooth solutions, see \cite{GJLLT-2019, HJLZ-1, HJLZ-2, JL-SIAMJMA-2019, JLT-M3AS-2019}, in which the basic energy dissipation law (Lyapunov functional) is equivalent to entropy inequality. It will be an interesting question to relate the entropy inequalities and energy dissipation in the Qian-Sheng and Ericksen-Leslie models. This will be a future work.


\subsection{Main ideas and sketch of proofs}

One of the important assumptions in the above theorems is $a > 0$. This captures a regime of physical interest but not the most interesting physical regime (which would be for $a \leq 0$, meaning the ``deep nematic" regime, see \cite{Mottram-2014-arXiv}). Technically the assumption $a > 0$ provides an additional damping effect $a Q$, which play an essential role in deriving the global energy estimates.

From the $L^2\mbox{-}$estimate (not closed) \eqref{L2 sums} in Section \ref{Sec: Apriori}, we know that
\begin{align*}
	& \tfrac{1}{2} \tfrac{d}{d t} \big( \| u \|^2_{L^2} + J \| \dot{Q} \|^2_{L^2} + L \| \nabla Q \|^2_{L^2} + a \| Q \|^2_{L^2} \big)
	+ \beta_1 \| Q : A \|^2_{L^2} \\
	&+ \int_{\R^d} F ( \nabla u , \dot{Q} , [\Omega, Q] ) d x + (\beta_5 + \beta_6 ) \langle AQ, A \rangle \leq \textrm{ some other terms} \,,
\end{align*}
which has corresponding version of higher order derivatives' estimates. It is easy to see that the Condition (H) on the viscosities $\beta_4, \mu_1 > 0, \tilde{\mu}_2, \mu_2 \in \R$ given in \eqref{Condition-H} exactly supplies the {\em coercivity} of dissipative functional $F(\cdot, \cdot, \cdot)$. More precisely, the effective dissipative structures resulted from $\int_{\R^d} F ( \nabla u , \dot{Q} , [\Omega, Q] ) d x$ are $\| \nabla u \|^2_{L^2}$ and $ \| \dot{Q} \|^2_{L^2} $.

If $\beta_5 + \beta_6 \neq 0$, the quantity $(\beta_5 + \beta_6) \langle A Q , A \rangle$ can only be bounded by the quantity $\| Q \|_{H^s} \| \nabla u \|^2_{H^s}$, which can only be absorbed by the dissipation $\| \nabla u \|^2_{H^s}$ when the energy $\| Q \|_{H^s}$ is sufficiently small. Consequently, as shown in Theorem \ref{Main-Thm-1-prime}, we will merely construct a unique local solution with small energy for the case $\beta_5 + \beta_6 \neq 0$. We emphasize that the structure $ ( \beta_5 + \beta_6 ) \langle A Q , A \rangle $ will not result to any difficulty when we prove the global well-posedness with small initial data. Oppositely, if the term $( \beta_5 + \beta_6 ) \langle A Q, A \rangle$ vanishes, namely, $\beta_5 + \beta_6 = 0$, we can construct the local solution with large initial data and the global solution with small initial data provided that the viscosities $\beta_4, \mu_1 > 0, \tilde{\mu}_2, \mu_2 \in \R$ satisfy the Condition (H).

Moreover, even the viscosities $\beta_4, \mu_1 > 0, \tilde{\mu}_2, \mu_2 \in \R$ does not satisfy the Condition (H) under $\beta_5 + \beta_6 =0$ and $\tilde{\mu}_2 = \mu_2 \neq 0$, we can also prove the local well-posedness under small initial data. However, proving the large local solution fails in that case. One notices that $\beta_5 + \beta_6 = 0$ and $\tilde{\mu}_2 - \mu_2 \neq 0$ exactly obey the entropy inequality
\begin{equation}\label{Entropy4}
\begin{aligned}
\int_{\R^d} \! ( \sigma'_{ij} A_{ij} - g'_{\alpha \beta} \mathscr{N}_{\alpha \beta} ) d x \geq 0 \,,
\end{aligned}
\end{equation}
derived from the relation \eqref{Entropy3}, which is often regarded as a physical condition. But why does not we construct the large local solution under the coefficients constraints $\beta_5 + \beta_6 = 0$ and $\tilde{\mu}_2 - \mu_2 \neq 0$ without the Condition (H)? It results from the asymmetric structure $\beta_5 Q_{jl} A_{li} + \beta_6 Q_{il} A_{lj}$ given in the viscous stress $\sigma'_{ij}$, i.e., \eqref{sigma-prime}. One notices that
\begin{equation}\label{Decomp-beta5+beta6}
\begin{aligned}
\beta_5 Q_{jl} A_{li} + \beta_6 Q_{il} A_{lj} = & \tfrac{1}{2} ( \beta_5 + \beta_6 ) ( Q_{jl} A_{li} + Q_{il} A_{lj} ) - \tfrac{1}{2} (\beta_6 - \beta_5) ( Q_{jl} A_{li} - Q_{il} A_{lj} ) \\
= & \underbrace{ \tfrac{1}{2} ( \beta_5 + \beta_6 ) ( Q_{jl} A_{li} + Q_{il} A_{lj} ) }_{\textrm{ Symmetric part }} \ \underbrace{ - \tfrac{1}{2} \mu_2 ( Q_{jl} A_{li} - Q_{il} A_{lj} ) }_{ \textrm{ Skew-symmetric part } } \,,
\end{aligned}
\end{equation}
where the last equality is implied by the Parodi's relation \eqref{Parodi-Rlt}. Under $\beta_5 + \beta_6 = 0$, the symmetric part will automatically be zero. Furthermore, in the inequality \eqref{Entropy4}, $\sigma_{ij}$ is multiplied by the {\em symmetric tensor} $A_{ij}$, which is such that the skew-symmetric part of $\beta_5 Q_{jl} A_{li} + \beta_6 Q_{il} A_{lj}$ vanishes. Consequently, the term $\beta_5 Q_{jl} A_{li} + \beta_6 Q_{il} A_{lj}$ will not affect the positivity of whole entropy inequality \eqref{Entropy4}. However, we multiply by $u$ in the $u$-equation of \eqref{CIQS} and integrate over $x \in \R^d$, which tells us that the term $\beta_5 Q_{jl} A_{li} + \beta_6 Q_{il} A_{lj}$ actually multiplies by the {\em symmetric tensor} $\nabla u$ after the integration by parts. Then, the skew-symmetric part in \eqref{Decomp-beta5+beta6} will not vanish, which results to a quantity $ \tfrac{1}{2} \mu_2 \langle [\Omega, Q] , \nabla u \rangle $. This quantity is contained in the expression of $ \int_{\R^d} F(\nabla u, \dot{Q}, [\Omega, Q]) d x $. Recall that
\begin{equation*}
	\begin{aligned}
		F(\nabla u, \dot{Q}, [\Omega, Q]) = \tfrac{1}{2} \beta_4 |\nabla u|^2 + \mu_1 ( |\dot{Q}|^2 + |[\Omega, Q]|^2 ) \\
		- \tfrac{1}{2} (\tilde{\mu}_2 - \mu_2) \nabla u : \dot{Q}- \mu_2 \nabla u : [\Omega, Q]  \,.
	\end{aligned}
\end{equation*}
If $\tilde{\mu}_2 = \mu_2 \neq 0$ and $\beta_4, \mu_1 > 0, \tilde{\mu}_2, \mu_2 \in \R$ do not obey the Condition (H), there will a unsigned quantity $- \mu_2 \langle \nabla u, [\Omega, Q] \rangle$ in $F (\nabla u, \dot{Q}, [\Omega, Q])$, which break down the coercivity of $F$. On the other hand, the quantity $- \mu_2 \langle \nabla u, [\Omega, Q] \rangle$ will be bounded by $\| Q \|_{H^s} \| \nabla u \|^2_{H^s}$, which can be absorbed by the dissipation $\| \nabla u \|^2_{H^s}$ only provided that the norm $\| Q \|_{H^s}$ is sufficiently small. In this sense, we can only prove the small local solution in the case $\tilde{\mu}_2 = \mu_2 \neq 0$ and $\beta_5 + \beta_6 = 0$. Of course, the quantity $- \mu_2 \langle \nabla u, [\Omega, Q] \rangle$ is easy to be dealt with when prove the global well-posedness with small initial data, so that this small local solution can be globally extended.

When we derive the a priori energy estimates, the basic cancellation
\begin{equation*}
	\begin{aligned}
		\langle  \div (-\nabla Q\odot\nabla Q) ,  u  \rangle + \langle \Delta Q, u \cdot \nabla Q \rangle=0
	\end{aligned}
\end{equation*}
is crucial. Based on the a priori estimate (see Lemma \ref{priori} below), we employ the the mollifier scheme and the continuity arguments to prove the all kinds of local results in Theorem \ref{Main-Thm-1} and Theorem \ref{Main-Thm-1-prime}. In order to globally extend the local solutions constructed in Theorem \ref{Main-Thm-1}- \ref{Main-Thm-1-prime}, we need seek more dissipative structures  $\| \nabla Q \|^2_{H^s}$ and $\| Q \|^2_{H^s}$. Then, with sufficiently small initial data, we extend globally the local solutions constructed above by applying the continuity arguments.


\subsection{Organizations of this paper}

This paper is organized as follows: in the next section, we derive the a priori estimates of the incompressible inertial Qian-Sheng model. Based on the a priori estimates, we prove the local well-posedness (including the large initial solution under the Condition (H) and the small initial data solution when the Condition (H) failed) by employing the mollifier method in Section \ref{Sec: Local}. Finally, in Section \ref{Sec: Global}, we prove the global classical solution and verify the time decay estimate for the system \eqref{CIQS} considered on the torus $\T^d$.

\section{A priori estimates}\label{Sec: Apriori}

In this section, we aim at deriving the a priori estimate of the system \eqref{CIQS}-\eqref{initial date} associated with the $H^s$-norms for $s > \tfrac{d}{2} + 1$.

\subsection{Preliminaries} We first give a known lemma, which will be frequently used in the current paper.

\begin{lemma}[Moser-type inequality, \cite{Majda-Bertozzi-2002-BOOK}]\label{Lemma3}
	For functions $ f , g \in H^s \bigcap L^{\infty}, s \in \mathbb{Z}_{+}\bigcup \{0\}$, then for any multi-index $ \alpha  = ( \alpha_1,\alpha_2,...,\alpha_d ) \in \mathbb{N} ^d$ and
	$ 1 \leq | \alpha | \leq s $, we have
	\begin{equation}
	\begin{aligned}
	\| \partial ^ {\alpha} ( fg) \|_{L^2} \lesssim & \| f \|_{L^ \infty} \| g \|_{\dot{H}^s} + \| f \|_{\dot{H}^s } \| g \|_{L^ \infty} \,, \\
	\| [\partial^{\alpha}, f] g \|_{L^{2}}
	\lesssim & ( \| \nabla f \|_{L^{\infty}} \| g \|_{H^{s-1}}
	+ \|  f \|_{\dot{H}^s} \| g \|_{L^{\infty}} ) \,.
	\end{aligned}
	\end{equation}
	In particular, if $ s > \tfrac{d}{2}$, then  $f,g \in H^s $, we have
	\begin{equation}
	\begin{aligned}
	\| f g \|_{H^s} \lesssim \|f\|_{H^s} \|g\|_{H^s} \,.
	\end{aligned}
	\end{equation}
\end{lemma}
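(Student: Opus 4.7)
All three inequalities are instances of the classical Moser--Kato--Ponce calculus, so the plan is to combine Leibniz's rule with Gagliardo--Nirenberg interpolation and H\"older's inequality. For the product estimate, I would expand
\begin{equation*}
\partial^{\alpha}(fg) = \sum_{\beta \leq \alpha} \binom{\alpha}{\beta} \partial^{\beta} f \, \partial^{\alpha-\beta} g,
\end{equation*}
and bound each summand by H\"older with the symmetric exponents $p_{\beta} = \frac{2|\alpha|}{|\beta|}$ and $q_{\beta} = \frac{2|\alpha|}{|\alpha|-|\beta|}$, interpreted as $\infty$ at the endpoints. The Gagliardo--Nirenberg inequality on $\R^d$ furnishes
\begin{equation*}
\|\partial^{\beta} f\|_{L^{p_{\beta}}} \lesssim \|f\|_{L^{\infty}}^{1-|\beta|/|\alpha|} \|f\|_{\dot{H}^{|\alpha|}}^{|\beta|/|\alpha|}, \qquad \|\partial^{\alpha-\beta} g\|_{L^{q_{\beta}}} \lesssim \|g\|_{L^{\infty}}^{|\beta|/|\alpha|} \|g\|_{\dot{H}^{|\alpha|}}^{1-|\beta|/|\alpha|},
\end{equation*}
and one application of Young's inequality with the conjugate pair $(|\alpha|/|\beta|,\, |\alpha|/(|\alpha|-|\beta|))$ collapses the resulting product of powers into $\|f\|_{L^{\infty}}\|g\|_{\dot{H}^{|\alpha|}} + \|f\|_{\dot{H}^{|\alpha|}}\|g\|_{L^{\infty}}$, which is then absorbed into the claimed right-hand side using $\|\cdot\|_{\dot{H}^{|\alpha|}} \leq \|\cdot\|_{H^{s}}$ whenever $|\alpha|\leq s$.

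For the commutator estimate, the key observation is the cancellation
\begin{equation*}
[\partial^{\alpha},f] g = \partial^{\alpha}(fg) - f\,\partial^{\alpha} g = \sum_{0 < \beta \leq \alpha} \binom{\alpha}{\beta} \partial^{\beta} f \, \partial^{\alpha-\beta} g,
\end{equation*}
which removes the $\beta=0$ term, so every remaining summand has $|\beta|\geq 1$ and can be rewritten as $\partial^{\beta-e_{i}}(\partial_{i} f)\,\partial^{\alpha-\beta}g$ for a suitable unit multi-index $e_i$. I would then run the same H\"older / Gagliardo--Nirenberg chain as before but centered on $\nabla f$ at scale $|\alpha|-1$, which is precisely what trades the $\|f\|_{L^{\infty}}$ factor for $\|\nabla f\|_{L^{\infty}}$ and drops the $g$-norm to $H^{s-1}$ on one of the two ``endpoints'' of the bilinear estimate. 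Finally, when $s > \tfrac{d}{2}$ the Sobolev embedding $H^{s}(\R^{d})\hookrightarrow L^{\infty}(\R^{d})$ upgrades the $L^{\infty}$-factors on the right-hand side of the first estimate to $H^{s}$-norms, and summing over $1\leq |\alpha|\leq s$ together with the trivial bound $\|fg\|_{L^{2}}\leq \|f\|_{L^{\infty}}\|g\|_{L^{2}}$ for $|\alpha|=0$ yields the algebra property $\|fg\|_{H^{s}}\lesssim \|f\|_{H^{s}}\|g\|_{H^{s}}$.

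The only book-keeping care required is to verify that the H\"older exponents $(p_{\beta},q_{\beta})$ lie in $[2,\infty]$ and that the corresponding Gagliardo--Nirenberg exponents are admissible on $\R^d$; both conditions hold automatically for every $0\leq |\beta|\leq |\alpha|$, so there is no genuine obstacle. This is precisely the proof carried out in full detail in Majda--Bertozzi \cite{Majda-Bertozzi-2002-BOOK}, and the present lemma is quoted rather than re-proved for that reason.
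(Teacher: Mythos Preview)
Your proof strategy is correct and is precisely the standard argument given in Majda--Bertozzi \cite{Majda-Bertozzi-2002-BOOK}. The paper itself does not supply any proof of this lemma at all: it is stated as a known result with a citation, so there is nothing to compare against beyond noting that your Leibniz--H\"older--Gagliardo--Nirenberg outline is exactly the textbook derivation being invoked.
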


\subsection{A priori estimate.}\label{ape}
In this subsection, we derive the a priori estimate of the system \eqref{CIQS}-\eqref{initial date}. For convenience, we first introduce the following energy dissipation $D (t)$:
\begin{equation}\label{Local-Dissipation}
\begin{aligned}
D(t) = \| \nabla u \|^{2}_{H^{s}} + \| \dot{Q} \|^2_{H^s} + \beta_1 \sum_{|k|=0}^s \| \partial^k A : Q \|^2_{L^2} \,.
\end{aligned}
\end{equation}
Then we will give the following lemma about the a priori estimate of the system \eqref{CIQS}-\eqref{initial date}.

\begin{lemma}\label{priori}
	Let integer $ s > \tfrac{d}{2} + 1 $ ($d=2,3$). Assume $(u, Q ) $ is a sufficiently smooth solution to system \eqref{CIQS}-\eqref{initial date} on some interval [0,T]. Then there exists a constant $ c_0 > 0 $, depending only on the all coefficients, $s$ and $d$, such that for all $ t \in [ 0 , T ]$,
	\begin{equation}\label{Ener-Loc}
	\begin{aligned}
	\tfrac{1}{2} \tfrac{d}{d t} E (t) + c_0 D (t) \lesssim & ( \delta_H + |\beta_5 + \beta_6| ) E^\frac{1}{2} (t) D (t)     \\
	& + (1+E^{\frac{1}{2}}(t)) E^{\frac{1}{2}}(t)D^{\frac{1}{2}}(t)\mathcal{A}^{\frac{1}{2}}(t) + \delta_H |\tilde{\mu}_2 - \mu_2|^2 E (t) \,,
	\end{aligned}
	\end{equation}
	where $E (t)$ is defined in \eqref{Ener} and the functional $\mathcal{A}(t)$ is defined as
	\begin{equation}\label{A}
	\begin{aligned}
	\mathcal{A}(t) = \|u\|^2_{\dot{H}^s} +\|\dot{Q}\|^2_{H^s}+ \|\nabla Q\|^2_{H^s} + \|Q\|^2_{H^ s} \,,
	\end{aligned}
	\end{equation}
	and
	\begin{equation*}
		\delta_H =
		\left\{
		\begin{array}{l}
			0 \,, \ \textrm{ if the Condition (H) holds} \,, \\
			1 \,, \ \textrm{ if the Condition (H) does not hold} \,.
		\end{array}
		\right.
	\end{equation*}
\end{lemma}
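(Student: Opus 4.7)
The plan is to carry out an $H^s$ energy estimate on the coupled system \eqref{CIQS}: for each multi-index $k$ with $|k|\le s$, apply $\partial^k$ and test the momentum equation against $\partial^k u$ and the $Q$-equation against $\partial^k \dot Q$. The reason for testing with $\dot Q$ (rather than $\partial_t Q$) is that it both converts the inertial term $J\ddot Q$ into $\tfrac{J}{2}\tfrac{d}{dt}\|\dot Q\|_{L^2}^2$ and produces the dissipation $\mu_1\|\dot Q\|_{L^2}^2$; summing over $k$, the leading time-derivative pieces reassemble exactly into $\tfrac{1}{2}\tfrac{d}{dt}E(t)$ after using $\div u=0$. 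The Ericksen-stress cancellation $\langle\div(-\nabla Q\odot\nabla Q),u\rangle+\langle\Delta Q,u\cdot\nabla Q\rangle=0$ absorbs the stress $\Sigma_1$; at higher order it survives $\partial^k$ modulo commutators bounded by Lemma \ref{Lemma3}.

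Collecting the viscous couplings on the left-hand side produces the sum $\sum_{|k|\le s}\int_{\R^d}F(\partial^k\nabla u,\partial^k\dot Q,\partial^k[\Omega,Q])\,dx + \beta_1\sum_{|k|\le s}\|\partial^k A:Q\|_{L^2}^2 + (\beta_5+\beta_6)$-type couplings, plus commutator remainders. When Condition (H) \eqref{CH-2} holds, the coercivity \eqref{Condition-H} immediately extracts the dissipation $c_0 D(t)$, which accounts for the case $\delta_H=0$. When (H) fails, only the weaker entropy-inequality coercivity \eqref{Entropy-Coeffs-2} is available; it controls $\partial^k\nabla u$ and $\partial^k\mathscr{N}$ but not $\partial^k\dot Q$ directly. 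We recover the latter via $|\dot Q|^2\le 2|\mathscr{N}|^2+2|[\Omega,Q]|^2$ and handle the leftover cross-term $-\mu_2\nabla u:[\Omega,Q]$ by Young's inequality, producing exactly the residual $\delta_H|\tilde{\mu}_2-\mu_2|^2 E(t)$ appearing in \eqref{Ener-Loc}. The symmetric piece $(\beta_5+\beta_6)\langle AQ,A\rangle$, which is outside any coercive structure, is bounded by $|\beta_5+\beta_6|\|Q\|_{H^s}\|\nabla u\|_{H^s}^2 \lesssim |\beta_5+\beta_6|E^{1/2}D$, matching the corresponding factor on the right-hand side.

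The remaining terms, namely the polynomial nonlinearities $bQ^2$ and $c|Q|^2Q$, the transport commutators $[\partial^k,u\cdot\nabla]u$ and $[\partial^k,u\cdot\nabla]\dot Q$, the higher-order contributions of $Q\,\mathrm{tr}(QA)$ and $[\Omega,Q]$, and the commutator residues from the Ericksen cancellation, are controlled by Moser-type bounds (Lemma \ref{Lemma3}) together with the Sobolev embedding $H^s\hookrightarrow L^\infty$ for $s>\tfrac{d}{2}+1$. A typical term is bounded by $(1+E^{1/2})E^{1/2}D^{1/2}\mathcal{A}^{1/2}$, with the factor $D^{1/2}$ coming from a top-order $\nabla u$ or $\dot Q$, with $\mathcal{A}^{1/2}$ absorbing the non-dissipative norms $\|\nabla Q\|_{H^s}$ and $\|Q\|_{H^s}$, and with the extra $E^{1/2}$ prefactor originating from the quintic $c|Q|^2Q$ contributions, which accounts for the $(1+E^{1/2})$ factor in \eqref{Ener-Loc}.

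The main obstacle is the delicate accounting of the skew-symmetric coupling $-\mu_2\nabla u:[\Omega,Q]$ exhibited in \eqref{Decomp-beta5+beta6}: it is invisible to the entropy inequality (which pairs the viscous stress with the symmetric $A$) but unavoidable when the momentum equation is tested against $u$, because the symmetric part of $\nabla u$ then pairs with the skew piece of $\beta_5 Q_{jl}A_{li}+\beta_6 Q_{il}A_{lj}$. This is precisely why Condition (H) is strictly stronger than the entropy condition \eqref{Entropy-Coeffs-2} and precisely why its failure costs the explicit $|\tilde{\mu}_2-\mu_2|^2 E(t)$ remainder. The secondary technical check is that at each order $|k|\le s$ the commutators $[\partial^k,Q]\Omega$ and $[\partial^k,\Omega]Q$ are genuinely lower-order, so that the coercive quadratic form $F$ survives differentiation and the coefficient-by-coefficient accounting described above remains valid.
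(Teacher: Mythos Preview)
Your overall strategy---test $\partial^k$ of the momentum equation against $\partial^k u$, the $Q$-equation against $\partial^k\dot Q$, invoke the Ericksen cancellation, and isolate the quadratic form $F(\nabla\partial^k u,\partial^k\dot Q,[\partial^k\Omega,Q])$---matches the paper exactly, and your handling of the commutator remainders, the $(\beta_5+\beta_6)$ term, and the polynomial nonlinearities is correct.

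There is, however, a genuine gap in your treatment of the case $\delta_H=1$. You write that ``only the weaker entropy-inequality coercivity \eqref{Entropy-Coeffs-2} is available'' and propose to use it on $(\nabla u,\mathscr{N})$. But the hypotheses of Lemma~\ref{priori} are only \eqref{Coeffs-Rlt-all}; the entropy inequality \eqref{H} (equivalently \eqref{Entropy-Coeffs-2}) is \emph{not} assumed. When Condition~(H) fails, the entropy coercivity may fail as well, so you cannot invoke it. Second, your bookkeeping of the residuals is off: bounding $-\mu_2\nabla u:[\Omega,Q]$ by Young's inequality does not produce a factor $|\tilde\mu_2-\mu_2|^2$; that factor can only come from the \emph{other} cross term $-\tfrac12(\tilde\mu_2-\mu_2)\nabla u:\dot Q$.

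The paper's route in the $\delta_H=1$ case is simpler and requires no coercivity hypothesis at all: one keeps the diagonal part $\tfrac{\beta_4}{2}\|\nabla\partial^k u\|_{L^2}^2+\mu_1\|\partial^k\dot Q\|_{L^2}^2$ of $F$ and bounds the two cross terms separately. The term $-\tfrac12(\tilde\mu_2-\mu_2)\langle\nabla\partial^k u,\partial^k\dot Q\rangle$ is handled by Young's inequality, yielding $\eps\|\nabla u\|_{H^s}^2+C|\tilde\mu_2-\mu_2|^2\|\dot Q\|_{H^s}^2\le \eps\|\nabla u\|_{H^s}^2+C|\tilde\mu_2-\mu_2|^2E(t)$, which is the source of the $\delta_H|\tilde\mu_2-\mu_2|^2E(t)$ term. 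The term $-\mu_2\langle\nabla\partial^k u,[\partial^k\Omega,Q]\rangle$ is bounded trilinearly by $|\mu_2|\|Q\|_{H^s}\|\nabla u\|_{H^s}^2\lesssim E^{1/2}D$, which accounts for the $\delta_H$ prefactor on $E^{1/2}D$ in \eqref{Ener-Loc}. With $\eps<\tfrac14\beta_4$ the diagonal still dominates and yields $c_0 D(t)$.
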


\begin{proof} We divide the proof into two steps.
	
	\vspace*{2mm}
	
	{\em Step 1. $L^{2}$-estimate.}
	
	\vspace*{2mm}
	
	First, taking $ L^{2} $-inner product with $ u $ in the first equation of system \eqref{CIQS}, we obtain
	\begin{equation}\label{L2 2}
	\begin{aligned}
	\tfrac{1}{2} \tfrac{d}{d t} \| u \|^2_{L^2} + \tfrac{\beta_{4}}{2} \| \nabla u \|^2_{L^{2}}
	=  \langle \div \Sigma_{1} , u \rangle + \langle \div \Sigma_{2} , u \rangle + \langle \div \Sigma_{3} , u \rangle\,.
	\end{aligned}
	\end{equation}
	Second, taking $ L^{2} $-inner product with $ \dot{Q} $ in the third equation of system \eqref{CIQS}, we can deduce that
	\begin{equation}\label{L2 3}
	\begin{aligned}
	&  \tfrac{1}{2} \tfrac{d}{d t} \big( J \| \dot{Q} \|^{2}_{L^2} + L \| \nabla Q \|^{2}_{L^2} + a \| Q \|^{2}_{L^2} \big) + \mu_{1} \| \dot{Q} \|^2_{L^{2}} \\
	= & L \langle \Delta Q , u \cdot \nabla Q \rangle  + b \langle ( Q^{2} - \tfrac{1}{d} | Q |^{2} I_{d} ) , \dot{Q} \rangle  - c \langle Q | Q |^{2} , \dot{Q} \rangle \\
	&+ \tfrac{\tilde{\mu}_{2}}{2} \langle A , \dot{Q} \rangle + \mu_{1} \langle [ \Omega , Q ] , \dot{Q} \rangle \,.
	\end{aligned}
	\end{equation}
	Therefore, combining the above equalities \eqref{L2 2} and \eqref{L2 3}, we know
	\begin{align}\label{L2 sum}
		\no & \tfrac{1}{2} \tfrac{d}{dt} \big(  \| u \|^2_{L^2} + J \| \dot{Q} \|^2_{L^2} + L \| \nabla Q \|^2_{L^2} + a \| Q \|^{2}_{L^2} \big) + \tfrac{\beta_{4}}{2}  \|\nabla u \|^2_{L^{2}} + \mu_{1}  \| \dot{Q} \|^2_{L^{2}} \\
		\no = & \underbrace{  \langle  \div \Sigma_{1} ,  u  \rangle +L \langle \Delta Q, u \cdot \nabla Q \rangle }_{H_1} \\
		\no &  +\langle \div\Sigma_{2}, u \rangle + \tfrac{\mu_{2}}{2} \langle \div \dot{Q} , u \rangle -\tfrac{\mu_{2}}{2} \langle \div ([\Omega , Q ]) , u \rangle  \\
		\no& \underbrace{\ \ \ \ \ \ \ \ \ \ \ \ \ \ \ \ \  -\mu_{1} \langle \div ([Q ,[\Omega , Q ]]) , u \rangle  + \tfrac{\tilde{\mu}_{2}}{2} \langle A   , \dot{Q} \rangle \ }_{H_2}\\
		&\underbrace{+b \langle (Q^{2} - \tfrac{1}{d} | Q |^{2} I_{d}), \dot{Q} \rangle - c \langle Q | Q |^{2},\dot{Q}\rangle
			+ \mu_{1} \langle \div \big([Q , \dot{Q}] \big) , u \rangle + \mu_{1} \langle [\Omega,Q], \dot{Q} \rangle }_{H_3}\,.
	\end{align}
	
	Now we compute each term on the right-hand side of the equality \eqref{L2 sum}.
	Simple calculation tells us that
	\begin{equation}\nonumber
	\begin{aligned}
	H_{1} = -\langle  \div (\nabla Q\odot\nabla Q) ,  u  \rangle
	+ \langle \Delta Q, u \cdot \nabla Q \rangle=0     \,.
	\end{aligned}
	\end{equation}
	For calculating the second part $H_2$, we will repeatedly use that $\nabla u=A+\Omega$ and the fact $\tr(BC)=0$ for any symmetric matrix $B$ and skew-adjoint matrix $C$ (i.e., $ C_{ij}=-C_{ji}$). Recalling the definition of $\Sigma_{3}$ defined in \eqref{Sigma1-4}, we have
	\begin{equation}\nonumber
	\begin{aligned}
	H_{2} = & - \beta_{1} \langle Q \tr(QA) , \nabla u \rangle -\beta_{5} \langle A Q , \nabla u \rangle -\beta_{6} \langle Q A,  \nabla u \rangle + \tfrac{\mu_{2}}{2} \langle [ \Omega , Q ],  \nabla u \rangle \\
	& - \tfrac{\mu_{2}}{2} \langle \dot{Q} , \nabla u \rangle + \tfrac{\tilde{\mu}_{2}}{2} \langle A  , \dot{Q} \rangle + \mu_{1} \langle [Q, [ \Omega , Q ] ], \nabla u \rangle \,.
	\end{aligned}
	\end{equation}
	We begin with
	\begin{equation}\nonumber
	\begin{aligned}
	-\beta_{1} \langle  Q \tr(QA) , \nabla u \rangle
	=&
	-\beta_{1} \int_{\mathbb{R}^d} Q_{ij} Q_{lk} A_{kl} ( A_{ij} + \Omega_{ij}) dx\\
	=&
	-\beta_{1} \int_{\mathbb{R}^d} Q_{ij} Q_{lk} A_{kl} A_{ij} dx
	-\beta_{1} \int_{\mathbb{R}^d} Q_{ij} Q_{lk} A_{kl} \Omega_{ij} dx\\
	=&
	-\beta_{1} \int_{\mathbb{R}^d} (Q : A )^{2} dx
	-\beta_{1} \int_{\mathbb{R}^d} (Q : \Omega ) (Q : A ) dx\\
	=&
	-\beta_{1} \|  Q : A  \|^{2}_{L^{2}} \, ,
	\end{aligned}
	\end{equation}
	where we use of the relation $Q:\Omega=0$. Moreover, we have
	\begin{align*}
		&-\beta_{5} \langle A Q , \nabla u \rangle
		-\beta_{6} \langle Q A,  \nabla u \rangle
		+\tfrac{\mu_{2}}{2} \langle [ \Omega , Q ],  \nabla u \rangle\\
		=&
		-\beta_{5} \int_{\mathbb{R}^d} \tr\{ Q A (A + \Omega)\}dx
		-\beta_{6} \int_{\mathbb{R}^d} \tr\{ A Q (A + \Omega) \}dx \\
		&
		+\tfrac{\mu_{2}}{2}\int_{\mathbb{R}^d} \tr\{[\Omega,Q](A+\Omega)\}dx
	\end{align*}
	\begin{align*}
		=&
		-\beta_{5}\int_{\mathbb{R}^d} \tr\{( Q A + A Q )( A + \Omega )\}dx
		-(\beta_{6}-\beta_{5})\int_{\mathbb{R}^d} \tr\{ A Q ( A + \Omega )\}dx\\
		&
		+\tfrac{\mu_{2}}{2}  \int_{\mathbb{R}^d} \tr\{[ \Omega , Q ]( A + \Omega )\}dx\\
		=&
		-\beta_{5}\int_{\mathbb{R}^d} \tr\{( Q A + A Q ) A \} dx
		-(\beta_{6}-\beta_{5}) \int_{\mathbb{R}^d} \tr\{AQ(A+\Omega)\} dx\\
		&
		+\tfrac{\mu_{2}}{2} \int_{\mathbb{R}^d} \tr\{[\Omega , Q]( A + \Omega )\} dx\\
		=&
		-(\beta_{5} + \beta_{6}) \int_{\mathbb{R}^d} \tr( A Q A ) dx
		-(\beta_{6}- \beta_{5}) \int_{\mathbb{R}^d} \tr( A Q \Omega ) dx
		+\tfrac{\mu_{2}}{2}  \int_{\mathbb{R}^d} \tr( A [ \Omega , Q ] ) dx\\
		=&
		-(\beta_{5} + \beta_{6}) \langle  A Q, A \rangle   +
		\mu_{2} \langle  A , [ \Omega , Q ]  \rangle \,,
	\end{align*}
	where we use the Parodi's ration $\beta_{6}-\beta_{5}=\mu_{2}$ and
	\begin{equation}\nonumber
	\begin{aligned}
	\int_{\mathbb{R}^d} \tr( A Q \Omega ) dx =
	- \int_{\mathbb{R}^d} \tr( A \Omega Q) dx\,.
	\end{aligned}
	\end{equation}	
	Furthermore, from the facts $\tr\dot{Q}=0$ and $\nabla u =A+\Omega$, we deduce that
	\begin{equation}
	\begin{aligned}
	-\tfrac{\mu_{2}}{2} \langle \dot{Q} ,\nabla u \rangle
	+\tfrac{\tilde{\mu}_{2}}{2} \langle A  ,\dot{Q} \rangle
	=-\tfrac{\mu_{2}}{2} \langle \dot{Q} , A \rangle
	+\tfrac{\tilde{\mu}_{2}}{2} \langle  A ,\dot{Q} \rangle
	= \tfrac{1}{2} ( \tilde{\mu}_{2} -  \mu_{2})
	\langle  A ,\dot{Q} \rangle \,.
	\end{aligned}
	\end{equation}
	Finally, we calculate that
	\begin{equation}\nonumber
	\begin{aligned}
	\mu_{1} \langle [Q, [ \Omega , Q ] ], \nabla u \rangle
	=&
	\mu_{1} \int_{\mathbb{R}^d} \tr\{( [\Omega , Q ] Q - Q [ \Omega, Q])(A + \Omega)\} dx\\
	=&
	\mu_{1} \int_{\mathbb{R}^d} ([ \Omega , Q]_{ik} Q_{kj} - Q_{ik} [ \Omega , Q]_{kj}) \Omega_{ji} dx\\
	=&
	\mu_{1} \int_{\mathbb{R}^d} (Q_{kj} \Omega_{ji} [ \Omega, Q ]_{ik} - \Omega_{kj} Q_{ji} [\Omega, Q]_{ik} ) dx\\
	=&
	-\mu_{1} \| [\Omega , Q ]  \|^2_{L^{2}} \,.
	\end{aligned}
	\end{equation}
	Combining the all above estimates reduces to
	\begin{equation}\nonumber
	\begin{aligned}
	H_{2} -\mu_{1} \| \dot{Q} \|^{2}_{L^{2}} - & \tfrac{\beta_{4}}{2}\| \nabla u \|^2_{L^2} \\
	= &
	-\beta_{1} \| Q : A \|^{2}_{L^{2}}
	- \int_{\R^d} F ( \nabla u, \dot{Q} , [\Omega, Q] ) d x - ( \beta_{5} + \beta_{6} ) \langle A Q, A \rangle
	\,,
	\end{aligned}
	\end{equation}
	where the functional $F( \cdot, \cdot, \cdot )$ is defined in \eqref{Condition-H}. For the term $H_{3}$, the H\"older inequality and Sobolev embedding imply that
	\begin{equation}\nonumber
	\begin{aligned}
	H_{3}
	\lesssim &
	\|\nabla Q\|_{L^{2}}\| Q\|_{H^1}\|\dot{Q}\|_{L^{2}}
	+\|\nabla Q\|^{3}_{L^{2}}\|\dot{Q}\|_{L^{2}}
	+ \| Q \|_{H^2} \| \dot{Q} \|_{H^2}  \| \nabla u \|_{L^{2}} \\
	\lesssim &
	(1+E^{\frac{1}{2}}(t))E^{\frac{1}{2}}(t)D^{\frac{1}{2}}(t)(\|\nabla Q\|_{L^2}+\|Q\|_{H^2})\,.
	\end{aligned}
	\end{equation}
	Summarizing all the previous estimates, we get
	\begin{equation}\label{L2 sums}
	\begin{aligned}
	& \tfrac{1}{2} \tfrac{d}{d t} \big( \| u \|^2_{L^2} + J\| \dot{Q} \|^2_{L^2} + L \| \nabla Q \|^2_{L^2} + a \| Q \|^2_{L^2} \big)  + \beta_{1} \| Q : A \|^{2}_{L^{2}} \\
	&+ \int_{\R^d} F ( \nabla u, \dot{Q} , [\Omega, Q] ) d x + ( \beta_{5} + \beta_{6} ) \langle A Q, A \rangle \\
	\lesssim &
	(1+E^{\frac{1}{2}}(t))E^{\frac{1}{2}}(t)D^{\frac{1}{2}}(t)(\|\nabla Q\|_{L^2}+\|Q\|_{H^2}) \,.
	\end{aligned}
	\end{equation}	
	
	\vspace*{4mm}
	
	{\em Step 2. Higher order derivative estimates.}
	
	\vspace*{4mm}
	For all multi-index $k \in \mathbb{N}^d$ with $1\leq|k|\leq s(s>\frac{d}{2}+1)$, from acting $\partial^{k}$
	on the first equation of \eqref{CIQS} and taking $L^{2}$-inner product with $ \partial^{k} u $, we deduce that
	\begin{equation}\label{priori u}
	\begin{aligned}
	&\tfrac{1}{2} \tfrac{d}{dt} \| \partial^{k} u \|^{2}_{L^2} + \langle  \partial^{k}( u \cdot \nabla u ) , \partial^{k} u \rangle + \tfrac{\beta_{4}}{2}  \|\nabla \partial^{k} u \|_{L^{2}}^{2}\\
	=&
	\langle \div \partial^{k} \Sigma_{1}, \partial^{k} u \rangle
	+ \langle \div \partial^{k} \Sigma_{2}, \partial^{k} u \rangle
	+ \langle \div \partial^{k} \Sigma_{3}, \partial^{k} u \rangle\,.
	\end{aligned}
	\end{equation}
	
	Then, applying multi-derivative operator $\partial^{k}$ on the third equation of \eqref{CIQS}, and taking $L^{2}$-inner product with $\partial^{k}\dot{Q}$, one easily yields that
	\begin{equation}\label{priori Q}
	\begin{aligned}
	&\tfrac{1}{2} \tfrac{d}{d t} \big( J \| \partial^{k} \dot{Q} \|^{2}_{L^2}
	+ L \| \nabla  \partial^{k} Q \|^{2}_{L^2}
	+ a \| \partial^{k} Q \|^{2}_{L^2} \big)
	+ \mu_{1}  \| \partial^{k} \dot{Q} \|^{2}_{L^{2}}\\
	=&
	-J \langle \partial^{k}( u \cdot \nabla  \dot{Q}) , \partial^{k} \dot{Q} \rangle
	+L \langle  \Delta \partial^{k} Q , \partial^{k} ( u \cdot \nabla Q) \rangle
	-a \langle \partial^{k} Q, \partial^{k} ( u \cdot \nabla  Q ) \rangle\\
	&
	+ b \langle \partial^{k}  (Q^{2}- \tfrac{1}{d} | Q |^{2}I_{d}) ,  \partial^{k} \dot{Q} \rangle
	-c \langle \partial^{k} (Q | Q |^{2} ) ,  \partial^{k} \dot{Q} \rangle
	+ \tfrac{\tilde{\mu}_{2}}{2} \langle \partial^{k} A ,
	\partial^{k} \dot{Q} \rangle\\
	&
	+\mu_{1} \langle \partial^{k} ([ \Omega , Q ] ), \partial^{k} \dot{Q} \rangle
	\,.
	\end{aligned}
	\end{equation}
	Therefore, for all $1\leq|k|\leq s$, it is easily derived from combining the above equalities \eqref{priori u} and \eqref{priori Q} that
	\begin{equation}\label{priori hk}
	\begin{aligned}
	& \tfrac{1}{2} \tfrac{d}{dt} \big(
	\| \partial^{k} u \|^{2}_{L^2} + J \| \partial^{k} \dot{Q} \|^{2}_{L^2}
	+L \| \nabla \partial^{k} Q \|^{2}_{L^2}
	+ a \| \partial^{k} Q \|^{2}_{L^2} \big) \\
	&
	+ \tfrac{\beta_{4}}{2}  \| \nabla \partial^{k} u \|^{2}_{L^{2}}
	+ \mu_{1}  \| \partial^{k} \dot{Q} \|^{2}_{L^{2}}\\
	=&
	\underbrace{ L \langle  \Delta \partial^{k} Q , \partial^{k}( u \cdot \nabla Q) \rangle
		+  \langle  \div \partial^{k} \Sigma_{1} , \partial^{k} u  \rangle   }_{I_1} \\
	&
	\underbrace{  - \langle  \partial^{k}( u \cdot \nabla u ) , \partial^{k} u \rangle
		-J \langle \partial^{k}( u \cdot \nabla  \dot{Q}) , \partial^{k} \dot{Q} \rangle
		-a \langle \partial^{k} Q, \partial^{k} ( u \cdot \nabla  Q ) \rangle}_{I_{2}} \\
	& \left.
	\begin{array}{l}
	+ \langle \div \partial^{k} \Sigma_{2} , \partial^{k} u \rangle + \tfrac{\mu_{2}}{2} \langle \div \partial ^ k \dot{Q} , \partial ^ k u \rangle -\tfrac{\mu_{2}}{2} \langle \div \partial ^ k ([\Omega , Q ]) , \partial ^ k u \rangle \\[4mm]
	\ \ \ \ \ \ \ \ \ \ \ \ \ \ \ \ \ \ \ \ \ \ \  -\mu_{1} \langle \div \partial ^ k ([Q ,[\Omega , Q ]]) , \partial ^ k u \rangle
	+ \tfrac{\tilde{\mu}_{2}}{2} \langle \partial ^ k A  , \partial ^ k \dot{Q} \rangle
	\end{array}
	\right\} {I_3}\\
	&
	\underbrace{+ \mu_{1} \langle \div \partial ^ k ([Q , \dot{Q}]), \partial ^ k u \rangle +\mu_{1} \langle \partial^{k} ([ \Omega , Q ] ), \partial^{k} \dot{Q} \rangle}_{I_4} \\
	&
	\underbrace{+ b \langle \partial^{k}  (Q^{2}- \tfrac{1}{d} | Q |^{2}I_{d}) ,  \partial^{k} \dot{Q} \rangle}_{I_5}
	\underbrace{-c \langle \partial^{k} (Q | Q |^{2} ) ,  \partial^{k} \dot{Q} \rangle }_{I_6}
	\,.
	\end{aligned}
	\end{equation}
	We now turn to deal with $I_{i}(1\leq i\leq 6)$ term by term. It is obvious that the terms in $I_{1}$
	have a cancellation relation, so we can get by
	employing the H\"older inequality and the Sobolev embedding theory that
	\begin{equation}\label{I 1}
	\begin{aligned}
	I_{1}=&
	-\langle \partial_{j} \partial^{k} Q_{\alpha \beta} , \partial_{j} u_{i}  \partial_{i} \partial^{k} Q_{\alpha \beta} \rangle
	-\sum_{\substack{m_{1}+m_{2}=k,\\1 \leq | m_{2} | \leq |k|-1}}
	\langle  \Delta \partial^{m_{1}} Q_{\alpha \beta}   \partial_{i} \partial^{m_{2}} Q_{\alpha \beta}, \partial^{k} u_{i} \rangle\\
	&
	-\sum_{\substack{m_{1} + m_{2}= k,\\ 1 \leq | m_{2} | \leq |k|-1}}
	\langle \partial^{m_{1}} u_{i} \partial_{i} \partial_{j} \partial^{m_{2}} Q_{\alpha \beta}
	+  \partial_{j} \partial^{m_{1}} u_{i}  \partial_{i} \partial^{m_{2}} Q_{\alpha \beta},  \partial_{j} \partial^{k} Q_{\alpha \beta} \rangle\\
	&
	-\langle \Delta Q_{\alpha \beta}   \partial_{i} \partial^{k} Q_{\alpha \beta}, \partial^{k} u_{i} \rangle\\
	\lesssim&
	\| \nabla u \|_{L^{\infty}} \| \nabla \partial^{k} Q \|^{2}_{L^{2}}
	+\sum_{\substack{m_{1} + m_{2} = k, \\ 1 \leq | m_{2} | \leq |k|-1}}
	\| \Delta \partial^{m_{1}} Q \|_{L^{2}} \| \nabla \partial^{m_{2}} Q \|_{L^{4}} \| \partial^{k} u \|_{L^{4}}\\
	&
	+\sum_{\substack{m_{1} + m_{2}=k,\\ 1 \leq | m_{2} | \leq |k|-1}}
	(\| \partial^{m_{1}} u \|_{L^{\infty}} \| \Delta \partial^{m_{2}} Q \|_{L^{2}}
	+\|\nabla \partial^{m_{1}} u \|_{L^{4}}
	\| \nabla \partial^{m_{2}} Q \|_{L^{4}}  )\| \nabla \partial^{k} Q \|_{L^{2}}\\
	&
	+\| \Delta Q \|_{L^{4}} \| \partial^{k} u \|_{L^{4}} \| \nabla \partial^{k} Q \|_{L^{2}}
	\lesssim
	\| \nabla Q \|^{2}_{H^{s}} \| \nabla u\|_{H^{s}} \lesssim E^{\frac{1}{2}}(t)D^{\frac{1}{2}}(t)\|\nabla Q\|_{H^s} \,.
	\end{aligned}
	\end{equation}
	As to $I_{2}$, from using the H\"older inequality, the Sobolev embedding theory and the fact $\div u =0$ to deduce that
	\begin{align}
		\no I_{2}=&
		-\sum_{\substack{m_{1}+m_{2}=k,\\1 \leq | m_{1} |}}  \langle  \partial^{m_{1}} u \cdot \nabla \partial^{m_{2}} u , \partial^{k} u \rangle
		-J \sum_{\substack{m_{1}+m_{2}=k,\\1 \leq | m_{1} |}} \langle \partial^{m_{1}} u \cdot \nabla \partial^{m_{2}}\dot{Q} , \partial^{k} \dot{Q} \rangle
	\end{align}
	\begin{align}\label{I 2}
		\no &
		-a \sum_{\substack{m_{1}+m_{2}=k,\\1 \leq | m_{1} |}}  \langle \partial^{k} Q, \partial^{m_{1}} u \cdot \nabla \partial^{m_{2}}  Q  \rangle \\
		\no \lesssim&
		\sum_{\substack{m_{1}+m_{2}=k,\\1 \leq | m_{1} |}}  \|  \partial^{m_{1}} u \|_{L^4} \| \nabla \partial^{m_{2}} u \|_{L^4} \| \partial^{k} u \|_{L^2}
		+ \| \nabla u \|_{L^\infty} \| \partial^k \dot{Q} \|^2_{L^2}\\
		\no &
		+ \sum_{\substack{m_{1}+m_{2}=k,\\2 \leq | m_{1} |}}  \| \partial^{m_{1}} u\|_{L^4} \| \nabla \partial^{m_{2}} \dot{Q}\|_{L^4} \|\partial^{k} \dot{Q} \|_{L^2}
		+ \| \nabla u \|_{L^\infty} \| \partial^k Q \|^2_{L^2}\\
		\no &
		+ \sum_{\substack{m_{1}+m_{2}=k,\\2 \leq | m_{1} |}}  \| \partial^{m_{1}} u\|_{L^4} \| \nabla \partial^{m_{2}} Q\|_{L^4} \|\partial^{k} Q \|_{L^2} \\
		\no \lesssim&
		\| \nabla u \|_{H^s} \| u \|^2_{\dot{H}^s} + \| \nabla u  \|_{H^s} (\| \dot{Q} \|^2_{H^s} + \| Q \|^2_{H^s})\\
		\lesssim&
		E^{\frac{1}{2}}(t)D^{\frac{1}{2}}(t)(\|u\|_{\dot{H}^s}+\|Q\|_{H^s}+\|\dot{Q}\|_{H^s})\,.
	\end{align}
	
	We next estimate $I_{3}$.
	We observe that there are some dissipations in $I_3$ by employing the similar arguments as the derivations of $ L^{2} $-estimate. We can divide $I_{3}$ into two parts $ I^{e}_{3} $ and $ I^{m}_{3} $:
	\begin{equation}\nonumber
	\begin{aligned}
	I_{3}=I^{e}_{3}+I^{m}_{3} \,,
	\end{aligned}
	\end{equation}
	where
	\begin{equation}\nonumber
	\begin{aligned}
	I^{e}_{3}=&
	-\beta_{1} \langle Q \tr(Q \partial^{k} A) ,  \nabla \partial^{k} u \rangle
	-\beta_{5} \langle \partial^{k} A Q ,  \nabla \partial^{k} u \rangle
	-\beta_{6} \langle Q \partial^{k} A ,  \nabla \partial^{k} u \rangle\\
	&
	+\tfrac{\mu_{2}}{2} \langle [\partial^{k} \Omega , Q ],  \nabla \partial^{k} u \rangle
	-\tfrac{\mu_{2}}{2} \langle \partial^{k} \dot{Q},  \nabla \partial^{k} u \rangle
	+ \tfrac{\tilde{\mu}_{2}}{2} \langle \partial^{k} A  , \partial^{k} \dot{Q} \rangle\\
	&
	+\mu_{1} \langle [Q,[\partial^{k} \Omega, Q ] ],  \nabla \partial^{k} u \rangle \,,
	\end{aligned}
	\end{equation}
	and
	\begin{align}\nonumber
		I^{m}_{3} = & \underbrace{ - \beta_{1} \sum_{ \substack{ m_{1} + m_{2} + m_{3} = k , \\ 0 \leq | m_{3} | \leq |k|-1  } } \langle \partial^{m_{1} } Q \tr ( \partial^{m_{2}} Q \partial^{m_{3}} A ) , \nabla \partial^{k} u \rangle }_{I_{31}^m}  \\
		\no & \underbrace{ - \beta_{5} \sum_{ \substack{ m_{1} + m_{2} = k , \\ 0 \leq | m_{1} | \leq |k|-1 } } \langle \partial^{ m_{1} } A \partial^{m_{2}} Q ,  \nabla \partial^{k} u \rangle }_{I_{32}^m} \ \underbrace{ -\beta_{6} \sum_{ \substack{ m_{1} + m_{2} = k , \\ 0 \leq | m_{2} | \leq |k|-1 } } \langle \partial^{m_{1}} Q \partial^{m_{2}} A, \nabla \partial^{k} u \rangle }_{I_{33}^m} \\
		\no & + \underbrace{ \tfrac{\mu_{2}}{2} \sum_{ \substack{ m_{1} + m_{2} = k, \\ 0 \leq | m_{1} | \leq |k|-1}} \langle [ \partial^{m_{1}} \Omega , \partial^{m_{2}} Q ], \nabla \partial^{k} u \rangle }_{I_{34}^m} \\
		\no & + \underbrace{ \mu_{1} \sum_{ \substack{ m_{1} + m_{2} + m_{3} = k , \\ 0 \leq | m_{2} | \leq  |k|-1}} \langle [ \partial^{m_{1}} Q , [ \partial^{m_{2}} \Omega , \partial^{m_{3}} Q ] ], \nabla \partial^{k} u \rangle }_{I_{35}^m} \,.
	\end{align}
	
	From the similar derivations of the $ L^{2} $-estimate, one easily deduces that
	\begin{equation}\no
	\begin{aligned}
	I^{e}_{3} - \mu_{1} & \| \partial^k \dot{Q} \|^2_{L^{2}} -\tfrac{\beta_{4}}{2} \| \nabla \partial^{k} u \|^2_{L^2}  \\
	= & - \beta_{1} \|  Q : \partial^{k} A  \|^{2}_{L^{2}} - \int_{\R^d} F ( \nabla \partial^k u, \partial^k \dot{Q} , [ \partial^k \Omega, Q ] ) - ( \beta_{5} + \beta_{6}) \langle \partial ^ k A Q , \partial ^ k A  \rangle \,,
	\end{aligned}
	\end{equation}
	where $F (\cdot, \cdot , \cdot) : \R^{d \times d} \times \R^{d \times d} \times \R^{d \times d} \rightarrow \R$ is given in \eqref{Condition-H}.
	
	Now we will estimate the $I^{m}_{3}$ one by one, which should be controlled by the free energy or/and dissipation energy.
	Thanks to the H\"older inequality and the Sobolev embedding theory, one has
	\begin{equation}\nonumber
	\begin{aligned}
	I^{m}_{31}
	\lesssim&
	\| \partial^{k} Q \|_{L^{6}} \| Q \|_{L^{6}} \| \nabla u \|_{L^{6}} \| \nabla \partial^{k}u \|_{L^{2}}
	\\
	&+ \sum_{\substack{ m_{1} + m_{2} + m_{3} = k,\\ 1 \leq  | m_{3} | \leq |k|-1}} \|\partial^{m_{1}} Q \|_{L^{\infty }} \| \partial^{m_{2}} Q \|_{L^{\infty }}\| \nabla \partial^{m_{3}} u\|_{L^{2}} \| \nabla \partial^{k}u \|_{L^{2}}\\
	\lesssim&
	(\|  Q \|_{H^{s}} + \| \nabla Q \|_{H^{s}} )\| \nabla Q \|_{H^{s}} \| u \|_{\dot{H}^{s}}\|\nabla u\|_{H^{s}} \,,
	\end{aligned}
	\end{equation}
	and
	\begin{equation}\nonumber
	\begin{aligned}
	I^{m}_{32}
	\lesssim&
	\big( \| \nabla u \|_{L^{4}} \| \partial^{k} Q \|_{L^{4}}
	+ \sum_{\substack{ m_{1} + m_{2} = k, \\ 1 \leq | m_{1} | \leq |k|-1 }} \| \nabla \partial^{m_{1}} u\|_{L^{2}} \|\partial^{m_{2}} Q \|_{L^{\infty}} \big) \| \nabla \partial^{k} u \|_{L^{2}}\\
	\lesssim &
	\| \nabla Q \|_{H^{s}} \|  u \|_{\dot{H}^{s}}  \| \nabla u \|_{H^{s}} \,,
	\end{aligned}
	\end{equation}
	and
	\begin{equation}\nonumber
	\begin{aligned}
	I^{m}_{33}
	\lesssim&
	\big( \| \nabla u \|_{L^{4}} \| \partial^{k} Q \|_{L^{4}}
	+ \sum_{\substack{ m_{1} + m_{2} = k, \\ 1 \leq | m_{1} | \leq |k|-1}} \| \nabla \partial^{m_{1}} u\|_{L^{2}} \|\partial^{m_{2}} Q \|_{L^{\infty}}
	\big) \| \nabla \partial^{k} u \|_{L^{2}}\\
	\lesssim &
	\| \nabla Q \|_{H^{s}} \|  u \|_{\dot{H}^{s}}  \| \nabla u \|_{H^{s}} \,,
	\end{aligned}
	\end{equation}
	and
	\begin{equation}\nonumber
	\begin{aligned}
	I^{m}_{34}
	\lesssim&
	\big( \| \nabla u \|_{L^{4}} \| \partial^{k} Q \|_{L^{4}}
	+ \sum_{\substack{ m_{1} + m_{2} = k, \\ 1 \leq | m_{1} | \leq |k|-1}} \| \nabla \partial^{m_{1}} u\|_{L^{2}} \|\partial^{m_{2}} Q \|_{L^{\infty}} \big) \| \nabla \partial^{k} u \|_{L^{2}}\\
	&\\
	\lesssim &
	\| \nabla Q \|_{H^{s}} \|  u \|_{\dot{H}^{s}}  \| \nabla u \|_{H^{s}} \,,
	\end{aligned}
	\end{equation}
	and
	\begin{equation}\nonumber
	\begin{aligned}
	I^{m}_{35}
	\lesssim&
	\| \partial^{k} Q \|_{L^{6}} \| Q \|_{L^{6}} \| \nabla u \|_{L^{6}}  \| \nabla \partial^{k}u \|_{L^{2}}
	\\
	&+ \sum_{\substack{ m_{1} + m_{2} + m_{3} = k,\\ 1 \leq  | m_{3} | \leq |k|-1}} \|\partial^{m_{1}} Q \|_{L^{\infty }} \| \partial^{m_{2}} Q \|_{L^{\infty }}\| \nabla \partial^{m_{3}} u\|_{L^{2}}  \| \nabla \partial^{k}u \|_{L^{2}}\\
	\lesssim&
	(\|  Q \|_{H^{s}} + \| \nabla Q \|_{H^{s}} )\| \nabla Q \|_{H^{s}} \| u \|_{\dot{H}^{s}}\|\nabla u\|_{H^{s}} \,.
	\end{aligned}
	\end{equation}
	As a result, we have the following estimate, there is a constant $C>0$ such that
	\begin{equation}\label{I 3}
	\begin{aligned}
	I_{3} \leq & \mu_{1} \| \partial^k \dot{Q} \|^2_{L^{2}}
	- \beta_{1} \|  Q : \partial^{k} A  \|^{2}_{L^{2}} - \int_{\R^d} F ( \nabla \partial^k u, \partial^k \dot{Q} , [ \partial^k \Omega, Q ] ) \\
	& - ( \beta_{5} + \beta_{6}) \langle \partial ^ k A Q , \partial ^ k A  \rangle  + C I^{\prime}
	\,.
	\end{aligned}
	\end{equation}
	where
	\begin{equation}\nonumber
	\begin{aligned}
	I^{\prime} = (1+E^{\frac{1}{2}}(t))E^{\frac{1}{2}}(t)D^{\frac{1}{2}}(t) (\| u \|_{\dot{H}^{s}}  + \| Q \|_{H^{s}} +\|\nabla Q\|_{H^{s}})
	\,.
	\end{aligned}
	\end{equation}
	We next deal with $I_{4}$, it is easy to derive from Lemma \ref{Lemma3} and the Sobolev embedding theory that
	\begin{equation}\label{I 4}
	\begin{aligned}
	I_{4}
	=& -\mu_{1} \langle  \partial ^ k ([Q , \dot{Q}]), \nabla \partial ^ k u \rangle +\mu_{1} \langle \partial^{k} ([ \Omega , Q ] ), \partial^{k} \dot{Q} \rangle \\
	\lesssim&
	\| \partial^k ([Q , \dot{Q}]) \|_{L^2} \| \nabla \partial ^ k u  \|_{L^2} +  \| \partial^k  ([ \Omega , Q ] ) \|_{L^2} \|  \partial^{k} \dot{Q} \|_{L^2} \\
	\lesssim &
	\| \nabla u \|_{H^s} \|Q \|_{\dot{H}^s} \| \dot{Q} \|_{\dot{H}^s} \lesssim E^{\frac{1}{2}}(t)D^{\frac{1}{2}}(t)\|Q \|_{\dot{H}^s}\,.
	\end{aligned}
	\end{equation}
	As for the estimate of $I_5$, we only need to estimate the term
	\begin{equation}\no
	\begin{aligned}
	\langle \partial^{k} Q^{2},  \partial^{k} \dot{Q} \rangle \lesssim \sum_{ \substack{ m_{1} + m_{2} = k } }\| \partial^{m_{1}} Q \|_{L^4} \| \partial^{m_{2}} Q \|_{L^4} \| \partial^k \dot{Q} \|_{L^2} \lesssim \| \nabla Q \|^2_{H^s} \| \dot{Q} \|_{H^s}
	\,,
	\end{aligned}
	\end{equation}
	Then we have
	\begin{equation}\label{I 5}
	\begin{aligned}
	I_{5} \lesssim
	\| \nabla Q \|^2_{H^s} \| \dot{Q} \|_{H^s} \lesssim E^{\frac{1}{2}}(t)D^{\frac{1}{2}}(t)\|\nabla Q \|_{H^s}\,.
	\end{aligned}
	\end{equation}
	For the term $I_6$, we take advantage of the H\"older inequality and the Sobolev embedding inequality to get that
	\begin{equation}\label{I 6}
	\begin{aligned}
	I_6
	\lesssim & \sum_{ \substack{ m_{1} + m_{2} + m_{3} = k}}
	\| \partial^{m_{1}} Q \|_{L^6}  \| \partial^{m_{2}} Q \|_{L^6}  \| \partial^{m_{3}} Q \|_{L^6} \|  \partial^{k} \dot{Q} \|_{L^2} \\
	\lesssim &
	\|\nabla Q\|^3_{H^s} \|\dot{Q}\|_{H^s} \lesssim  E(t)D^{\frac{1}{2}}(t)\|\nabla Q\|_{H^s}\,.
	\end{aligned}
	\end{equation}
	From collecting the relations  \eqref{priori hk}, \eqref{I 1}, \eqref{I 2}, \eqref{I 3}, \eqref{I 4}, \eqref{I 5} and \eqref{I 6}, summing up for all $ 1 \leq | k | \leq  s $ and combining the $ L^{2} $-estimate \eqref{L2 sums}, we deduce that
	\begin{align}\label{priori 1}
		\no & \tfrac{1}{2} \tfrac{d}{dt} (  \| u \|^{2}_{H^{s}} + J\| \dot{Q} \|^{2}_{H^{s}} + L \| \nabla Q \|^{2}_{H^{s}} + a \| Q \|^{2}_{H^{s}}) + \beta_{1} \sum^{s}_{| k |=0} \|  Q : \partial^{k} A \|^{2}_{L^{2}}\\
		\no & + \sum_{|k|=0}^s \int_{\R^d} F ( \nabla \partial^k u, \partial^k \dot{Q} , [\partial^k \Omega, Q] ) d x  + ( \beta_{5} + \beta_{6} ) \sum^{s}_{| k |=0} \langle \partial ^ k A Q , \partial ^ k A  \rangle \\
		\lesssim &
		(1+E^{\frac{1}{2}}(t)) E^{\frac{1}{2}}(t)D^{\frac{1}{2}}(t) (\|u \|_{\dot{H}^s}+\|\dot{Q}\|_{H^{s}} + \|\nabla Q\|_{H^s} + \| Q \|_{H^s})\,,
	\end{align}
	
	From the definition $D (t)$ \eqref{Local-Dissipation} and $\mathcal{A}(t)$ \eqref{A}, the inequality \eqref{priori 1} reduces to
	\begin{equation}\label{Sum-1}
	\begin{aligned}
	& \tfrac{1}{2} \tfrac{d}{dt} E(t) + \beta_{1} \sum^{s}_{| k |=0} \|  Q : \partial^{k} A \|^{2}_{L^{2}} + \sum_{|k|=0}^s \int_{\R^d} F ( \nabla \partial^k u, \partial^k \dot{Q} , [\partial^k \Omega, Q] ) d x \\
	& + ( \beta_{5} + \beta_{6} ) \sum^{s}_{| k |=0} \langle \partial ^ k A Q , \partial ^ k A  \rangle
	\lesssim   (1+E^{\frac{1}{2}}(t)) E^{\frac{1}{2}}(t)D^{\frac{1}{2}}(t)\mathcal{A}^{\frac{1}{2}}(t)  \,.
	\end{aligned}
	\end{equation}
	
	If the Condition (H) in \eqref{Condition-H} holds, namely, there are two generic constants $\delta_0, \delta_1 \in (0,1]$ such that
	\begin{equation*}
		\begin{aligned}
			\sum_{|k|=0}^s \int_{\R^d} F ( \nabla \partial^k u, \partial^k \dot{Q} , [\partial^k \Omega, Q] ) d x \geq \delta_0 \tfrac{1}{2} \beta_4 \| \nabla u \|^2_{H^s} + \delta_1 \mu_1 \| \dot{Q} \|^2_{H^s} \,,
		\end{aligned}
	\end{equation*}
	we know that there is a constant $c_0^\star > 0$ such that
	\begin{equation}\label{F-1}
	\begin{aligned}
	\beta_{1} \sum^{s}_{| k |=0} \|  Q : \partial^{k} A \|^{2}_{L^{2}} + \sum_{|k|=0}^s \int_{\R^d} F ( \nabla \partial^k u, & \partial^k \dot{Q} , [\partial^k \Omega, Q] ) d x \geq c_0^\star D (t) \,.
	\end{aligned}
	\end{equation}
	
	If the Condition (H) does NOT hold, there are two cross terms
	$$- \mu_2 \sum_{|k| = 0}^s \langle \nabla \partial^k u , [\partial^k \Omega , Q] \rangle - \tfrac{1}{2} (\tilde{\mu}_2 - \mu_2) \sum_{|k| = 0}^s \langle \nabla \partial^k u , \partial^k \dot{Q} \rangle $$
	cannot be absorbed by the positive terms (with coefficients $\tfrac{1}{2} \beta_4$ and $\mu_1$ in $F (\nabla \partial^k u, \partial^k \dot{Q} , [\partial^k \Omega, Q])$). However, they can be bounded by
	\begin{equation}\label{F-2}
	\begin{aligned}
	\mu_2 \sum_{|k| = 0}^s \langle \nabla \partial^k u , [\partial^k \Omega , Q] \rangle + \tfrac{1}{2} (\tilde{\mu}_2 - \mu_2) \sum_{|k| = 0}^s \langle \nabla \partial^k u , \partial^k \dot{Q} \rangle \\
	\lesssim \| Q \|_{H^s} \| \nabla u \|^2_{H^s} + |\tilde{\mu}_2 - \mu_2| \| \dot{Q} \|_{H^s} \| \nabla u \|_{H^s} \\
	\lesssim E^\frac{1}{2} (t) D (t) + \eps \| \nabla u \|^2_{H^s} + |\tilde{\mu}_2 - \mu_2|^2 E (t) \,,
	\end{aligned}
	\end{equation}
	where $\eps > 0$ is small to be determined and the first inequality is derived from the Sobolev embedding theory. Moreover, there is a constant $c_1^\star > 0$ such that
	\begin{equation}\label{F-3}
	\begin{aligned}
	\sum_{|k|=0}^s \int_{\R^d} (\tfrac{\beta_{4}}{2}|\nabla\partial^k u|^2+\mu_{1}|\partial^k \dot{Q}|^2+|[\partial^k \Omega, Q]|^2) d x + \mu_2 \sum_{|k| = 0}^s \langle \nabla \partial^k u , [\partial^k \Omega , Q] \rangle\\
	+ \tfrac{1}{2} (\tilde{\mu}_2 - \mu_2) \sum_{|k| = 0}^s \langle \nabla \partial^k u , \partial^k \dot{Q} \rangle + \beta_{1} \sum^{s}_{| k |=0} \|  Q : \partial^{k} A \|^{2}_{L^{2}}  \\
	\geq c_1^\star D (t) - C |\tilde{\mu}_2 - \mu_2|^2 E (t)  \,.
	\end{aligned}
	\end{equation}
	Here we take $ 0 < \eps < \tfrac{1}{4} \beta_4 $ in \eqref{F-2}. Furthermore, one has
	\begin{equation}\label{F-4}
	\begin{aligned}
	- ( \beta_{5} + \beta_{6} ) \sum^{s}_{| k |=0} \langle \partial ^ k A Q , \partial ^ k A  \rangle \lesssim &|\beta_5 + \beta_6| \| Q \|_{H^s} \| \nabla u \|^2_{H^s} \\
	\lesssim &|\beta_5 + \beta_6| E^\frac{1}{2} (t) D (t) \,.
	\end{aligned}
	\end{equation}
	We therefore deduce from the inequalities \eqref{Sum-1}, \eqref{F-1}, \eqref{F-2}, \eqref{F-3} and \eqref{F-4} that
	\begin{equation}\label{H-L}
	\begin{aligned}
	\tfrac{1}{2} \tfrac{d}{d t} E (t) + c_0 D (t) \lesssim & ( \delta_H + |\beta_5 + \beta_6| ) E^\frac{1}{2} (t) D (t)     \\
	& + (1+E^{\frac{1}{2}}(t)) E^{\frac{1}{2}}(t)D^{\frac{1}{2}}(t)\mathcal{A}^{\frac{1}{2}}(t) + \delta_H |\tilde{\mu}_2 - \mu_2|^2 E (t) \,,
	\end{aligned}
	\end{equation}
	where $c_0 = \min\{ c_0^\star, c_1^\star \} > 0$, and
	\begin{equation*}
		\delta_H =
		\left\{
		\begin{array}{l}
			0 \,, \ \textrm{ if the Condition (H) holds} \,, \\
			1 \,, \ \textrm{ if the Condition (H) does not hold} \,.
		\end{array}
		\right.
	\end{equation*}
	Consequently, the proof of Lemma \ref{priori} is completed.
\end{proof}

\section{ Local well-posedness: proofs of Theorem \ref{Main-Thm-1} and Theorem \ref{Main-Thm-1-prime}}\label{Sec: Local}

In this section, we prove the local results in Theorem \ref{Main-Thm-1} and Theorem \ref{Main-Thm-1-prime} by employing the mollifier method.
\begin{proof}[Proof of Theorem \ref{Main-Thm-1} and Theorem \ref{Main-Thm-1-prime}]
	We first define the mollifying operator
	\begin{equation}
	\begin{aligned}
	\J_{\epsilon} f: = {\mathcal{F}}^{-1} ({\mathbf{1}}_{\mid \xi \mid  \leq  \frac{1}{\epsilon}} {\mathcal{F}} (f)),
	\end{aligned}
	\end{equation}
	where the symbol ${\mathcal{F}}$ is the Fourier transform operator and
	${\mathcal{F}}^{-1}$ is its inverse transform.
	It is easy to verify that the mollifier operator has the
	property $ { \J }^{2}_{\epsilon} = { \J }_{\epsilon}$.
	Then the approximate system is constructed as follows:
	\begin{equation}\label{App-Eq}
	\begin{cases}
	\partial_{t} u^{\epsilon}
	+ {\mathcal{P}} {\J}_{\epsilon} ({\J}_{\epsilon}
	u^{\epsilon} \cdot {\J}_{\epsilon} \nabla u^{\epsilon})
	- \frac{1}{2} \beta_{4} \Delta {\J}_{\epsilon}  u^{\epsilon}
	= {\mathcal{P}}\div( \Sigma_{1}^{\epsilon} + \Sigma_{2}^{\epsilon} +\Sigma_{3}^{\epsilon}) \\
	\ \ \ \ \ \ \ \ \ \ \ \ \ \ \ \ \ \ \ \ \ \ \ \ \ \ \ \ \div u^{\epsilon}=0,\\
	J {\J}_{\epsilon}\ddot{Q}^\epsilon + \mu_{1} {\J}_{\epsilon}\dot{Q}^{\epsilon}
	=
	L \Delta {\J}_{\epsilon}  Q^{\epsilon}
	-a Q^{\epsilon}
	+b {\J}_{\epsilon} ({\J}_{\epsilon} Q^{\epsilon} {\J }_{\epsilon} Q^{\epsilon})
	-b {\J }_{\epsilon} \tr( {\J }_{\epsilon} Q^{\epsilon} {\J }_{\epsilon} Q^{\epsilon}) \frac{I_{d}}{d} \\
	\ \ \ \ \ \ \ \ \ \ \ \ \ \ \ \ -c {\J}_{\epsilon} \big( {\J}_{\epsilon} Q^{\epsilon}
	\tr({\J}_{\epsilon} Q^{\epsilon}
	{\J}_{\epsilon} Q^{\epsilon}) \big)
	+\frac{ \tilde{\mu}_{2} }{2}  {\J}_{\epsilon} A^{\epsilon}
	+\mu_{1} {\J}_{\epsilon} [ {\J}_{\epsilon} \Omega^{\epsilon} ,
	{\J}_{\epsilon} Q^{\epsilon} ] \,.
	\end{cases}
	\end{equation}
	where
	\begin{equation}\nonumber
	\begin{aligned}
	\Sigma_{1}^{\epsilon}   & : = - L
	{\J}_{\epsilon} ( \nabla {\J}_{\epsilon}
	Q^{\epsilon} \odot \nabla {\J}_{\epsilon}  Q^{\epsilon}) \,, \\
	\Sigma_{2}^{\epsilon} & : =
	\beta_{1} {\J}_{\epsilon} \{{\J}_{\epsilon} Q^{\epsilon}
	\tr({\J}_{\epsilon} Q^{\epsilon} {\J}_{\epsilon} A^{\epsilon}) \}
	+\beta_{5} {\J}_{\epsilon} ( {\J}_{\epsilon} A^{\epsilon}
	{\J}_{\epsilon} Q^{\epsilon} )
	+\beta_{6} {\J}_{\epsilon} ({\J}_{\epsilon} Q^{\epsilon}
	{\J}_{\epsilon} A^{\epsilon}) \,,  \\
	\Sigma_{3}^{\epsilon} & : =
	\tfrac{ \mu_{2} }{2}  ({\J}_{\epsilon} \dot{Q}^{\epsilon}
	-{\J}_{\epsilon} [{\J}_{\epsilon} \Omega^{\epsilon} ,
	{\J}_{\epsilon} Q^{\epsilon} ] )
	+\mu_{1} {\J}_{\epsilon} [ {\J}_{\epsilon}Q^{\epsilon} ,
	({\J}_{\epsilon} \dot{Q}^{\epsilon}
	-[{\J}_{\epsilon} \Omega^{\epsilon} ,
	{\J}_{\epsilon} Q^{\epsilon}] ) ]  \,,
	\end{aligned}
	\end{equation}
	and
	\begin{equation}
	\begin{aligned}
	\Omega^{\epsilon}:&=\tfrac{1}{2} (\nabla u^{\epsilon} - \nabla^{\top} u^{\epsilon}),\\
	A^{\epsilon}:&=\tfrac{1}{2} (\nabla u^{\epsilon} + \nabla^{\top} u^{\epsilon}),\\
	\dot{Q}^{\epsilon}:&= \partial_{t}Q^{\epsilon} + {\mathcal{J}}_{\epsilon}
	({\mathcal{J}}_{\epsilon}u^{\epsilon} \cdot \nabla  {\mathcal{J}}_{\epsilon}Q^{\epsilon}),
	\end{aligned}
	\end{equation}
	and ${\mathcal{P}}$ denotes the Leray projector onto divergence-free vector fields. Moreover, the initial data of the approximate system \eqref{App-Eq} is imposed on
	\begin{equation}\label{IC-IAS}
	\begin{aligned}
	(  u^{\epsilon} , Q^{\epsilon} , \dot{Q}^{\epsilon} ) (0, x)  = ( {\mathcal{J}}_{\epsilon}u^{in} , {\mathcal{J}}_{\epsilon}Q^{in} , {\mathcal{J}}_{\epsilon}\tilde{Q}^{in} ) (x) \in  \R^d \times S^{(d)}_{0} \times S^{(d)}_{0} \,.
	\end{aligned}
	\end{equation}
	
	By ODE theory, we know that there is a maximal $T_{\epsilon}>0$ such that the approximate system
	\eqref{App-Eq} has a unique solution
	$u^{\epsilon}\in C([0,T_{\epsilon});H^{s}(\mathbb{R}^d))$ and
	$Q^{\epsilon}\in C([0,T_{\epsilon});H^{s+1}(\mathbb{R}^d))$.
	Notice  that the
	fact $ {\mathcal{J}}^{2}_{\epsilon} = {\mathcal{J}}_{\epsilon} $,
	we know $({\mathcal{J}}_{\epsilon}u^{\epsilon} , {\mathcal{J}}_{\epsilon} Q^{\epsilon})$
	is also a solution to the approximate system \eqref{App-Eq}.
	Then by the uniqueness of the solution
	we know that $( {\mathcal{J}}_{\epsilon} u^{\epsilon} , {\mathcal{J}}_{\epsilon} Q^{\epsilon})
	= (u^{\epsilon} , Q^{\epsilon})$.
	Therefore, the solution $(u^{\epsilon},Q^{\epsilon})$ also solves the following system
	\begin{equation}\label{App-Eq1}
	\begin{cases}
	\partial_{t} u^{\epsilon}
	+{\mathcal{P}} {\mathcal{J}}_{\epsilon} (u^{\epsilon} \cdot \nabla u^{\epsilon})
	-\frac{1}{2} \beta_{4} \Delta u^{\epsilon}
	=
	-L{\mathcal{P}}\div \Big( {\mathcal{J}}_{\epsilon} ( \nabla Q^{\epsilon} \odot \nabla Q^{\epsilon} ) \Big) \\
	\quad\quad\quad+ {\mathcal{P}}\div\Big( \beta_{1} {\mathcal{J}}_{\epsilon} \{ Q^{\epsilon} \tr (Q^{\epsilon}A^{\epsilon})\}
	+\beta_{5} {\mathcal{J}}_{\epsilon}( A^{\epsilon} Q^{\epsilon} )
	+\beta_{6} {\mathcal{J}}_{\epsilon}( Q^{\epsilon} A^{\epsilon}) \Big)\\
	\quad\quad\quad
	+{\mathcal{P}}\div\Big( \frac{ \mu_{2} }{2}  (\dot{Q}^{\epsilon} -{\mathcal{J}}_{\epsilon} [\Omega^{\epsilon} , Q^{\epsilon}] )
	+\mu_{1} {\mathcal{J}}_{\epsilon} [ Q^{\epsilon} , (\dot{Q}^{\epsilon} - [\Omega^{\epsilon} , Q^{\epsilon} ] )]\Big),\\
	\div u^{\epsilon} = 0,\\
	J \ddot{Q}^\epsilon+\mu_{1} \dot{Q}^{\epsilon} = L \Delta  Q^{\epsilon}
	-a Q^{\epsilon}
	+ b {\mathcal{J}}_{\epsilon} ( Q^{\epsilon} Q^{\epsilon} )
	-b \tr \Big( {\mathcal{J}}_{\epsilon} (Q^{\epsilon} Q^{\epsilon}) \Big) \frac{I_{d}}{d}\\
	\quad\quad\quad
	-c {\mathcal{J}}_{\epsilon} \Big( Q^{\epsilon} \tr( Q^{\epsilon} Q^{\epsilon} ) \Big)
	+\frac{ \tilde{\mu}_{2} }{2}  A^{\epsilon}
	+\mu_{1} {\mathcal{J}}_{\epsilon} [\Omega^{\epsilon} , Q^{\epsilon} ] \,.
	\end{cases}
	\end{equation}
	
	In the arguments proving the convergence ($\epsilon \rightarrow \infty$) of the approximate solutions \eqref{App-Eq1}-\eqref{IC-IAS}, it is essential to obtain uniform (in $\epsilon > 0$) energy estimates of \eqref{App-Eq1}-\eqref{IC-IAS}, whose derivations are the same as the derivations of the a priori estimates for the incompressible inertial Qian-Sheng model \eqref{CIQS} with initial data \eqref{initial date}. Another important thing is to prove existence of the uniform (in $\epsilon \geq 0$) lower bound of the lifespans $T_\epsilon > 0$ of the approximate system \eqref{App-Eq1}-\eqref{IC-IAS}. This is a standard process, which can be referred to \cite{DeAnna-Zarnescu-JDE-2018,Ma-2020DCDS} for instance. For simplicity, we will only establish the a priori estimate of \eqref{CIQS}-\eqref{initial date} in Section \ref{Sec: Apriori} and apply the continuity arguments.
	
	Based on Lemma \ref{priori}, we deduce from the Young's inequality and noticed that fact $\mathcal{A}(t)\lesssim \|u\|^2_{H^s} +\|\dot{Q}\|^2_{H^s}+ \| \nabla Q  \|^2_{H^s} + \|Q \|^2_{H^ s} \lesssim E(t)$ that
	\begin{equation}\label{Ener-Simple}
	\begin{aligned}
	\tfrac{d}{d t} E (t) + c_0 D (t)\lesssim &( \delta_H + |\beta_5 + \beta_6| ) E^\frac{1}{2} (t) D (t)\\
	&+ \delta_H |\tilde{\mu}_2 - \mu_2|^2 E (t)+ \big( 1 + E^{2} (t) \big) E (t) \,.
	\end{aligned}
	\end{equation}
	If $\beta_5 + \beta_6 = 0$ and the Condition (H) in \eqref{Condition-H} holds, i.e., $\delta_H = 0$, then the above inequality \eqref{Ener-Simple} tells us that
	\begin{equation}\label{Ener-Simple-1}
	\begin{aligned}
	\tfrac{d}{d t} E (t) + c_0 D (t) \lesssim  \big( 1 + E^{2} (t) \big) E (t) \,.
	\end{aligned}
	\end{equation}
	Let
	\begin{equation}\label{E(t)-bb}
	\begin{aligned}
	\mathbb{E} (t) : = E (t) + \int_0^t D (\tau) d \tau \,.
	\end{aligned}
	\end{equation}
	Since our goal is to construct the local-in-time solution, the assumption $t \leq 1$ makes sense. Thus, from \eqref{Ener-Simple-1}, we have
	\begin{equation}\label{Ener-Simple-2}
	\begin{aligned}
	\tfrac{d}{d t} \mathbb{E} (t) \lesssim \underbrace{  \Big( 1 + \mathbb{E}^{2} (t) \Big) \mathbb{E} (t) }_{\mathcal{G} ( \mathbb{E} (t) )}
	\end{aligned}
	\end{equation}
	for $0 \leq t \leq 1$. Noticing that $\mathcal{G} (w) \geq w \geq 0$ is strictly increasing on $w \geq 0$ and $\mathcal{G} (0) = 0$, we derive from the Gr\"onwall inequality that there is a constant $T = T (E^{in}, s, d, \, \textrm{all coefficients}) \in (0, 1]$ such that for all $t \in [0, T]$,
	\begin{equation*}
		\begin{aligned}
			\mathbb{E} (t)  \leq C_0 ( E^{in}, s, d, T, \, \textrm{all coefficients} ) \,,
		\end{aligned}
	\end{equation*}
	provided that $E^{in}<+\infty$, where $E^{in}$ is defined in \eqref{IC-Ener-1}. Thus, the existence result of Theorem \ref{Main-Thm-1} is proved.
	
	If $\beta_5 + \beta_6 \neq 0$ or the Condition (H) does not hold, the inequality \eqref{Ener-Loc} in Lemma \ref{priori} tells us that for some $C > 0$,
	\begin{equation}\label{Ener-2}
	\begin{aligned}
	\tfrac{d}{d t} E (t) + \tfrac{3}{2} c_0 D (t)
	\leq C E^\frac{1}{2} (t) D (t) + C \big( 1 + E^{2} (t) \big) E (t) \,,
	\end{aligned}
	\end{equation}
	where we utilize the Young's inequality. Recalling the definition $\mathbb{E} (t)$ in \eqref{E(t)-bb}, one immediately derives from \eqref{Ener-2} that
	\begin{equation}\label{Ener-3}
	\begin{aligned}
	\tfrac{d}{d t} \mathbb{E} (t) + \Big( \tfrac{1}{2} c_0 - C \mathbb{E}^\frac{1}{2} (t)  \Big) D (t) \leq C \big( 1 + \mathbb{E}^{2} (t) \big) \mathbb{E} (t) \,.
	\end{aligned}
	\end{equation}
	If we take $0 < \eps_1 \leq \tfrac{c_0^2}{8 C^2} > 0$ such that $\mathbb{E} (0) = E^{in} \leq \eps_1$, then
	\begin{equation*}
		\begin{aligned}
			C \mathbb{E}^\frac{1}{2} (0) \leq \tfrac{1}{2 \sqrt{2}} c_0 < \tfrac{1}{2} c_0  \,.
		\end{aligned}
	\end{equation*}
	We now introduce a number
	\begin{equation*}
		\begin{aligned}
			T^\star : = \sup \Big\{ \tau \in [0, 1] ; \sup_{0 \leq t \leq \tau} \mathbb{E} (t) \leq 2 \eps_1 = \tfrac{c_0^2}{4 C^2} \Big\} \in [0, 1] \,.
		\end{aligned}
	\end{equation*}
	Due to the continuity of $\mathbb{E} (t)$ on $t$, we easily conclude that $T^\star > 0$. Actually, the meaning of $T^\star$ is such that $ C \mathbb{E}^\frac{1}{2} (t) \leq \tfrac{1}{2} c_0$ holds for all $t \in [0, T^\star] \subseteq [0, 1]$. Consequently, for $ 0 \leq t \leq T^\star $, we deduce from the inequality \eqref{Ener-3} that
	\begin{equation}\label{Et}
	\begin{aligned}
	\tfrac{d}{d t} \mathbb{E} (t) \leq C \big( 1 + \mathbb{E}^{2} (t) \big) \mathbb{E} (t) \leq 2 C \eps_1 ( 1 + 4 \eps_1^2 )\,.
	\end{aligned}
	\end{equation}
	We now take
	\begin{equation}
	\begin{aligned}
	T = \min \big\{ 1, \tfrac{1}{4 C ( 1 + 4 \eps_1^2 )} \big\} \in (0, 1] \,.
	\end{aligned}
	\end{equation}
	We claim that $T^\star \geq T$. Otherwise, if $0 < T^\star < T$, it follows from integrating the inequality \eqref{Et} over $[0, t] \subseteq [0, T^*]$ that
	\begin{equation}
	\begin{aligned}
	\mathbb{E} (t) \leq & \mathbb{E} (0) + 2 C \eps_1 ( 1 + 4 \eps_1^2 ) t \leq \eps_1 + 2 C \eps_1 ( 1 + 4 \eps_1^2 ) T \\
	\leq & \eps_1 + 2 C \eps_1 ( 1 + 4 \eps_1^2 ) \cdot \tfrac{1}{4 C ( 1 + 4 \eps_1^2 )} = \eps_1 + \tfrac{1}{2} \eps_1 = \tfrac{3}{2} \eps_1 < 2 \eps_1
	\end{aligned}
	\end{equation}
	for all $t \in [0, T^\star]$. Namely,
	$$ \sup_{0 \leq t \leq T^\star} \mathbb{E} (t) \leq \tfrac{3}{2} \eps_1 < 2 \eps_1 \,. $$
	Then the continuity of $\mathbb{E} (t)$ implies that there is a $\varsigma > 0$ such that
	$$ \sup_{0 \leq t \leq T^\star + \varsigma} \mathbb{E} (t) \leq 2 \eps_1 \,, $$
	which contradicts to the definition of $T^\star$. Consequently, we know that $T^\star \geq T > 0$, and
	\begin{equation*}
		\begin{aligned}
			\mathbb{E} (t) \leq C_0 : = \eps_1 + 2 C \eps_1 ( 1 + 4 \eps_1^2 ) T
		\end{aligned}
	\end{equation*}
	holds for all $t \in [0, T]$. Then the existence proof of Theorem \ref{Main-Thm-1-prime} is thereby completed.
	
	Regarding to the uniqueness of the classical solution, considering two solutions with the same initial data, then writing down the equation satisfied by their difference, with the energy bound in hand, the method to prove the difference is actually zero is rather standard. We omit the details here. We infer the readers to see \cite{DeAnna-Zarnescu-JDE-2018}, where they proved the uniqueness of the solutions to the incompressible inertial Qian-Sheng model.
	
	At the end of this section, we show that $ Q \in S^{(d)}_{0}$, i.e., $ Q^{\top} =Q $ and $ \tr(Q)=0$. It is obvious that if $ Q $ is a solution of the system \eqref{CIQS}-\eqref{initial date}, then so is $Q^{\top}$. Here we utilize the initial conditions $Q^{in}, \tilde{Q}^{in} \in S_0^{(d)}$. Then by the uniqueness of the solution, we know that $ Q^{\top} =Q $. It remains to prove $\tr(Q)=0$. We take the trace on the both side of the third equation of \eqref{CIQS}, and obtain
	\begin{equation}\label{tr}
	\begin{aligned}
	J \partial_{t} \tr(\dot{Q}) + J u \cdot \nabla \tr(\dot{Q}) =  -\mu_{1} \tr(\dot{Q})
	+ L \Delta \tr(Q) - a \tr(Q) - c \tr(Q) | Q |^{2}  \,,
	\end{aligned}
	\end{equation}
	where we used the fact that $ Q^{\top} = Q $, $ \Omega^{\top} = -\Omega $, and $ \tr( \Omega Q) = \tr(Q \Omega)$.
	Then we multiply \eqref{tr} by
	$ \tr( \dot{Q} ) $ and integrate by parts over $\mathbb{R}^d$. We therefore obtain
	\begin{equation}\nonumber
	\begin{aligned}
	& \tfrac{1}{2} \tfrac{d}{dt} \big(J \| \tr(\dot{Q}) \|^{2}_{L^2} + L \| \nabla \tr(Q) \|^{2}_{L^2} + a \| \tr(Q) \|^{2}_{L^2} \big) + \mu_{1} \| \tr(\dot{Q}) \|^{2}_{L^2} \\
	\lesssim & L \| \nabla \tr(Q) \|^{2}_{L^{2}} + a \| \nabla \tr(Q) \|_{L^{2}} \| \tr(Q) \|_{L^{2}} + \| Q \|^{2}_{L^{6}} \| \tr(Q) \|_{L^{6}} \| \tr(\dot{Q}) \|_{L^{2}} \\
	\lesssim & L \| \nabla \tr(Q) \|^{2}_{L^{2}} + a \| \nabla \tr(Q) \|_{L^{2}} \| \tr(Q) \|_{L^{2}} + \| \nabla \tr(Q) \|_{L^{2}} \| \tr(\dot{Q}) \|_{L^{2}} \\
	\lesssim & \tfrac{\mu_{1}}{2} \| \tr(\dot{Q}) \|^{2}_{L^{2}} + C( L \| \nabla \tr(Q) \|^{2}_{L^{2}} + a \| \tr(Q) \|^{2}_{L^{2}}) \,,
	\end{aligned}
	\end{equation}
	where $ C > 0 $ is a constant. From applying the Gr\"onwall inequality and using the fact that $ \tr(\dot{Q}) (0) = \tr(\tilde{Q}^{in}) = 0 $, $
	\tr(Q)(0) = \tr(Q^{in}) = 0 $, we derive that
	\begin{equation}\nonumber
	\begin{aligned}
	& 0 \leq ( J\| \tr(\dot{Q}) \|^{2}_{L^{2}} + L \| \nabla \tr(Q) \|^{2}_{L^{2}} + a \| \tr(Q) \|^{2}_{L^{2}} ) (t) \\
	\lesssim & (J \| \tr(\dot{Q}) \|^{2}_{L^{2}} + L \| \nabla \tr(Q) \|^{2}_{L^{2}} + a \| \tr(Q) \|^{2}_{L^{2}})(0) \exp ( C t ) = 0
	\end{aligned}
	\end{equation}
	for all $0 \leq  t \leq T $. Consequently, $ \tr(Q) = 0$ holds for all $ 0 \leq t \leq T $. Consequently, the proofs of Theorem \ref{Main-Thm-1} and Theorem \ref{Main-Thm-1-prime} are finished.
\end{proof}

\section{Global existence of classical solution}\label{Sec: Global}

In this section, we will prove the global classical solution to the system \eqref{CIQS}-\eqref{initial date}.
We note that the $ H^{s} $-estiamte \eqref{H-L} does not have enough dissipation to prove the global solution with small initial data. To overcome this difficulty, we shall seek some dissipation of the $ Q $.

For some small $ \eta >0$ to be determined, we introduce the following { \em instant energy functional} $\mathcal{E}_\eta (t)$ :
\begin{equation}\label{Ge}
\begin{aligned}
\mathcal{E}_\eta (t) =
\|u\|^2_{H^s}+J( 1 - \eta ) \| \dot{Q} \|^{2}_{H^{s}}+L\|\nabla Q\|^2_{H^s} \\
+(a-J\eta) \|Q\|^2_{H^s} +J\eta\| \dot{Q} + Q \|^{2}_{H^{s}} \,.
\end{aligned}
\end{equation}
If $\eta > 0$ is small enough, such as
\begin{equation}\label{eta12-bnd}
\begin{aligned}
0 < \eta \leq \tfrac{1}{2} \min \{ 1, \tfrac{a}{J}, \tfrac{\Xi}{C} \} \,,
\end{aligned}
\end{equation}
then there are two constants $ C_1, C_2 > 0 $ such that
\begin{equation}\label{EE-glb}
\begin{aligned}
C_1 E (t) \leq \mathcal{E}_\eta (t) \leq C_2 E (t) \,,
\end{aligned}
\end{equation}
where the energy functional $ E(t) $ is defined in \eqref{Ener}.
We further introduce the energy dissipative rate $ \mathscr{D} (t) $:
\begin{equation}\label{D-global}
\begin{aligned}
\mathscr{D} (t) = \| \nabla u \|^2_{H^s} + \| \dot{Q} \|^2_{H^s} + \| \nabla Q \|^2_{H^s} + \| Q \|^2_{H^s} \,.
\end{aligned}
\end{equation}

We then give the following lemma to construct the global solution to \eqref{CIQS}, of which the main aim is to seek a further dissipative structures on the $Q$.

\begin{lemma}\label{Global lemma}
	Let $(u,Q)$ be a local solution constructed in Theorem \ref{Main-Thm-1} or Theorem \ref{Main-Thm-1-prime}. If the coefficients relations are given in Theorem \ref{Main-Thm-1} and Theorem \ref{Main-Thm-1-prime} EXCEPT FOR the case $(\tilde{\mu}_2 - \mu_2)^2 \geq 8 \beta_4 \mu_1$, then there are positive constants  $ \eta_{0}, \Lambda_1 , C > 0 $, depending only on the all coefficients, $s$ and $d$, such that for all $0 < \eta \leq \eta_0$,
	\begin{equation}\label{Ener-glb}
	\begin{aligned}
	\tfrac{1}{2} \tfrac{d}{d t} \mathcal{E}_\eta (t) + \Lambda_1 \mathscr{D} (t) \leq C \big( 1 + \mathcal{E}_\eta^\frac{1}{2} (t) \big) \mathcal{E}_\eta^\frac{1}{2} (t) \mathscr{D} (t) \,.
	\end{aligned}
	\end{equation}
\end{lemma}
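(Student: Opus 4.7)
\medskip

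\noindent\textbf{Proof proposal.}
The plan is to upgrade the a priori estimate of Lemma \ref{priori}, which only delivers dissipation in $\|\nabla u\|_{H^s}^2$ and $\|\dot Q\|_{H^s}^2$, by extracting the missing dissipative control of $\|\nabla Q\|_{H^s}^2 + \|Q\|_{H^s}^2$ demanded by $\mathscr D(t)$. The design of $\mathcal{E}_\eta$ already points to the mechanism: expanding $J\eta\|\dot Q+Q\|_{H^s}^2$ and canceling the $(a-J\eta)$ and $J(1-\eta)$ factors gives the clean identity
\begin{equation*}
\mathcal{E}_\eta(t)=E(t)+2J\eta\sum_{|k|\le s}\langle\partial^k\dot Q,\partial^k Q\rangle ,
\end{equation*}
so that controlling $\tfrac{d}{dt}\mathcal{E}_\eta$ amounts to controlling $\tfrac{d}{dt}E$ (already done in Lemma \ref{priori}) together with the time derivative of the cross inner products $\langle\partial^k\dot Q,\partial^k Q\rangle$. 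The smallness of $\eta$ as in \eqref{eta12-bnd} also guarantees \eqref{EE-glb}, so $\mathcal{E}_\eta$ is bona fide equivalent to $E$.

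Concretely, for each $|k|\le s$ I would apply $\partial^k$ to the third equation of \eqref{CIQS}, test it against $\partial^k Q$, and integrate by parts. Using $\operatorname{div} u=0$ together with the material derivative identity $\partial^k\ddot Q=\partial_t\partial^k\dot Q+\partial^k(u\cdot\nabla\dot Q)$, the term $J\int\partial^k\ddot Q\cdot\partial^k Q\,dx$ is rewritten as $J\tfrac{d}{dt}\langle\partial^k\dot Q,\partial^k Q\rangle-J\|\partial^k\dot Q\|_{L^2}^2$ up to transport commutators, and the $\mu_1\int\partial^k\dot Q\cdot\partial^k Q$ term is rewritten as $\tfrac{\mu_1}{2}\tfrac{d}{dt}\|\partial^k Q\|_{L^2}^2$ up to a similar commutator. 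On the right, $L\int\partial^k\Delta Q\cdot\partial^k Q=-L\|\nabla\partial^k Q\|_{L^2}^2$ and $-a\|\partial^k Q\|_{L^2}^2$ are precisely the new dissipative terms we need. Multiplying this identity by $\eta$, summing over $|k|\le s$, and adding the $H^s$-estimate from Lemma \ref{priori} produces
\begin{equation*}
\tfrac{1}{2}\tfrac{d}{dt}\mathcal{E}_\eta(t)+c_0 D(t)+\eta L\|\nabla Q\|_{H^s}^2+\eta a\|Q\|_{H^s}^2\le\eta J\|\dot Q\|_{H^s}^2+(\text{nonlinear and cross terms}).
\end{equation*}
The spurious $\eta J\|\dot Q\|_{H^s}^2$ is absorbed into the $c_0\|\dot Q\|_{H^s}^2$ part of $c_0 D(t)$ provided $\eta_0\le c_0/(2J)$; a similar smallness threshold handles the commutator-type $\eta\,\tfrac{\mu_1}{2}\int[\partial^k,u\cdot\nabla]Q\cdot\partial^k Q$ contributions after bounding them by $\mathcal{E}_\eta^{1/2}\mathscr D$ via Lemma \ref{Lemma3}. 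The combination of the resulting dissipation is exactly $\Lambda_1\mathscr D(t)$ for some $\Lambda_1>0$.

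Under the hypothesis that excludes only $(\tilde\mu_2-\mu_2)^2\ge 8\beta_4\mu_1$, the entropy inequality holds, so the quadratic form $F$ in \eqref{Condition-H} provides the coercive lower bound $\delta_0\tfrac{\beta_4}{2}\|\nabla\partial^k u\|_{L^2}^2+\delta_1\mu_1\|\partial^k\dot Q\|_{L^2}^2$, as in \eqref{Entropy-Coeffs-2}. If Condition (H) additionally holds, no extra work is needed; if it fails, the only uncontrolled piece is $-\mu_2\langle\nabla\partial^k u,[\partial^k\Omega,Q]\rangle$, which is estimated via Sobolev embedding by $\|Q\|_{H^s}\|\nabla u\|_{H^s}^2\lesssim\mathcal{E}_\eta^{1/2}\mathscr D$, fitting the right-hand side of \eqref{Ener-glb}. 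The same bound absorbs $(\beta_5+\beta_6)\langle\partial^k AQ,\partial^k A\rangle$ when $\beta_5+\beta_6\ne 0$. All remaining nonlinear contributions (the $b$ and $c$ bulk terms, the $\tilde\mu_2 A$ cross term, the $[Q,\dot Q]$ and $[Q,[\Omega,Q]]$ triple products, and the commutators $[\partial^k,u\cdot\nabla]$) are handled exactly as in the proof of Lemma \ref{priori} by Moser-type inequalities, giving factors of the form $(1+\mathcal{E}_\eta^{1/2})\mathcal{E}_\eta^{1/2}\mathscr D$.

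The principal obstacle is bookkeeping: the new test function $\partial^k Q$ is one derivative rougher than the natural test $\partial^k\dot Q$ used in Lemma \ref{priori}, so several triple products (notably $b\int\partial^k(Q^2-\tfrac{1}{d}|Q|^2 I_d)\cdot\partial^k Q$ and $c\int\partial^k(Q|Q|^2)\cdot\partial^k Q$) must be bounded purely by the new $\eta(L\|\nabla Q\|_{H^s}^2+a\|Q\|_{H^s}^2)$ dissipation with a prefactor proportional to $\mathcal{E}_\eta^{1/2}$; this forces careful use of the Moser estimate so as not to waste any of the $\|Q\|_{H^s}$ factor. Similarly, the $\tfrac{\tilde\mu_2}{2}\langle\partial^k A,\partial^k Q\rangle$ coupling must be split off as $\mathcal{E}_\eta^{1/2}\mathscr D^{1/2}\cdot\mathscr D^{1/2}$ via Young's inequality so that it fits cleanly on the right. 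Once all of these terms are controlled with the advertised smallness prefactor, fixing $\eta_0$ small enough so that the coefficient of each dissipative term in $\mathscr D(t)$ is strictly positive yields exactly \eqref{Ener-glb}.
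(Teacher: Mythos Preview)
Your strategy---test the $Q$-equation against $\partial^k Q$, multiply by $\eta$, and add to the estimate \eqref{Sum-1}---is exactly the paper's route, and your treatment of the $F$-term under the entropy inequality is correct. There are, however, two places where the bookkeeping of the \emph{linear} cross terms goes wrong.

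First, if you rewrite $\mu_1\langle\partial^k\dot Q,\partial^k Q\rangle$ as $\tfrac{\mu_1}{2}\tfrac{d}{dt}\|\partial^k Q\|_{L^2}^2$ plus a commutator, then after multiplying by $\eta$ and adding to \eqref{Sum-1} you obtain $\tfrac{1}{2}\tfrac{d}{dt}\big(\mathcal{E}_\eta(t)+\eta\mu_1\|Q\|_{H^s}^2\big)$, not $\tfrac{1}{2}\tfrac{d}{dt}\mathcal{E}_\eta(t)$ as you claim. The paper instead leaves $-\mu_1\langle\partial^k\dot Q,\partial^k Q\rangle$ as a right-hand side term (their $\tilde R_2$) and bounds it by Young's inequality as $\tfrac{1}{4K_\#}a\|Q\|_{H^s}^2+C\|\dot Q\|_{H^s}^2$; the first piece is absorbed into the new $\eta a\|Q\|_{H^s}^2$ dissipation and the second, after multiplication by $\eta$, into $c_0 D(t)$.

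Second, and more seriously, the coupling $\tfrac{\tilde\mu_2}{2}\langle\partial^k A,\partial^k Q\rangle$ cannot be bounded by $\mathcal{E}_\eta^{1/2}\mathscr{D}$ as you suggest: it is a \emph{linear} term with constant coefficient $\tilde\mu_2$, so no factor of the solution size appears. Placing it on the right of \eqref{Ener-glb} with a spurious $\mathcal{E}_\eta^{1/2}$ prefactor would be incorrect. The paper (their $\tilde R_5$) handles it exactly like $\tilde R_2$: by Young's inequality $\tfrac{1}{4K_\#}a\|Q\|_{H^s}^2+C\|\nabla u\|_{H^s}^2$, with the second piece absorbed into $c_0 D(t)$ once multiplied by $\eta$. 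This absorption is in fact the principal mechanism that forces $\eta_0$ to be small (alongside the $\eta J\|\dot Q\|_{H^s}^2$ term you already identified); without it you cannot close the estimate.
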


\begin{remark}
	The constant $\eta_0$ should satisfy the bound \eqref{eta12-bnd}, namely, $ 0 < \eta_0 \leq \tfrac{1}{2}\min \{ 1, \tfrac{a}{J}, \tfrac{\Xi}{C} \}$, so that the instant energy functional $\mathcal{E}_\eta (t)$ is equivalent to $E(t)$, i.e., \eqref{EE-glb} holds. Furthermore, if $(\tilde{\mu}_2 - \mu_2)^2 \geq 8 \beta_4 \mu_1 > 0$, regardless whether $ \beta_5 + \beta_6 = 0 $ is true or not, the quantity $ - \tfrac{1}{2} (\tilde{\mu}_2 - \mu_2) \sum_{|k| = 0}^s \langle \nabla \partial^k u , \partial^k \dot{Q} \rangle $ in the term $ \sum_{|k|=0}^s \int_{\R^d} F ( \nabla \partial^k u , \partial^k \dot{Q} , $ $[ \partial^k \Omega , Q ] ) d x $ can not be absorbed by the energy $E(t)$ and energy dissipative rate $\mathscr{D} (t)$. So, we fail to globally extend the small data local solution in this case constructed in Theorem \ref{Main-Thm-1-prime}.
\end{remark}

\begin{proof}
	For all $ | k | \leq s$, acting $\partial^{k}$
	on the third equation of the system \eqref{CIQS} and taking $L^{2}$-inner product with $ \partial^{k} Q $ yield that
	\begin{equation}\label{Global Q}
	\begin{aligned}
	& \tfrac{1}{2} \tfrac{d}{dt} \big( J\| \partial^{k} \dot{Q} + \partial^{k} Q \|^{2}_{L^{2}}
	- J\| \partial^{k}\dot{Q} \|^{2}_{L^{2}}
	- J\| \partial^{k} Q \|^{2}_{L^{2}} \big)
	+ L \| \nabla \partial^{k} Q\|^{2}_{L^{2}}\\
	&
	+ a \| \partial^{k} Q \|^{2}_{L^{2}}
	- J \| \partial^{k} \dot{Q}\|^2_{L^{2}}\\
	=&
	\underbrace{- J \langle  \partial^{k}(u \cdot \nabla Q) , \partial^{k} \dot{Q} \rangle
		- J \langle  \partial^{k}(u \cdot \nabla \dot{Q}), \partial^{k} Q \rangle }_{\tilde{R}_{1}}
	\underbrace{- \mu_{1}\langle \partial^{k}\dot{Q}, \partial^{k} Q \rangle}_{\tilde{R}_{2}}\\
	&
	\underbrace{+ b \langle \partial^{k} (Q^{2}-\tfrac{1}{d} | Q |^{2}I_{d})), \partial^{k} Q \rangle}_{\tilde{R}_{3}}
	\underbrace{-c \langle \partial^{k} ( Q | Q |^{2})  , \partial^{k} Q \rangle}_{\tilde{R}_{4}}\\
	&
	\underbrace{+ \tfrac{\tilde{\mu}_{2}}{2}
		\langle \partial^{k} A , \partial^{k} Q  \rangle}_{\tilde{R}_{5}}
	\underbrace{+ \mu_{1} \langle \partial^{k}[ \Omega , Q ] ) , \partial^{k} Q \rangle}_{\tilde{R}_{6}} \,.
	\end{aligned}
	\end{equation}
	where we used
	\begin{equation*}
		\begin{aligned}
			\langle  \partial^{k} \ddot{Q} , \partial^{k} Q \rangle
			=
			\partial_{t} \langle  \partial^{k} \dot{Q} , \partial^{k} Q \rangle
			-\| \partial^{k} \dot{Q} \|^{2}_{L^{2}}
			+ \langle \partial^{k} (u \cdot \nabla Q) , \partial^{k} \dot{Q} \rangle
			+ \langle \partial^{k} ( u \cdot \nabla \dot{Q}), \partial^{k} Q \rangle \,.
		\end{aligned}
	\end{equation*}
	
	Now we estimate the terms $\tilde{R}_i$ $( 1 \leq i \leq 9 )$ in \eqref{Global Q}.
	
	The term $\tilde{R}_{1}$ can be easily controlled as
	\begin{equation}\label{Global Q1}
	\begin{aligned}
	\tilde{R}_{1}=&
	J\langle \nabla u \cdot\partial^{k} Q , \partial^{k} \dot{Q} \rangle
	-J\sum_{\substack{m_{1} + m_{2} = k, \\ | m_{1}| \geq 1 }}
	\langle \partial^{m_{1}} u \cdot \nabla \partial^{m_{2}} Q , \partial^{k} \dot{Q} \rangle\\
	&
	-J\langle\partial^k u \cdot \nabla\dot{Q} , \partial^{k} Q \rangle
	-J\sum_{\substack{m_{1} + m_{2} = k, \\ 1 \leq| m_{1}| \leq |k|-1 }}
	\langle \partial^{m_{1}} u \cdot \nabla \partial^{m_{2}}\dot{Q}, \partial^{k}Q  \rangle
	\\
	\lesssim&
	\| \nabla u \|_{L^{\infty}} \| \nabla \partial^{k} Q \|_{L^{2}} \| \partial^{k} \dot{Q} \|_{L^{2}}
	+\sum_{ \substack{ m_{1} + m_{2} = k,\\ | m_{1} | \geq 1 }}
	\| \partial^{m_{1}} u \|_{L^{4}} \| \nabla \partial^{m_{2}} Q \|_{L^{4}} \| \partial^{k} \dot{Q} \|_{L^{2}}\\
	&
	+\|\partial^k u\|_{L^2}\|\nabla\dot{Q}\|_{L^\infty}\| \partial^{k} Q\|_{L^2}
	+\sum_{\substack{m_{1} + m_{2} = k, \\ 1 \leq| m_{1}| \leq |k|-1 }}
	\|\partial^{m_{1}} u\|_{L^3} \|\nabla \partial^{m_{2}}\dot{Q}\|_{L^2}, \|\partial^{k}Q\|_{L^6}
	\\
	\lesssim&
	(\| u \|_{H^{s}} \| \nabla Q \|_{H^{s}} +
	\| u \|_{H^{s}} \|Q\|_{H^{s}} + \| \nabla u \|_{H^{s}} \| \nabla Q \|_{H^{s}}) \| \dot{Q} \|_{H^{s}} \lesssim E^\frac{1}{2}(t)\mathscr{D} (t) \,,
	\end{aligned}
	\end{equation}
	where the H\"older inequality and Sobolev embedding theory are utilized.
	From utilizing of the Lemma \ref{Lemma3}, Sobolev embedding theory and the fact $\div u=0$, one can deduce that
	\begin{equation}\label{Global Q2}
	\begin{aligned}
	\tilde{R}_{2} \leq C \| \dot{Q} \|_{H^{s}} \| Q \|_{H^{s}}
	\leq  \tfrac{1}{4 K_\#} a \| Q \|^2_{H^s} + C \| \dot{Q} \|^2_{H^s} \,,
	\end{aligned}
	\end{equation}
	where $K_\# > 0$ is the number of all multi-indexes $k \in \mathbb{N}^d $ such that $|k| \leq s$.
	
	From the similar arguments on the term $I_{5}$ and $I_{6}$ in Section \ref{ape}, one easily derives the bounds of the quantities $\tilde{R}_3$ and $\tilde{R}_4$ that
	\begin{equation}\label{Global Q3}
	\begin{aligned}
	\tilde{R}_{3}\lesssim \| Q \|^{3}_{H^{s}} \lesssim E^\frac{1}{2}(t)\mathscr{D} (t) \,,
	\end{aligned}
	\end{equation}
	and
	\begin{equation}\label{Global Q4}
	\begin{aligned}
	\tilde{R}_{4}\lesssim \|\nabla Q\|^3_{H^{s}}\| Q\|_{H^{s}}\lesssim E(t)\mathscr{D} (t) \,,
	\end{aligned}
	\end{equation}
	
	For the term $\tilde{R}_{5}$, it is yielded that by using the H\"older inequality and the Sobolev embedding theory
	\begin{equation}\label{Global Q5}
	\begin{aligned}
	\tilde{R}_{5}\leq
	C \|\nabla u\|_{H^{s}}\| Q\|_{H^{s}}
	\leq \tfrac{1}{4 K_\#} a \| Q \|^2_{H^s} + C \| \nabla u \|^2_{H^s}\,,
	\end{aligned}
	\end{equation}
	where $K_\# > 0$ is given in the estimate of $\tilde{R}_2$.
	
	As to the term $\tilde{R}_{6}$, the following bound can be implied by using the H\"older inequality, the Sobolev embedding theory and Moser-type inequality in Lemma \ref{Lemma3}:
	\begin{equation}\label{Global Q6}
	\begin{aligned}
	\tilde{R}_{6}\lesssim\| \nabla u \|_{H^{s}} \| Q \|^2_{H^{s}} \lesssim E^\frac{1}{2} (t) \mathscr{D} (t) \,.
	\end{aligned}
	\end{equation}
	
	From substituting the estimates \eqref{Global Q1}, \eqref{Global Q2}, \eqref{Global Q3}, \eqref{Global Q4}, \eqref{Global Q5} and \eqref{Global Q6} into \eqref{Global Q},
	and summing up for $ 0  \leq | k | \leq s$, we infer that
	\begin{equation}\label{Global Qs}
	\begin{aligned}
	\tfrac{1}{2} \tfrac{d}{dt} \big( J\| \dot{Q} &+ Q \|^{2}_{H^{s}} - J\| \dot{Q} \|^{2}_{H^{s}} - J\| Q \|^{2}_{H^{s}}\big)
	+ L \| \nabla Q \|^{2}_{H^{s} }\\
	&+ \tfrac{a}{2}\| Q \|^{2}_{H^{s}} - C \big( \| \dot{Q} \|^{2}_{H^s} +\| \nabla u \|^{2}_{H^{s}} \big) \lesssim (E^\frac{1}{2} (t) +E(t) )\mathscr{D} (t) \,.
	\end{aligned}
	\end{equation}
	
	At the end of the proof, we close the global energy estimates. From the difination of $\mathscr{D} (t)$ in \eqref{D-global}, the functional $\mathcal{A}(t)$ defined in
	\eqref{A} can be bounded by
	\begin{equation}
	\begin{aligned}
	\mathcal{A}(t) =& \|u \|^2_{\dot{H}^s} +\|\dot{Q}\|^2_{H^s}+ \| \nabla Q \|^2_{H^s} + \| Q\|^2_{H^ s}\\
	\lesssim&  \|\nabla u \|^2_{H^s} +\|\dot{Q}\|^2_{H^s}+ \| \nabla Q \|^2_{H^s} + \|Q\|^2_{H^ s} =\mathscr{D} (t)\,.
	\end{aligned}
	\end{equation}
	We choose a positive number
	\begin{equation}\label{eta0}
	\begin{aligned}
	\eta_0 = \tfrac{1}{2} \min \{ 1, \tfrac{a}{J}, \tfrac{\Xi}{C} \}\,.
	\end{aligned}
	\end{equation}
	Then, for any fixed $\eta \in (0, \eta_0]$, we multiply \eqref{Global Qs} by $\eta$, and then
	add them to the inequality \eqref{Sum-1}. We therefore have
	\begin{equation}\label{Global de}
	\begin{aligned}
	&  \tfrac{1}{2} \tfrac{d}{d t} \big( \|u\|^2_{H^s}+J( 1 - \eta ) \| \dot{Q} \|^{2}_{H^{s}}+L\|\nabla Q\|^2_{H^s} +(a-J\eta) \|Q\|^2_{H^s} +J\eta\| \dot{Q} + Q \|^{2}_{H^{s}} \big) \\
	&  + \beta_{1} \sum^{s}_{| k |=0} \|  Q : \partial^{k} A \|^{2}_{L^{2}} + \sum_{|k|=0}^s \int_{\R^d} F ( \nabla \partial^k u, \partial^k \dot{Q} , [\partial^k \Omega, Q] ) d x \\
	& + ( \beta_{5} + \beta_{6} ) \sum^{s}_{| k |=0} \langle \partial ^ k A Q , \partial ^ k A  \rangle - C \eta  \big( \| \nabla u \|^2_{H^s} + \| \dot{Q} \|^2_{H^s} \big)
	+ L\eta \| \nabla Q \|^2_{H^s} + \tfrac{a}{2}\eta\| Q \|^2_{H^s} \\
	& \lesssim \big( 1 + E^\frac{1}{2} (t) \big)E^\frac{1}{2} (t) \mathscr{D} (t) \,.
	\end{aligned}
	\end{equation}
	Moreover, from the Sobolev embedding theory, one easily derives that
	\begin{equation}\label{Glb-1}
	\begin{aligned}
	- ( \beta_{5} + \beta_{6}) \sum^{s}_{| k |=0} \langle \partial ^ k A Q , \partial ^ k A  \rangle \lesssim \| Q \|_{H^s} \| \nabla u \|^2_{H^s} \lesssim E^\frac{1}{2} (t) \mathscr{D} (t) \,.
	\end{aligned}
	\end{equation}
	
	If $\tilde{\mu}_2 \neq \mu_2$ and the Condition (H), defined in \eqref{Condition-H}, holds, then
	\begin{equation}\label{Glb-2}
	\begin{aligned}
	\sum_{|k|=0}^s \int_{\R^d} F ( \nabla \partial^k u , \partial^k \dot{Q} , [ \partial^k \Omega , Q ] ) d x \geq \delta_0 \tfrac{1}{2} \beta_1 \| \nabla u \|^2_{H^s} + \delta_1 \mu_1 \| \dot{Q} \|^2_{H^s} \\
	\geq \Xi_1 ( \| \nabla u \|^2_{H^s} + \| \dot{Q} \|^2_{H^s} ) \,,
	\end{aligned}
	\end{equation}
	where $\delta_0, \delta_1 \in (0, 1]$ are given in \eqref{Condition-H} and $\Xi_1 = \min \{ \tfrac{1}{2} \delta_0 \beta_4 , \delta_1 \mu_1 \} > 0$.
	
	If $\tilde{\mu}_2 = \mu_2$, regardless of whether the Condition (H) and the entropy inequalities \eqref{Entropy-Coeffs-1}-\eqref{H} are true or not, we have
	\begin{equation}\label{Glb-3}
	\begin{aligned}
	& \sum_{|k|=0}^s \int_{\R^d} F ( \nabla \partial^k u , \partial^k \dot{Q} , [ \partial^k \Omega , Q ] ) d x \\
	= & \tfrac{1}{2} \beta_4 \| \nabla u \|^2_{H^s} + \mu_1 \| \dot{Q} \|^2_{H^s} + \mu_1 \sum_{|k|=0}^s \| [\partial^k \Omega , Q] \|^2_{L^2} - \mu_2 \sum_{|k|=0}^s \langle \partial^k A , [\partial^k \Omega , Q] \rangle \\
	\geq & \Xi_2 ( \| \nabla u \|^2_{H^s} + \| \dot{Q} \|^2_{H^s} ) - C \| \nabla u \|^2_{H^s} \| Q \|_{H^s} \\
	\geq & \Xi_2 ( \| \nabla u \|^2_{H^s} + \| \dot{Q} \|^2_{H^s} ) - CE^\frac{1}{2} (t) \mathscr{D} (t)
	\end{aligned}
	\end{equation}
	for some constant $C > 0$, where $\Xi_2 = \min \{ \tfrac{1}{2} \beta_4, \mu_1 \} > 0$ and the first inequality is derived from the Sobolev embedding theory. Let $\Xi = \min \{ \Xi_1, \Xi_2 \} > 0$.
	
	If $\tilde{\mu}_2 \neq \mu_2$ and the entropy inequality \eqref{H} holds, then there are $\delta_0 , \delta_1 \in (0, 1)$ such that
	\begin{equation}\label{Glb-4}
	\begin{aligned}
	& \sum_{|k|=0}^s \int_{\R^d} F ( \nabla \partial^k u , \partial^k \dot{Q} , [ \partial^k \Omega , Q ] ) d x \\
	= & \tfrac{1}{2} \beta_4 \| \nabla u \|^2_{H^s} + \mu_1 \| \dot{Q} \|^2_{H^s} + \mu_1 \sum_{|k|=0}^s \| [\partial^k \Omega , Q] \|^2_{L^2} \\
	& - \tfrac{1}{2} (\tilde{\mu}_2 - \mu_2) \langle \partial^k A, \partial^k \dot{Q} \rangle - \mu_2 \sum_{|k|=0}^s \langle \partial^k A , [\partial^k \Omega , Q] \rangle \\
	\geq & \delta_0 \tfrac{1}{2} \beta_4 \| \nabla u \|^2_{H^s} + \delta_1 \mu_1 \| \dot{Q} \|^2_{H^s} - C \| \nabla u \|^2_{H^s} \big( \| Q \|_{H^s} + \| Q \|^2_{H^s} \big) \\
	\geq & \Xi_3 ( \| \nabla u \|^2_{H^s} + \| \dot{Q} \|^2_{H^s} ) - C \big( 1+E^\frac{1}{2} (t) \big) E^\frac{1}{2} (t) \mathscr{D} (t)
	\end{aligned}
	\end{equation}
	for some constant $C > 0$, where $\Xi_3 = \min \{ \delta_0 \frac{1}{2} \beta_4 , \delta_1 \mu_1 \} > 0$.
	
	If $\tilde{\mu}_2 \neq \mu_2$ and the entropy inequality \eqref{H} does not hold, regardless whether $ \beta_5 + \beta_6 = 0 $ is true or not, the quantity $ - \tfrac{1}{2} (\tilde{\mu}_2 - \mu_2) \sum_{|k| = 0}^s \langle \nabla \partial^k u , \partial^k \dot{Q} \rangle $ in the term $ \sum_{|k|=0}^s \int_{\R^d} F ( \nabla \partial^k u , \partial^k \dot{Q} , $ $[ \partial^k \Omega , Q ] ) d x $ can not be absorbed by the energy $E(t)$ and energy dissipative rate $\mathscr{D} (t)$. So, we fail to globally extend the small data local solution in this case constructed in Theorem \ref{Main-Thm-1-prime}.
	
	We substitute the bounds \eqref{Glb-1} and \eqref{Glb-2} (or \eqref{Glb-3}, or \eqref{Glb-4}) into \eqref{Global de} and then obtain
	\begin{equation}\label{Global-1}
	\begin{aligned}
	\tfrac{1}{2} \tfrac{d}{d t} \big( \|u\|^2_{H^s}+J( 1 - \eta ) \| \dot{Q} \|^{2}_{H^{s}}+L\|\nabla Q\|^2_{H^s} +(a-J\eta) \|Q\|^2_{H^s} +J\eta\| \dot{Q} + Q \|^{2}_{H^{s}} \big)\\
	+ (\Xi- C \eta) ( \| \nabla u \|^2_{H^s} + \| \dot{Q} \|^2_{H^s} ) + L\eta \| \nabla Q \|^2_{H^s} + \tfrac{a}{2}\eta\| Q \|^2_{H^s}\\
	\lesssim \big( 1 + E^\frac{1}{2} (t) \big) E^\frac{1}{2} (t) \mathscr{D} (t)  \,,
	\end{aligned}
	\end{equation}
	where the coefficient relation $\beta_1 \geq 0$ is also used.
	
	We remark that the choice of $\eta_0$ in \eqref{eta0} is such that the coefficients appeared in the inequality \eqref{Global de} are positive and the bound \eqref{eta12-bnd} holds, which guarantees the validity of \eqref{EE-glb}. Furthermore, one easily deduces that
	there are constants $\Lambda_1, \Lambda_2 > 0$ such that
	\begin{equation}
	\begin{aligned}
	\Lambda_1 \mathscr{D} (t) = & \Lambda_1 \big(\| \nabla u \|^2_{H^s} + \| Q \|^2_{H^s} + \| \nabla Q \|^2_{H^s} + \| \dot{Q} \|^2_{H^s} \big) \\
	\leq & (\Xi- C \eta) \big( \| \nabla u \|^2_{H^s} + \| \dot{Q} \|^2_{H^s} \big)   + L\eta \| \nabla Q \|^2_{H^s} + \tfrac{a}{2} \eta \| Q \|^2_{H^s} \\
	\leq & \Lambda_2 \big( \| \nabla u \|^2_{H^s} + \| Q \|^2_{H^s} + \| \nabla Q \|^2_{H^s} + \| \dot{Q} \|^2_{H^s} \big) = \Lambda_2 \mathscr{D} (t) \,.
	\end{aligned}
	\end{equation}
	Recalling the definition of $\mathcal{E}_\eta$ in \eqref{Ge} and the equivalent relation \eqref{EE-glb}, we can derive from the inequality \eqref{Global de} that
	\begin{equation*}
		\begin{aligned}
			\tfrac{1}{2} \tfrac{d}{d t} \mathcal{E}_\eta (t) + \Lambda_1 \mathscr{D} (t) \lesssim \big( 1 + \mathcal{E}^\frac{1}{2}_\eta (t) \big) \mathcal{E}_\eta^\frac{1}{2} (t) \mathscr{D} (t) \,.
		\end{aligned}
	\end{equation*}
	Consequently, the proof of Lemma \ref{Global lemma} is finished.
\end{proof}

\subsection*{Proof the Theorem \ref{Main-Thm-2}: Global well-posedness}
We now apply the continuity arguments to prove the existence of global-in-time solution. From the equivalent relation \eqref{EE-glb}, we easily observe that
\begin{equation}\label{EE-glb-IC}
\begin{aligned}
C_1 E(0)=C_1E^{in}\leq \mathcal{E}_\eta(0)\leq C_2 E(0) = C_2E^{in} \,,
\end{aligned}
\end{equation}
where the initial energy $E^{in}$ is given in \eqref{IC-Ener-glb}. We choose
\begin{equation}
\begin{aligned}
\eps_2 = \min \bigg\{ 1, \tfrac{\Lambda_1^2}{16 C_2 ( 1 + C_2^\frac{1}{2} )^2 C^2} \bigg\} \in (0, 1 ] \,.
\end{aligned}
\end{equation}
Combining with \eqref{EE-glb-IC}, one can easily verify that if $E^{in} \leq \eps_2$, then
\begin{equation}\label{IC-1}
\begin{aligned}
C \big( 1 + \mathcal{E}_\eta^\frac{1}{2} (0) \big) \mathcal{E}_\eta^\frac{1}{2} (0) \leq \tfrac{1}{4} \Lambda_1 < \tfrac{1}{2} \Lambda_1 \,.
\end{aligned}
\end{equation}
We introduce a number
\begin{equation}
\begin{aligned}
T^* = \sup \Big\{ \tau>0 ; \sup_{ t \in [0 , \tau] } C \big( 1 + \mathcal{E}_\eta^\frac{1}{2} (t) \big) \mathcal{E}_\eta^\frac{1}{2} (t) \leq \tfrac{1}{2} \Lambda_1 \Big\} \geq 0 \,,
\end{aligned}
\end{equation}
where the constants $\Lambda_1, C>0$ are given in Lemma \ref{Global lemma}. Then the continuity of $\mathcal{E}_\eta (t)$ on $t$ and the initial small condition \eqref{IC-1} yield that $T^* > 0$.

We claim that $T^* = + \infty$. Indeed, if $T^* < + \infty$, the global energy estimate \eqref{Ener-glb} in Lemma \ref{Global lemma} tells us that
\begin{equation}\label{Glb}
\begin{aligned}
\tfrac{d}{d t} {\mathcal{E}}_{\eta} (t) + \Lambda_1 \mathscr{D} (t) \leq 0
\end{aligned}
\end{equation}
holds for all $ t \in [ 0 , T^* ]$, which derives from integrating on $[0, t] \subseteq [ 0, T^* ]$ that
$$ {\mathcal{E}}_{\eta} (t) \leq  {\mathcal{E}}_{\eta}(0) \leq  C_2 E^{in} \leq C_2 \eps_2 $$
for all $ t \in [0,T^*] $. Then we have
\begin{equation}
\begin{aligned}
\sup_{ t \in [ 0 , T^* ] } C \big( 1 + \mathcal{E}_\eta^\frac{1}{2} (t) \big) \mathcal{E}_\eta^\frac{1}{2} (t) \leq \tfrac{1}{4} \Lambda_1 < \tfrac{1}{2} \Lambda_1 \,.
\end{aligned}
\end{equation}
By using the continuity of the energy $ \mathcal{E}_\eta (t) $ on $t$ implies that there exists a small positive $ \varsigma > 0 $ such that
\begin{equation}
\begin{aligned}
\sup_{ t \in [ 0 , T^* + \varsigma ] } C \big( 1 + \mathcal{E}_\eta^\frac{1}{2} (t) \big) \mathcal{E}_\eta^\frac{1}{2} (t) \leq \tfrac{1}{2} \Lambda_1 \,,
\end{aligned}
\end{equation}
which contradicts to the definition of $T^*$. As a consequence, $T^* = + \infty$. Then we derive from integrating the inequality \eqref{Glb} on $t \in \R^+$ that
\begin{equation}
\begin{aligned}
\sup_{t \geq 0 } \mathcal{E}_\eta (t) + \Lambda_1 \int^{\infty}_{0} \mathscr{D} (t) d t \lesssim E^{in} \,,
\end{aligned}
\end{equation}
and the proof of Theorem \ref{Main-Thm-2} is completed.\\

\noindent{\bf Proof of the Theorem \ref{Main-Thm-Decay}: Decay estimate on $\T^d$.} Let $(u, Q)$ be the solution to system \eqref{CIQS} on $(t, x) \in \R^+ \times \T^d$ constructed in Theorem \ref{Main-Thm-2}. We first take integration of the first equation of \eqref{CIQS} over $x \in \T^d$. We thereby have
\begin{equation*}
	\begin{aligned}
		\frac{d}{d t} \int_{\T^d} u (t, x) d x = 0 \,,
	\end{aligned}
\end{equation*}
which, together with the initial conditions \eqref{IC-Avar}, implies that
\begin{equation}
\begin{aligned}
\int_{\T^d} u (t, x) d x = 0 \,.
\end{aligned}
\end{equation}
Then, from the Poincar\'e inequality and the definition of $\mathscr{D} (t)$ in \eqref{D-global}, we deduces that
\begin{equation}\label{Dec-1}
\begin{aligned}
\| u \|_{L^2 (\T^d)} \lesssim \| \nabla u \|_{L^2 (\T^d)} \lesssim  \mathscr{D}^\frac{1}{2} (t) \,.
\end{aligned}
\end{equation}
Then, it is easy to be derived from \eqref{EE-glb}, \eqref{D-global} and \eqref{Dec-1} that
\begin{equation}\label{Dec-5}
\begin{aligned}
\mathcal{E}_\eta (t) \lesssim \| u \|^2_{L^2 (\T^d)} + \mathscr{D} (t) \lesssim \mathscr{D} (t) \,.
\end{aligned}
\end{equation}
Since \eqref{Glb} hold for all $t \geq 0$, we obtain that for some constant $C_3 > 0$,
\begin{equation*}
	\begin{aligned}
		\tfrac{d}{d t} \mathcal{E}_\eta (t) + C_3 \mathcal{E}_\eta (t) \leq 0 \ (\, \forall \, t \geq 0 \, ) \,,
	\end{aligned}
\end{equation*}
which implies that
\begin{equation}\label{Dec-6}
\begin{aligned}
\mathcal{E}_\eta (t) \leq \mathcal{E}_\eta (0) e^{- C_3 t} \ (\, \forall \, t \geq 0 \, ) \,.
\end{aligned}
\end{equation}
Then, the decay estimate \eqref{Decay-Bnd} in Theorem \ref{Main-Thm-Decay} is directly followed from \eqref{EE-glb}, \eqref{EE-glb-IC} and \eqref{Dec-6}. Consequently, the proof of Theorem \ref{Main-Thm-Decay} is finished.

\section{Conclusions}

In this paper, using energy method, we prove local and global well-posedness of inertial (hyperbolic) Qian-Sheng model. We introduce the coefficients constraints coming from entropy inequality and a further Condition (H). When both entropy inequality and Condition (H) are satisfied, local existence with large initial data are proved. When any one of these two conditions is broken, we can only prove local existence for small data. Furthermore, to extend the local solutions globally, we require either the entropy inequality, or $\tilde{\mu}_2 = \mu_2$.

Regarding to the future work on Qian-Sheng model, besides the open questions listed in \cite{DeAnna-Zarnescu-JDE-2018}, a new question raised by this paper is: are these conditions (entropy inequality and Condition (H)) sharp? i.e. without these conditions, can we prove ill-posedness? for example, finite time blow-up?

\section*{Acknowledgment}

The author N. J. appreciate Prof. Chun Liu for his comments on the entropy inequality and energy dissipation, which are crucial to this paper. Prof. Zarnescu also share with him on $Q$-tensor model of liquid crystals, which is very helpful on the preparation of this paper. This work is supported by the grants from the National Natural Foundation of China under contract No.11971360 and No. 11731008.

\end{document}